\documentclass{amsart}
\usepackage[arrow,matrix,curve,cmtip,ps]{xy}
\usepackage{pb-diagram}
\usepackage{pb-xy}
\usepackage{amsmath,amsthm,amssymb}
\usepackage{graphicx}
\usepackage{amssymb}
\usepackage{nicefrac}
\usepackage{enumerate}
\usepackage{float}
\usepackage{tabularx}
\usepackage{hyperref}
\usepackage{tikz}
\usetikzlibrary{patterns}
\usetikzlibrary{decorations}
\usetikzlibrary{decorations.pathreplacing}
\usetikzlibrary{topaths}
\usetikzlibrary{shapes.geometric}
\usetikzlibrary{matrix}
\usetikzlibrary{arrows.meta}
\usepackage{tikz-cd}
\usepackage{graphicx, amsmath, amssymb, amsthm, amscd}
\usepackage{color}
\usepackage{epsf}
\usepackage{enumerate}
\usepackage{mathtools}
\usepackage{caption}
\usepackage{tikz}
\usetikzlibrary{decorations.pathreplacing}
\usetikzlibrary{decorations.markings}
\newcommand{\Mod}[1]{\ (\mathrm{mod}\ #1)}

\usepackage[utf8]{inputenc} 
\usepackage[T1]{fontenc}    
\usepackage{hyperref}       
\usepackage{url}            
\usepackage{booktabs}       
\usepackage{amsfonts}       
\usepackage{nicefrac}       
\usepackage{microtype}      
\usepackage{lipsum}

\parskip = 6pt
\parindent = 0.0in

\hoffset=-.7in
\voffset=-.7in
\setlength{\textwidth}{6in}
\setlength{\textheight}{9.1in}
\setlength{\marginparwidth}{0.7in}

\newtheorem{question}{Question}
\newtheorem{theorem}{Theorem}[section]
\newtheorem{corollary}[theorem]{Corollary}
\newtheorem{lem}[theorem]{Lemma}
\newtheorem{claim}[theorem]{Claim}
\newtheorem{remark}[theorem]{Remark}

\newtheorem{definition}[theorem]{Definition}

\DeclareCaptionType{equ}[Diagram][]
 
\newcommand{\Ps}{\mathcal{P}}

 \newcommand{\N}{\mathbb{N}}

 \newcommand{\Z}{\mathbb{Z}}
 \newcommand{\R}{\mathbb{R}}

 \newcommand{\E}{\mathcal{E}}
 \newcommand{\F}{\mathcal{F}}
 \newcommand{\htop}{h_{\text{top}}}

 \newcommand{\z}{\varphi}
 \newcommand{\zi}{\varphi_\infty}
 \newcommand{\rs}{\mathfrak{s}}
 
 \newcommand{\eps}{\varepsilon}
 \renewcommand{\Bbb}[1]{\mathbb{#1}}
\newcommand{\spaceX}{\mathbb{D}_\infty}
\newcommand{\mainmap}{\hat{R}_\infty}
 \newcommand{\Mm}{\mathcal{C}}
 \newcommand{\black}{\color{black}}
 
\DeclareMathOperator{\cl}{cl}
\DeclareMathOperator{\Int}{int}
\DeclareMathOperator{\id}{id}
\DeclareMathOperator{\diam}{diam}
\DeclareMathOperator{\mesh}{mesh}
\DeclareMathOperator{\supp}{supp}
\DeclareMathOperator{\Per}{Per}

\input xy

\begin{document}

\title[Beyond $0$ and $\infty$: A solution to the Barge Entropy Conjecture]{Beyond $0$ and $\infty$: A solution to the Barge Entropy Conjecture}

\author{J. P. Boro\'nski}\thanks{J. B. was supported in part by the National Science Centre, Poland (NCN), grant no. 2019/34/E/ST1/00237.}

\address[J. P. Boro\'nski]{Faculty of Mathematics and Computer Science, Jagiellonian University in Krak\'ow, ul. Łojasiewicza 6, 30-348 Kraków, Poland -- and -- National Supercomputing Centre IT4Innovations, University of Ostrava, IRAFM, 30. dubna 22, 70103 Ostrava, Czech Republic} \email{jan.boronski@uj.edu.pl}

\author{J. \v Cin\v c}\thanks{J. \v C. was supported by the Austrian Science Fund (FWF) Schrödinger Fellowship stand-alone project J 4276-N35 and Slovenian Research Agency ARIS grant J1-4632.} \address[J. \v Cin\v c]{University of Maribor, Koro\v ska 160, 2000 Maribor, Slovenia -- and -- National Supercomputing Centre IT4Innovations, University of Ostrava, IRAFM, 30. dubna 22, 70103 Ostrava, Czech Republic} \email{jernej.cinc@um.si}

\author{P. Oprocha}\thanks{P. O. was supported by
National Science Centre, Poland (NCN), grant no. 2019/35/B/ST1/02239.}
\address[P. Oprocha]{AGH University of Krakow, Faculty of Applied Mathematics,	al. Mickiewicza 30,	30-059 Krak\'ow, Poland -- and -- National Supercomputing Centre IT4Innovations, University of Ostrava,	IRAFM,	30. dubna 22, 70103 Ostrava,	Czech Republic} \email{oprocha@agh.edu.pl}

\date{\today}

\subjclass[2010]{37B40, 37B45, 37A35}
\keywords{Pseudo-arc, topological entropy, homeomorphism}
\maketitle

{\centering\footnotesize \emph{Dedicated to the memory of W\l{}odzimierz J. Charatonik (1957-2021).}\par}
\begin{abstract}
We prove the entropy conjecture of M. Barge from 1989: for every $r\in [0,\infty]$ there exists a pseudo-arc homeomorphism $h$, whose topological entropy is $r$. Until now all pseudo-arc homeomorphisms with known entropy have had entropy $0$ or $\infty$. 
\end{abstract}


\section{Introduction}
\subsection{Main result}
The present paper concerns a long-standing open problem on the dynamics and the homeomorphism group of a one-dimensional fractal-like planar object called the pseudo-arc. The pseudo-arc was originally described in 1922 by Knaster \cite{Knaster}, as the first example of a hereditarily indecomposable continuum. Recall that a {\em continuum} is a compact and connected nondegenerate metric space. {\em Indecomposability} of the pseudo-arc means that it is not the union of two distinct proper subcontinua, and {\em hereditarility} of this property means that every subcontinuum is also indecomposable. The last 100 years have seen a very intensive research on the pseudo-arc, culminating in 2016 in the spectacular achievement of classification of topologically homogeneous plane compacta by Hoehn and Oversteegen \cite{Hoehn}, in which the pseudo-arc played the central role: any such compactum is topologically a point, pseudo-arc, circle, circle of pseudo-arcs, or a Cartesian product of one of these three with either a finite set, or Cantor set. In an even more recent major breakthrough, in 2019, Hoehn and Oversteegen \cite{HoehnOversteegen19} showed that the pseudo-arc is the only, other than the arc, hereditarily equivalent\footnote{\black A continuum $K$ is said to be {\em hereditarily equivalnet}, if every proper subcontinuum $H\subset K$ is homeomorphic to $K$.} planar continuum, addressing a question of Mazurkiewicz from 1921 \cite{Mazurkiewicz}. In the present paper, we answer the following long-standing question in the affirmative.
\begin{question}{\sc (M. Barge, 1989 \cite{LewisProblems})}\label{q:1}
	Is every positive real number the entropy of some homeomorphism of the pseudo-arc?
\end{question}
\begin{theorem}\label{thm:main}
	For every $r\in [0,\infty]$ there exists a pseudo-arc homeomorphism $H_r$ such that $h_{top}(H_r)=r$. 
\end{theorem}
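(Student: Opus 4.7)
The plan is to realize $H_r$ as the shift homeomorphism $\sigma$ on a carefully designed inverse limit $\varprojlim([0,1], g_n)$ whose limit space is a pseudo-arc. By the Bowen-type formula for inverse limits of one-dimensional systems, the entropy of $\sigma$ is controlled by the entropy of the bonding maps, so the task decomposes into two simultaneous requirements: (i) the $g_n$ must be crooked enough that the inverse limit is hereditarily indecomposable (and hence, being chainable, the pseudo-arc); and (ii) the induced shift must have entropy exactly $r$.

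First I would fix for each $r\in[0,\infty]$ a model interval map $f_r\colon[0,1]\to[0,1]$ with $\htop(f_r)=r$. Piecewise linear templates suffice: a tent map of slope $e^r$ for $r\in[0,\log 2]$, multi-modal zigzag maps of an appropriate number of laps for larger finite $r$, and a map whose lap number grows along intervals accumulating at an endpoint for $r=\infty$. The plan is then to construct bonding maps $g_n$ which are $\eps_n$-crooked in the Minc--Transue sense, with $\eps_n\to 0$, while each $g_n$ remains semiconjugate to $f_r$ via a factor map whose fibers are combinatorially trivial (e.g.\ finite, or intervals mapped with uniformly bounded symbolic complexity). The crookedness drives the inverse limit to the pseudo-arc, while the semiconjugacy locks the shift entropy to $\htop(f_r)=r$.

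The technical heart of the argument is the construction of these "crooked but dynamically faithful" bonding maps: one must insert the crookedness on finer and finer spatial scales without producing new symbolic complexity beyond that of $f_r$. A natural device is to nest the crooked oscillations inside shrinking tubular neighborhoods of pieces of orbits of $f_r$, using a scale parameter that decays fast enough to guarantee hereditary indecomposability in the inverse limit, yet is organized around a chosen Markov partition of $f_r$ so that no new periodic orbits are created. The upper bound $\htop(\sigma)\le r$ should then follow from the semiconjugacy plus a zero-entropy-fiber estimate, and the lower bound from embedding an equivalent symbolic model of $f_r$ into the shift.

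The main obstacle is reconciling (i) and (ii). Crooking to produce hereditary indecomposability is a classical technique, and building interval maps of any prescribed entropy is standard, but keeping these two operations compatible is precisely what has prevented progress: all previously known bonding-map constructions that yield the pseudo-arc (Henderson, Minc--Transue, Lewis--Mouron, and their descendants) produce shift entropy equal to $0$, $\log 2$, or $\infty$. I expect the new ingredient to be a general gluing/lifting scheme that inserts the $\eps_n$-crookedness without disturbing a preselected Markov structure, together with a careful accounting showing that each additional $\eps_n$-zigzag contributes at most an $o(1)$ amount of symbolic complexity to the shift. Once both the pseudo-arc conclusion and the exact entropy value are proved for every $r$, setting $H_r:=\sigma$ on the resulting inverse limit completes the proof.
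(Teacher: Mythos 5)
Your proposed route is precisely the one that the known obstruction theorems rule out, and the paper explicitly identifies this as the reason a new approach was required. You propose to realize $H_r$ as the shift homeomorphism on $\varprojlim([0,1],g_n)$ for crooked bonding maps; but first note that if the $g_n$ genuinely vary with $n$, there is no shift self-homeomorphism of the inverse limit (the shift maps $\varprojlim([0,1],(g_n)_{n\ge 1})$ to $\varprojlim([0,1],(g_n)_{n\ge 2})$, which are in general distinct spaces), so the proposal must really mean a single bonding map $g$ with $\varprojlim([0,1],g)$ equal to the pseudo-arc. In that case Mouron's theorem (\cite{Mouron}, quoted in the introduction of the paper) says that the shift on any such inverse limit has topological entropy $0$ or $\infty$; there is no middle value. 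Moreover Mouron's corollary says the same for \emph{any} pseudo-arc homeomorphism that is merely semi-conjugate to an interval map, which is exactly the structure you impose via your factor maps onto $f_r$. So the semiconjugacy to $f_r$ that you use to ``lock the shift entropy to $\htop(f_r)=r$'' in fact forces the entropy to $0$ or $\infty$, not $r$. Your intuition that ``all previously known bonding-map constructions produce shift entropy $0$, $\log 2$, or $\infty$'' is close but misses the crucial point: Block--Keesling--Uspenskij and Mouron show this is not an accident of the known examples but a theorem about the class of maps you are trying to build.

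The paper's actual proof starts from a completely different place. It constructs a pseudo-arc homeomorphism $\hat R_\infty$ with a unique fixed point and odometer dynamics on every other orbit closure (Theorem~\ref{odo}), realized as a shift on an inverse limit of pseudo-arcs linked by branched $2$-to-$1$ coverings, not intervals. This homeomorphism has zero entropy and is \emph{not} semi-conjugate to an interval map in any useful way. The entropy is then injected not by choosing a bonding map of entropy $r$ but by a Denjoy--Rees perturbation (after Béguin--Crovisier--Le~Roux \cite{Cro}): one blows up the orbit of a positive-measure Cantor set $K$ in the odometer and replaces it by $K\times C$, where $(C,\sigma)$ is a strictly ergodic minimal subshift of entropy $r$ (supplied by Jewett--Krieger), carrying along the whole product structure through a sequence of carefully localized homeomorphisms $h_n,H_n$ of a four-dimensional ambient ``solenoidal cube''. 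The conditions $\mathbf{A}$, $\mathbf{B}$, $\mathbf{C}$ control the limiting semiconjugacy $\psi$ so that every invariant measure of the perturbed map $G$ is either a lift of an invariant measure of $\hat R_\infty$ (hence zero entropy) or the unique measure of $\phi_{\mathbb{P}}\times\sigma$ (entropy $r$), and the variational principle gives $\htop(G)=r$. Crookedness is still used, but to show the perturbed inverse limit stays a pseudo-arc ($\mathbf{B_7}$ and Lemma~\ref{lem:arc-like}), not to govern entropy as in your plan. In short: the gap is not in the bookkeeping of your crooking scheme but in the premise that a shift on an interval inverse limit, or anything semi-conjugate to an interval map, can have finite positive entropy on the pseudo-arc; that is precisely what is impossible.
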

Our proof of Theorem \ref{thm:main} is obtained by a combination of a variant of Mary Rees' technique, developed recently by Beguin, Crovisier and Le Roux \cite{Cro}, with several inverse limit techniques developed for the purpose of Theorem \ref{thm:main}. The homeomorphisms $H_r$ are obtained as perturbations of a pseudo-arc homeomorphism $\mainmap$ that exhibits Cantor fan-like dynamics; i.e. $\mainmap$ possesses a unique fixed point and every other point lies in a Cantor set that is the closure of the orbit of that point with odometer acting on it. In fact we obtain even stronger result than Theorem~\ref{thm:main}, since we adapt the approach from \cite{Cro} which is very flexible. It allows to ``replace'' the Haar measure on the odometer by a joining of Haar measure with any other probability measure. Making such replacements recursively, we can introduce any countable sequence of invariant measures in the pseudo-arc, provided that all of them are extensions of an odometer. So in fact, we can get much richer structure of invariant measures than what is sufficient for solving Question~\ref{q:1}
(e.g. we can obtain an example of a pseudo-arc homeomorphism without a measure of maximal entropy).

In turn, we also answer three other open questions from around the same time. 
\begin{question}{\sc (M. Barge, 1989 \cite{LewisProblems})}\label{q:2}
	Does every homeomorphism of the pseudo-arc of positive entropy have periodic points of infinitely many different periods?
\end{question}
\vspace{-0.3cm}
{\bfseries Answer 2} {\itshape No, a unique fixed point may be the only periodic point.\footnote{This is optimal, since the pseudo-arc has the fixed point property \cite{Hamilton}.}}
\begin{question}{\sc (W. Lewis, 1989 \cite{LewisSurvey})}\label{q:3}
	What periodic structure does positive entropy of the pseudo-arc homeomorphism $f:P\to P$ imply?
\end{question}
\vspace{-0.3cm}
{\bfseries Answer 3} {\itshape None.}
\begin{question}{\sc (W. Lewis, 1989 \cite{LewisSurvey})}\label{q:4}
	If the homeomorphism $f: P \to P$ of the pseudo-arc $P$ has positive entropy, does $f$ have homoclinic orbits?
\end{question}
\vspace{-0.3cm}
{\bfseries Answer 4} {\itshape Not necessarily.} 

This in particular means that, in contrast to the interval dynamics, positive topological entropy on the pseudo-arc does not induce any standard horseshoe-like dynamical behavior. In fact, it turned out that dynamics of maps of the pseudo-arc with positive entropy can be much richer that expected in early intuitions, motivated by constructions of pseudo-arc homeomorphisms via inverse limits of arcs.

In the next section we provide a historic overview of the pseudo-arc and its appearance across various branches of mathematics. In Section \ref{rigflex} we discuss results from entropy theory, that provide background for Question 1. In Section \ref{leading} we review prior work on Question 1 and recall various partial results obtained until now. In Section \ref{ingredients} we describe the main ingredients of our proof of the main result, and give an outline of the present paper. 
\subsection{Pseudo-arc: historic overview, characterizations and its appearance across various branches of mathematics}
First let us recall some more historic highlights about the origin and current charactarizations of the pseudo-arc. Answering a question of Mazurkiewicz \cite{Mazurkiewicz} in the negative, whether the arc is the only continuum homeomorphic to all of its non-degenerate subcontinua, in 1948 Moise \cite{Moise1} constructed a ``new'' example of a space with the same self-similarity property, which he named a pseudo-arc. The same year Bing \cite{Bing48} constructed a ``new'' example of a homogenous indecomposable continuum. A few years later Bing \cite{Bing} published a paper where he proved that all three spaces constructed independently by Knaster, Moise and himself are homeomorphic. Bing \cite{BingPacific} showed that in the Baire category sense the pseudo-arc is a generic continuum. Namely, he showed that for any manifold  $M$ of dimension at least $2$, the set of subcontinua homeomorphic to the pseudo-arc is a dense $G_{\delta}$ subset of the hyperspace of all subcontinua of $M$. This phenomenon resembles the one of irrational numbers in $\mathbb{R}$: seemingly atypical objects in the space are in fact generic.

We say that a space is {\em arc-like} if it can be represented as an inverse limit of arcs, or equivalently if for any $\epsilon>0$ there exists a mapping from the space to the unit interval, with all fibers smaller than $\epsilon$.
In the view of the properties mentioned above, the pseudo-arc can be characterized as the unique:
\begin{itemize}
	\item homogeneous arc-like continuum (Bing \cite{Bing59}),
	\item planar homogeneous continuum different from the circle and from the circle of pseudo-arcs (Hoehn, Oversteegen \cite{Hoehn}),
	\item planar continuum homeomorphic to all of its subcontinua which is not an arc (Hoehn, Oversteegen \cite{HoehnOversteegen19}),
	\item hereditarily indecomposable arc-like continuum (Bing \cite{BingPacific}).
\end{itemize}
A closely related object is the pseudo-circle. The pseudo-circle was first described by Bing \cite{Bing} in 1951. All its proper subcontinua are pseudo-arcs, but in contrast to the pseudo-arc, the pseudo-circle is not homogenous \cite{Fearnley1970, Rogers}. It is characterized in \cite{Fearnley,RogersPhD} as the unique planar hereditarily indecomposable circle-like continuum which separates the plane into two components.

Besides being interesting from the topological perspective, the pseudo-arc and pseudo-circle appear also in other branches of mathematics. To review it chronologically, they first made their appearance in smooth dynamical systems through a construction of Handel \cite{Ha}, where he
obtained the pseudo-circle as a minimal set of a $C^{\infty}$ area-preserving diffeomorphism of the plane. This line of work was continued by Kennedy and Yorke \cite{KennedyYorke1}, who constructed a $C^{\infty}$ map on a $3$-manifold with an invariant set consisting of uncountably many pseudo-circle components. Their example is interesting also due to its stability under any small $C^1$ perturbation which suggests that it could arise naturally in physical simulations. Kennedy and Yorke pushed their result even further and provided in \cite{KennedyYorke2, KennedyYorke3} a construction of a diffeomorphism with the same properties as above on an arbitrary $7$-manifold. Recently the methods from \cite{Ha} were applied to construct various planar homeomorphisms whose stable sets consists of unions of translation pseudo-arcs \cite{RuizDelPortal}. In \cite{BO} the first and the last author constructed a homeomorphism on the $2$-torus with the pseudo-circle as a Birkhoff-like attractor. Furthermore, in this context a decomposition of the $2$-torus into pseudo-circles was given by B\'eguin, Crovisier and J\"ager \cite{BCJ}, which is invariant under a torus homeomorphism semi-conjugate to an irrational rotation. Also, Boro\'nski, Clark and Oprocha \cite{BCO} constructed a family of minimal sets of torus homeomorphisms, each of which does not admit a minimal homeomorphism on its Cartesian square. These spaces were obtained by blowing up an orbit on a Denjoy exceptional minimal set on the torus and inserting a null sequence of pseudo-arcs. \\
Furthermore, the pseudo-arc very recently appeared as a significant object in measure theory. Namely, it was proven by the second and third author in \cite{CO}, that inverse limit of a generic Lebesgue measure-preserving interval map is the pseudo-arc. The authors used this fact to construct a parameterized family of planar homeomorphisms with pseudo-arc attractors having rich boundary dynamics as well as displaying interesting measure-theoretic properties.

The pseudo-arc also surfaced in the field of complex dynamics. The origins of work in this direction come from a paper of Herman \cite{Herman}, who extended Handel's construction \cite{Ha} and provided a $C^{\infty}$-smooth diffeomorphism of the complex plane with an invariant open topological disk bounded by the pseudo-circle, on which the diffeomorphism is complex analytic and complex analytically conjugate to an irrational rotation, and on the complement the diffeomorphism is smoothly conjugate to such a rotation. Later, this result was further improved by Ch\'eritat \cite{Cheritat}, who showed that the pseudo-circle can appear as the boundary of a Siegel disk. Recently, Rempe-Gillen \cite{Rempe-Gillen} considered {\black the} pseudo-arc (and other arc-like continua) in the context of Eremenko's conjecture \cite{Eremenko}, proving that pseudo-arcs appear as one-point compactifications of connected components of Julia sets of transcendental entire functions.\\
Apart from dynamics the pseudo-arc appears also in isometric theory of Banach spaces, as a counterexample \cite{Rambla, Kawamura} to Wood's Conjecture \cite{Wood}, which asserted that there exists no non-degenerate almost transitive space. 
A closely related result was obtained by Irwin and Solecki in a seminal paper  \cite{IrwinSolecki}, where using tools from model theory they developed a dualization of the classical Fra\"{i}ss\'e limit construction, and obtained surjective universality and projective homogeneity of the pseudo-arc in the class of arc-like continua. This result gave a new characterization of the pseudo-arc and initiated a new research direction. One of the most notable results that followed is the result of Kwiatkowska \cite{Kwiatkowska}, who proved that the automorphisms group of the pseudo-arc has a residual conjugacy class. A difficult open problem that arose in this direction of research asks to describe the universal minimal flow of the homeomorphism group of the pseudo-arc, see \cite[Question 1.3]{Uspenskij} as well as \cite{BartosovaKwiatkowska}. This problem is only one of the many present in the literature that aim at improving the understanding of the homeomorphism group of the pseudo-arc. Other open questions include those asking about covering dimension of that group \cite{AvM}, or whether it contains the Erdös space\footnote{This question was raised by Logan Hoehn at Spring Topology and Dynamics Conference in 2019. The Erdös space consists of all vectors in the real Hilbert space $\ell^2$ that have only rational coordinates.}.\\
\subsection{Topological rigidity and flexibility}\label{rigflex}
Topological entropy is a measure of complexity of maps on topological spaces and was originally defined by Adler, Konheim and McAndrew \cite{AKMc} as a topological analogue of metric entropy from ergodic theory. Bowen \cite{Bowen} characterized it as the exponential growth rate of the number of orbits which can be distinguished to an increasing number of iterates using measurements of fixed accuracy. Another characterization which will prove useful to us is through the variational principal, as the supremum of the measure-theoretical entropies of regular invariant probability measures \cite{Dinaburg}. The question of entropy rigidity and flexibility within a given class of maps on an underlying topological space is one of the mainstream questions in the modern theory of Dynamical Systems. For example, Herman's Positive Entropy Conjecture, recently resolved by Berger and Turaev  \cite{Berger} is one of the instances of this line of research. Shub and Williams \cite{ShubWilliams} related topological entropy of maps to the underlying space and this work was continued by a seminal paper of Manning \cite{Manning}, where connections between the structure of a compact Riemannian manifold and the topological entropy of the geodesic flow on its unit tangent bundle were given. In such a great generality let us also mention the most recent important work \cite{DeSLVY} by De Simoi, Leguil, Vinhage, and Yang, who studied in detail entropy rigidity and flexibility for Anosov flows on a compact $3$-manifold preserving a smooth volume.\\
All the works mentioned above concentrated on more trackable systems, where foliations of the space exist at least to some extent. The topological structure of many relevant one-dimensional spaces, such as various hyperbolic attractors, including those derived from Anosov, is very different from manifolds, and that feature has proven to be a challenging aspect of questions concerning entropy of homeomorphisms of those spaces. For example, the pseudo-arc that we are dealing with in the present paper is as far from being a foliated space as possible, since all its arc-components are degenerate to a point. Perhaps a better
paradigm example of one-dimensional continua to start with in this aspect are \emph{solenoids}, which are inverse limits of the unit circle with finite-sheeted covering maps, and which were used as one of the first examples of hyperbolic attractors. 
Let $\Sigma_{\alpha}$ denote the solenoid obtained as the inverse limit of covering maps of degree $\alpha=(p_1,p_2,\ldots)$, where one can assume without loss of generality that the powers $p_i$ are prime numbers (Bing \cite{Bing1960} and McCord \cite{McCord}). Each homeomorphism $h$ of a solenoid $\Sigma_{\alpha}$ has the same topological entropy as a certain automorphism from its automorphism group associated uniquely with $h$. This result was proven by Kwapisz~\cite{Kwapisz}. There is also a way to obtain planar arc-like one-dimensional continua from solenoids by taking appropriate quotient maps.
Namely, {\em generalized Knaster continua} $K_{\alpha}$ can be defined using sequences of primes $\alpha=(p_1,p_2,\ldots)$ and are isomorphic to $\Sigma_{\alpha}/\sim$ where the equivalence relation identifies points from $\Sigma_{\alpha}$ with their inverses.
 The corresponding quotient mapping is exactly two-to-one except at one or two points. These generalized Knaster continua can also be represented as inverse limits $K_{\alpha} = \underleftarrow{\lim}(I , f_{p_i} )$. Kwapisz~\cite{Kwapisz} proved that the mapping class group of $K_{\alpha}$ is isomorphic to either $\otimes^k_{n=1}\mathbb{Z}$ or $\otimes^k_{n=1}\mathbb{Z}\otimes \mathbb{Z}$, depending on whether $2$ does or does not occur infinitely often in the sequence $\alpha$. Apart from generalized Knaster continua, another well-studied family of arc-like continua is the
family of tent inverse limit spaces, that arise as attractors of some planar homeomorphisms \cite{Bruin, BdCH, BdCH2}. It was shown that in this family typical parameters give intrinsically complicated continua that are locally far from being a product of Cantor set and open arc \cite{BBD}. However, it turns out that these typical parameters do not give rise to homeomorphic spaces. In fact much more is true. A conjecture due to Ingram, which attracted much attention in the last three decades, asserted that 
inverse limit spaces of tent maps
with different slopes $s\in(\sqrt{2},2]$ are non-homeomorphic. It was proven in \cite{BBS} by Barge, Bruin and \v Stimac.
As a somewhat surprising by-product of its solution (but in agreement with the results described above), it was shown in
\cite{BS1} that the mapping class group of these spaces is $\Z$.
This in turn was used to characterize possible values of the topological entropy of homeomorphisms on these spaces \cite{BS2}. According to it, the entropy is always a non-negative integer multiple of the logarithm of the slope of the tent map.
\subsection{Work leading to the solution of Barge's entropy conjecture}\label{leading}
In this subsection we will review the important historical steps towards the answer to Question~\ref{q:1}. 
A natural approach to construct a homeomorphism on the pseudo-arc is to use the inverse limit technique using a single bonding map; such approach  provided several new interesting results in the past which we review later in this section. If one can determine the topological entropy of the bonding map, the result of Bowen \cite{Bowen1} shows that the natural extension homeomorphism on the inverse limit will attain the same value.
Henderson \cite{Henderson} gave the first example of an interval map that gives pseudo-arc in the inverse limit. His map is very simple from the dynamical perspective; all points but one repelling fixed point are attracted to the attracting fixed point and there are no other recurrent points. Therefore, the topological entropy of this map and subsequently the topological entropy of the natural extension on the pseudo-arc is $0$. Lewis \cite{Lewis} provided a way to lift dynamics from the interval to pseudo-arc; namely he showed that every interval map is semi-conjugate to a pseudo-arc homeomorphism and underlined the richness of its homeomoprhism group, however giving no additional information on topological entropies of these extensions. 
Shortly after Question~\ref{q:1} was asked, Kennedy \cite{Kennedy1989}  proved that if $C$ is a Cantor set that intersects each composant of a pseudo-arc $P$ in at most one point, then each homeomorphism of $C$ extends to a homeomorphism of $P$ onto $P$. As a consequence she obtained first known homeomorphisms of the pseudo-arc with positive topological entropy. However, she did not establish the control of the dynamics for such homeomorphisms and subsequently could not assure that the topological entropy of her constructed homeomorphisms is finite. Also, Cook \cite{Cook} has shown, that no Cantor set $C$ from her construction can intersect every composant of $P$.
Kennedy \cite{KennedyStable} also provided the first example of a transitive homeomorphism of the pseudo-arc, as an extension of the full tent map from the interval. In the same year Minc and Transue \cite{MincTransue} provided a more general construction of transitive maps on the pseudo-arc as shifts from inverse limits of intervals.
First step to show that construction through inverse limits with single bonding maps will not suffice to solve the conjecture was done by Block, Keesling and Uspenskij \cite{BlockKeeslingUspenskij} who proved that the homeomorphisms on the pseudo-arc, that are conjugate to shifts on inverse limit of intervals, have topological entropy greater than $\mathrm{log}(2)/2$, if positive. Subsequently, Mouron \cite{Mouron} proved that natural extensions on pseudo-arc through inverse limits have topological entropy either $0$ or $\infty$. The last result meant that, in order to construct finite non-zero topological entropy homeomorphism on the pseudo-arc, one needs a novel approach. As a corollary he also obtained that homeomorphisms on the pseudo-arc that are semi-conjugate to interval maps have topological entropy either $0$ or $\infty$; this in particular has a consequence that the topological entropy of the transitive examples of Kennedy, extensions by Lewis and shift homeomorphism of Minc and Transue are $\infty$. Motivated by this results and lack of other examples, Mouron asked in \cite{Mouron} if a homeomorphism of a hereditarily indecomposable continuum must have entropy $0$ or $\infty$? Addressing this question, Boro\'nski and Oprocha showed in \cite{BO2} that if $G$ is a topological graph, then provided $\underleftarrow{\lim}(G,f)$ is hereditarily indecomposable and $h_{\mathrm{top}}(f)>0$  
the entropy must be infinite. For circle maps of non-zero degree it was proven that $h_{\mathrm{top}}(f)=\infty$. In particular it means, that in contrast to pseudo-arc homeomorphisms,
zero entropy homeomorphisms of the pseudo-circle are never conjugate to shift homeomorphisms on the inverse limit of circles (in particular, this includes the diffeomorphism of Handel).
Later Boro\'nski, Clark and Oprocha \cite{BCO1} provided for any $\beta\in [0,\infty]$ a construction of a hereditarily indecomposable continuum admitting a minimal homeomorphism with topological entropy $\beta$ and thus answering Mouron's question in the negative. The present work answers Question~\ref{q:1} in full generality; as a consequence we also obtain answers on Questions~\ref{q:2}-\ref{q:4}.

\subsection{Description of the main ingredients of the proof}\label{ingredients}

Now let us address the structure of the proof of our main theorem. Our starting point are the following two theorems.

\begin{theorem}\label{Thm:LewisPeriodic}(Lewis, \cite{LewisPeriodic})
For any $n\in\mathbb{N}$ there exists an arc-like continuum $\mathcal{P}$, that admits an $n$-periodic homeomorphism $g$ with a unique fixed point, and all other points of least period $n$.
\end{theorem}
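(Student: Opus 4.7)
The plan is to realize $\mathcal{P}$ as an inverse limit $\underleftarrow{\lim}(T_n, \phi_k)$ of copies of the standard $n$-od $T_n$ (namely, $n$ arcs $A_0, \ldots, A_{n-1}$ joined at a common endpoint $c$) under bonding maps $\phi_k$ that commute with the cyclic rotation $\rho : T_n \to T_n$ cyclically permuting the arms. Setting $g = \underleftarrow{\lim}\rho$ then produces a well-defined self-homeomorphism of $\mathcal{P}$ of order dividing $n$: the relation $\phi_k\circ\rho=\rho\circ\phi_k$ guarantees that $g$ preserves the threads, and $\rho^n=\mathrm{id}$ forces $g^n=\mathrm{id}$. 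This reduces the theorem to two tasks: verifying the period structure, and ensuring that $\mathcal{P}$ is arc-like.

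The period structure is the easy half. Any fixed point of $\rho$ in $T_n$ must be $c$, so $\phi_k(c)=c$ and the thread $(c,c,\ldots)$ belongs to $\mathcal{P}$ and is fixed by $g$. Conversely, if $(x_k)_k\neq(c,c,\ldots)$, then some $x_{k_0}$ lies in the interior of an arm, and $\rho$ acts freely on $T_n\setminus\{c\}$, so $\rho^j(x_{k_0})\neq x_{k_0}$ for $0<j<n$. This immediately forces $g^j(x_k)_k\neq(x_k)_k$, and combined with $g^n=\mathrm{id}$ gives least period exactly $n$.

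The harder half is arc-likeness, because $T_n$ is not arc-like for $n\geq 3$; what we need is that the composite bondings $\phi_k\circ\phi_{k+1}\circ\cdots\circ\phi_{k+\ell}$ factor, up to uniformly small error, through arcs. I would achieve this by choosing each $\phi_k$ so that it is determined by its restriction to a fundamental domain (a single arm $A_0$ with basepoint $c$) via $\phi_k|_{A_i}=\rho^i\circ\phi_k|_{A_0}\circ\rho^{-i}$, and by arranging $\phi_k|_{A_0}$ to be a long, highly folded map that traverses \emph{every} arm of $T_n$ in a prescribed circular order before returning near $c$. Then, tracing the image of $T_n$ under the composite, one finds that the branch point is ``resolved'' by the sequence of traversals: the preimage of a small neighborhood of $c$ under a long composite becomes a union of small arcs, which can be arranged into a chain of arbitrarily small mesh. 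Formally, one follows Bing's chain-refinement technique to produce, for every $\varepsilon>0$, a chain cover of $\mathcal{P}$ of mesh less than $\varepsilon$; the $\mathbb{Z}/n$-equivariance is preserved throughout by always making symmetric choices.

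The main obstacle is precisely the tension between $\rho$-equivariance and arc-likeness: a $\rho$-equivariant map that preserves each arm yields an inverse limit retaining the $n$-od structure and so fails to be arc-like, while a map that stretches across $c$ must be controlled very carefully to ensure the chain refinements converge. The construction succeeds because once $\phi_k|_{A_0}$ is chosen to visit all arms, the branching of $T_n$ is absorbed at each stage and the limit becomes chainable. Once arc-likeness is established together with the existence of the period-$n$ homeomorphism $g$ above, the theorem follows.
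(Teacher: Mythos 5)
Your proposal follows essentially the same route as the paper's. Theorem~\ref{Thm:LewisPeriodic} is cited to Lewis, and the construction the paper gives for the stronger Theorem~\ref{odo} proceeds exactly as you describe: inverse limits of $2^n$-ods under rotation-equivariant bonding maps, with arc-likeness secured by a quantitative ``factors through arcs'' criterion (Claim~\ref{lem:arclike}) that is the precise form of your ``highly folded map traversing every arm'' heuristic, and with the periodic homeomorphism induced coordinatewise by the rotation.
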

\begin{theorem}(Lewis, \cite{LewisGroups})
For any sequence of prime numbers $\mathbb{P}$ there exists an arc-like continuum $\mathcal{P}$ that admits a $\mathbb{P}$-adic Cantor group action. 
\end{theorem}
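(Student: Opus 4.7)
The plan is to construct $\mathcal{P}$ as an inverse limit of the arc-like continua supplied by Theorem~1.2. Set $n_k=p_1p_2\cdots p_k$, so that the $\mathbb{P}$-adic Cantor group is $\mathbb{Z}_{\mathbb{P}}=\underleftarrow{\lim}(\mathbb{Z}/n_k\mathbb{Z},\pi_k)$, where $\pi_k:\mathbb{Z}/n_{k+1}\mathbb{Z}\to\mathbb{Z}/n_k\mathbb{Z}$ is reduction, with kernel cyclic of order $p_{k+1}$. First, for each $k$ I would invoke Theorem~1.2 to obtain an arc-like continuum $Y_k$ together with a period-$n_k$ homeomorphism $h_k:Y_k\to Y_k$ having a unique fixed point $\ast_k$ and every other point of least period exactly $n_k$.

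The heart of the argument is then to assemble these levels into a compatible inverse tower. I would produce surjective bonding maps $q_k:Y_{k+1}\to Y_k$ satisfying $q_k\circ h_{k+1}=h_k\circ q_k$, which amounts to realizing $Y_k$ as the quotient $Y_{k+1}/\langle h_{k+1}^{n_k}\rangle$, where $\langle h_{k+1}^{n_k}\rangle\cong\mathbb{Z}/p_{k+1}\mathbb{Z}$ is the kernel of the $k$-th projection in the tower of cyclic groups. This is done inductively: take $Y_1$ to be the continuum of Theorem~1.2 with $n_1=p_1$, and given $(Y_k,h_k)$ construct $(Y_{k+1},h_{k+1})$ as a cyclic $p_{k+1}$-fold branched cover of $Y_k$ ramified only over $\ast_k$, with a chosen generator $\tau_{k+1}$ of the deck group commuting with a lift $\widetilde{h_k}$ of $h_k$. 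Setting $h_{k+1}=\tau_{k+1}\circ\widetilde{h_k}$ then yields a homeomorphism of period $n_{k+1}$ whose $n_k$-th power equals $\tau_{k+1}$, and the covering map plays the role of $q_k$. The induction must be carried out entirely within the arc-like category; this is achieved by representing each $Y_k$ itself as an inverse limit of arcs equipped with a compatible finite cyclic symmetry at every level of chaining, and then forming the branched cover chain-by-chain. This is precisely the chain-cover construction underlying Lewis's proof.

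Finally, set $\mathcal{P}=\underleftarrow{\lim}(Y_k,q_k)$. Since an inverse limit of arc-like continua under surjective bonding maps is arc-like, $\mathcal{P}$ is arc-like. The homeomorphisms $\{h_k\}$ are intertwined by the $q_k$, so they fit together coordinate-wise: for each $z\in\mathbb{Z}_{\mathbb{P}}$, represented by $(z_k)\in\prod\mathbb{Z}/n_k\mathbb{Z}$, the formula $H_z((y_k))=(h_k^{z_k}(y_k))$ defines a homeomorphism of $\mathcal{P}$, and the resulting map $\mathbb{Z}_{\mathbb{P}}\times\mathcal{P}\to\mathcal{P}$ is continuous because on each factor the inverse-limit topology is generated by the coordinate projections and each $h_k$ is a homeomorphism. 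This is the desired $\mathbb{P}$-adic Cantor group action.

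The main obstacle is the equivariant branched-cover step: one must produce the arc-like cover $Y_{k+1}\to Y_k$ with prescribed cyclic deck group and a lifted period-$n_k$ action, all while preserving arc-likeness. Everything else (the arc-likeness of the final inverse limit, the continuity of the group action, and the identification of the inverse system of cyclic groups with $\mathbb{Z}_{\mathbb{P}}$) is essentially formal once the tower $(Y_k,h_k,q_k)$ has been successfully built.
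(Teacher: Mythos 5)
The statement you are proving is cited to Lewis's paper \cite{LewisGroups} without proof in the present paper; the authors instead prove the strengthened Theorem~\ref{odo} (pseudo-arc, odometer dynamics, extension to a disk homeomorphism), and your proposal should be measured against that construction. Your skeleton matches it: an inverse tower of arc-like continua with cyclic branched covering bonding maps, with the $\mathbb{P}$-adic group acting coordinatewise through the cyclic quotients. You also correctly identify the crux, namely producing the equivariant branched cover $Y_{k+1}\to Y_k$ \emph{within the arc-like category}; but you then defer that entire step to ``the chain-cover construction underlying Lewis's proof,'' which is precisely the content one would have to supply. That is not a small gap — it is essentially the whole theorem.

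More substantively, the inductive framing you adopt is structurally at odds with how the construction actually runs. You propose to fix an abstract arc-like $(Y_k,h_k)$ and only then build $Y_{k+1}$ as a cyclic $p_{k+1}$-fold branched cover of it. But a branched cover of an arc-like continuum is a priori only tree-like (a $p$-od-like continuum), not arc-like, and once $Y_k$ has been frozen as an abstract space you have lost the handle you need to recover arc-likeness of the cover. In the paper's proof of Theorem~\ref{odo}, the families of maps $f_{i,n}$ on the trees $T_n$ are constructed \emph{simultaneously for all levels} $n$ (the ``diagonal'' construction in Claim~\ref{PSS}), commuting with the tree-level branched covers $\varphi$ and with the rotations $R_n$; each $\mathcal{P}_n=\varprojlim(T_n,f_{i,n})$ is then verified to be arc-like via the $\eps$-maps $p_{k,n}$ furnished by Claim~\ref{lem:arclike}, and the branched covers $\zi:\mathcal{P}_{n+1}\to\mathcal{P}_n$ are induced automatically from $\varphi$. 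The coordination across levels — and the specific tree-level presentation — is what makes the arc-likeness argument go through; a ``build-$Y_k$-then-cover-it'' induction cannot see this. There is also a group-theoretic subtlety you gloss over: $h_{k+1}=\tau_{k+1}\circ\widetilde{h_k}$ has order $n_{k+1}$ only if $h_{k+1}^{n_k}$ generates the deck group, i.e.\ $\gcd(n_k+a,p_{k+1})=1$ where $\widetilde{h_k}^{n_k}=\tau_{k+1}^a$; when $p_{k+1}\mid n_k$ (repeated primes are allowed) the exponent $a$ is not adjustable by changing the lift, so this must be arranged by the cover itself. In the paper's construction this falls out of the explicit rotations $R_n$; in an abstract induction it is an extra constraint you would have to impose.
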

In both \cite{LewisPeriodic} and \cite{LewisGroups} it is stated without proof that  modifications of the constructions presented therein can be made to ensure that $\mathcal{P}$ is a pseudo-arc, and that the homeomorphism $g$ in Theorem \ref{Thm:LewisPeriodic} extends to a period $n$ rotation of $\mathbb{D}^2$. Even though it is widely accepted that these results indeed admit such modifications, a formal proof has not appeared in the literature, and since the main result of the present paper resolves a long-standing problem, we have decided that a formal proof with the desired modifications  should be supplied. The following result is also a more detailed description of the dynamical properties of Lewis' maps. By $\omega(f,x)$ we denote $\omega$-limit set of $x$ with respect to a map $f$, and by $\Per(f)$ the set of periodic points of $f$. 
\begin{theorem}\label{odoOld}
Let $\mathbb{P}=(q_n:n\in\mathbb{N})$ be a sequence of primes and $\phi_{\mathbb{P}}:\Lambda\to \Lambda$ be the $\mathbb{P}$-adic odometer. There exists a pseudo-arc homeomorphism $\mainmap:\Ps_\infty\to \Ps_\infty$ and $0_{\infty}\in \Ps_\infty$ such that \begin{enumerate}
    \item $\Per(\mainmap)=\{0_{\infty}\}$, 
    \item $\mainmap|_{\omega(\mainmap,x)}$ is conjugate to $\phi_{\mathbb{P}}$ for every $x\in \Ps_\infty\setminus\{0_{\infty}\}$, and
    \item $x\in \omega(\mainmap,x)$ for every $x\in \Ps_\infty$,
 \end{enumerate}
where $\Ps_\infty=\varprojlim (\Ps_n,\varphi_{\infty,n})$ is an inverse limit of pseudo-arcs, with the bonding maps $\varphi_{\infty,n}\colon {\black \Ps_{n+1}\to \Ps_n}$ being {\black restrictions of the} branched {\black $2\text{-to-}1$ covers on $6$-dimensional cubes.}
\end{theorem}

{\black In fact, we will prove an extended version of Theorem~\ref{odoOld}, which is Theorem~\ref{odo}. We will present it in details later since it requires additional technical notation.}

The main difficulty in obtaining homeomorphisms on pseudo-arc with finite non-zero entropy is sufficient control of  their dynamics. Theorem~\ref{odoOld} serves as our starting point for constructing a more complicated homeomorphism with richer dynamics and to obtain the main result it is important that such starting homeomorphism carries no topological entropy. A very useful procedure that helps us in this direction is called the {\em Denjoy-Rees technique} and was described in \cite{Cro}.  There, the authors used the technique to get the following result: any compact manifold of dimension at least two admitting a minimal uniquely ergodic homeomorphism  also admits a uniquely ergodic homeomorphism with positive topological entropy. 
The Denjoy-Rees technique can be viewed as an ingenious generalization of the famous Denjoy example \cite{Denjoy} (a periodic point free homeomorphism of the unit circle which is not conjugate to a rotation) and subsequently more involved Rees' construction \cite{Rees} (homeomorphism of the two-torus which is minimal and has positive topological entropy) with the additional new technique which allows to control how rich the dynamics of these homeomorphisms {\black are}. For a more detailed outline of the Denjoy-Rees technique we refer the reader to the article \cite[Subsection 1.5.2. pp. 257--259]{Cro}.
We follow the Denjoy-Rees technique described above, however, several new difficulties arise in our setting. The first of them is that we are working on a one-dimensional space, unlike in \cite{Cro}. We solve this by providing an inverse limit version of the Denjoy-Ress technique; namely we construct our pseudo-arc as the inverse limit of pseudo-arcs and we are applying the technique on the coordinate spaces of the inverse limit. The other difficulty is that, after this enrichment, we want to get back {\black a} space homeomorphic to the original one-dimensional space, the pseudo-arc. Here we use several properties of the pseudo-arc, namely it is important that the space is homogeneous so that we can find homeomorphism between neighbourhoods of different sets and points.  
Also, crookedness of the pseudo-arc is used in a sense that the deformations we perform on the space throughout our construction cannot make it even more crooked. The argument would fail at this point in the case of many other arc-like continua (e.g. for the arc or the Knaster continuum). In \cite{Cro} the authors state a collection of conditions $\mathbf{A_1}$-$\mathbf{A_3}$, $\mathbf{B_1}$-$\mathbf{B_6}$ and $\mathbf{C_1}$-$\mathbf{C_8}$ which allow them to obtain their main result. They state the conditions for {\em rectangles} which are in their context closed unit balls in $\R^d$ where $d>1$ is the dimension of a given compact topological manifold. For us, the rectangles from their setting will be pre-images of natural projections from the inverse limit of closed cubes {\em tamely embedded}\footnote{A $d$-dimensional cube $B\subset \mathbb{R}^d$ is \emph{tamely embedded} in the interior of $X$ if $B$ is the image of the unit ball of $\mathbb{R}^d$ under a continuous one-to-one map from $\mathbb{R}^d$ into $X$.} in $\mathbb{D}^4$, where $\mathbb{D}^4$ denotes the four dimensional unit cube. Therefore, our rectangles will be of the form of closed cubes from  $\mathbb{D}^4$ times a Cantor set. Despite this change in the setting, basic tools from \cite{Cro} still hold true and we will be able to perform an adaptation of the Denjoy-Rees technique through inverse limits. Our basic starting point is to construct a Cantor set $K\subset \mathcal{P}_{\infty} \subset\mathbb{D}_{\infty}$ of positive Haar measure. Then the orbits of $K$ are ``blown-up'' similarly as in Rees' construction and the Denjoy-Rees technique  will in addition allow us to completely control the orbits of points from $K$ so that no measure escapes to an uncontrolled part of $\mathcal{P}_{\infty}\setminus K$. Therefore we will be able to control invariant measures on $\mathcal{P}_{\infty}$, subsequently also metric entropy as required and use the variational principle to determine explicit values of topological entropy as well.

The paper is organized as follows. In Sections~\ref{preliminaries} and ~\ref{crookedness}, we present some standard notation and state several results on crookedness that we will use in later sections.
In Section~\ref{sec:odo}, 
we will prove Theorem~\ref{odoOld}.
In Section~\ref{sec:DRtechnique}, we will introduce conditions for the Denjoy-Rees technique in our setting. Section~\ref{sec:mainproof} is the core part of the paper; in this section we will prove Lemma~\ref{lem:main}, i.e., show how to realize conditions given in Section~\ref{sec:DRtechnique}. Later in Subsection~\ref{Q_is_pseudo} we prove the crucial properties of constructed maps on the pseudo-arc.
In Appendix~\ref{sec:appendix}, written by George Kozlowski, a  proof of Theorem \ref{thm:Kozlowski} that we use in our proof of the main result is included, as we could not find any reference for it in the literature.

\section{Preliminaries}\label{preliminaries}

In this section we introduce some preliminary notation needed through the rest of the paper.
Let $\N:=\{0,1,2,\dots\}$.

Let $Z$ be a compact metric space and $f:Z\to Z$ a continuous map. 

For the main technical tool we use the
\emph{inverse limit spaces}. For a collection of continuous maps $f_i:Z_{i+1}\to Z_i$ where $Z_i$ are compact metric spaces for all $i\geq 1$ we define
\begin{equation}
\underleftarrow{\lim} (Z_i,f_i)
:=
\{\big(z_{1},z_2,\ldots \big) \in Z_1\times Z_2,\ldots\big|  
z_i\in Z_i, z_i=f_i(z_{i+1}), \text{ for any } i \geq 1\}.
\end{equation}
We equip $\underleftarrow{\lim} (Z_i,f_i)$ with the subspace 
metric induced from the 
\emph{product metric} in $Z_1\times Z_2\times\ldots$,
where $f_i$ are called the {\em bonding maps}. In general we start the enumeration for the inverse limit with $1$, however it will sometimes be convenient to start with $0$ (e.g. in Section~\ref{sec:odo}).
If $Z_i=Z$ and $f_i=f$ for all $i\geq 1$, the inverse limit space $\underleftarrow{\lim} (Z_i,f_i)$ 
also comes with a natural homeomorphism, 
called the \emph{natural extension} of $f$ (or the 
\emph{shift homeomorphism})
$\hat f:\underleftarrow{\lim} (Z,f)
\to \underleftarrow{\lim} (Z,f)$, 
defined as follows. 
For any $z= \big( z_{1},z_2,\ldots \big)\in \underleftarrow{\lim} (Z,f)$,
\begin{equation}
\hat{f}(z):= \big(f(z_1),z_{1},z_2,\ldots \big).
\end{equation}
By $\pi_{i}$ we shall denote
the \emph{$i$-th projection} 
from 
$\underleftarrow{\lim} (Z,f)$ to the $i$-th coordinate. 

Now let us give preliminaries on the topological and metric entropy.
In this paper topological entropy of $f$ will be denoted $h_{\mathrm{top}}$ and if $\mu$ is an $f$-invariant probability measure, then associated metric entropy is denoted $h_{\mu}(f)$. For definitions and basic facts on invariant measures and entropy we refer the reader to standard textbooks, e.g. \cite{Walters}. Below we briefly recall a few most important facts used in this paper.

If $\mathcal{M}_f(Z)$ denotes the collection of all $f$-invariant and Borel probability measures of $Z$, and $\mathcal{M}^e_f(Z)$ denotes the the collection of all the elements of $\mathcal{M}_f(Z)$ that are ergodic, then the  \emph{variational principal} connects the topological and metric entropies:
$$
h_{\mathrm{top}} (f)=\sup\{h_{\mu}(f): \mu\in \mathcal \mathcal{M}_{f}(Z)\}=\sup\{h_{\mu}(f): \mu\in \mathcal \mathcal{M}^e_{f}(Z)\}.
$$

Finally, let us introduce the notion that will be used in the very end of the paper.
A measurable dynamical system $(Z, \mathcal{B},S)$ is a bijective bi-measurable map $S$ on a set $Z$ with $\sigma$-algebra $\mathcal{B}$. 

An $S$-invariant set $Z_0\subset Z$ is {\em universally full}, if it has full measure for any $S$-invariant probability measure on $(Z,\mathcal{B})$.

Two measurable systems $(Z,\mathcal{B}_1, S)$ and $(Y,\mathcal{B}_2,T)$ are called {\em universally isomorphic} if there exist:
\begin{enumerate}
\item $S$-invariant universally full set $Z_0\subset Z$,
\item $T$-invariant universally full set $Y_0\subset Y$, and
\item bijective bi-measurable map $\theta:Z_0\to Y_0$ such that $\theta \circ S=T\circ \theta$.
\end{enumerate}

It is well known that universally isomorphic systems have the same topological entropy.

\section{Crookedness Revisited}\label{crookedness}

In what follows we will recall known results that will help us proving Theorem~\ref{odo}.

\begin{definition}
	A metric $d$ on a finite (topological) tree $T$ is called an \emph{arc-length metric}, if for each pair of points $x$ and $y$ in $T$, there is an isometry $\beta: [0,d(x,y)]\to T$ such that $\beta(0)=x$ and $\beta(d(x,y))=y$. For a subset $A\subset T$, $\mathrm{diam}(A)$ denotes the supremum of distances between two points from $A$ with respect to $d$. For $f,g:T\to T$, $d(f,g)$ will denote $\sup_{x\in T} d(f(x),g(x))$.
\end{definition}

\begin{definition}
 For $m\geq 3$, a \emph{chain} 
is a finite family of open subsets
$\mathcal{D}=\{D_{\ell}\}_{\ell \in \Bbb Z_m}$ of $\mathbb{R}^n$ 
such that
\begin{equation}
D_{\ell} \cap D_{ k} \ne \emptyset \text{ if and only if } 
| k-  \ell|\leq 1.
\end{equation}
Each element $D_{\ell}$ is called a link.
\end{definition}

\begin{definition}
Let $\mathcal{D}=\{D_{ \ell} :  \ell \in \Z_m\}$ and
$\mathcal{C}=\{C_{ \ell} :  \ell \in \Z_s\}$ 
be two chains. 
We say $\mathcal D$ is \emph{crooked inside} 
$\mathcal{C}$
if there exists a map $\pi \colon \Z_m\to \Z_s$
with the following properties. 
\begin{enumerate}[(i)]
	\item for any $ i \in \Bbb Z_m$,
	the closure of $D_{ i}$ is contained in $C_{\pi( i)}$.
	\item if 
	$ i< j$ 
	are two elements in $\Bbb Z_m$,
	such that $\pi( i) \leq \pi( j)$,
	and for any $ i< k<  j$,
	$\pi( i) \leq \pi( k) \leq \pi( j)$, 
	then there exist
	$ i< u\leq v< j$,
	such that, 
	\begin{align}
	& |\pi( i)-\pi( v)|\leq 1. \\
	& |\pi( j)-\pi( u)|\leq 1.
	\end{align}
	\end{enumerate}
\end{definition}
\begin{definition}
	Let $\kappa:T\to T$ be a continuous function, where $T$ is a finite tree and let $n>0$ be an integer.
	A path $\alpha:[0,1]\to T$ is \emph{$(\kappa,\eps)$-crooked}, if there exist $0\leq s< t\leq 1$ such that $d(\kappa\circ \alpha(0),\kappa\circ \alpha(t))\leq \eps$ and $d(\kappa\circ \alpha(s),\kappa\circ \alpha(1))\leq\eps$. The map $\kappa$ is called \emph{$\eps$-crooked} if every path $\alpha: [0,1]\to T$ is $(\kappa,\eps)$-crooked.
\end{definition}

Fix $\eps>0$. Let $f\colon T\to T$ be a continuous map on a finite tree $T$. We define 
$$L(\eps,f)=\mathrm{sup}\{\delta>0; \text{ if } d(x,y)<\delta \text{ then } d(f(x),f(y))<\eps\}.$$
The following result follows from Lemma~2 in \cite{Brown}.

\begin{lem}\label{lem:Brown}
	For each $i\geq 1$ {\black fix a finite tree $T^i$} of the diameter $d_i$. Suppose that for all $m\geq 1$, $f_m:T^{m+1}\to T^m$ is $\eps_m$-crooked with $\eps_m<\min_{i<m-1}L(2^{-m}d_i,f_i\circ \ldots \circ f_{m-1})$.
	Then $\varprojlim(T^i,f_i)$ is hereditarily indecomposable.
\end{lem}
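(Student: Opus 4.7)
The plan is twofold. First, bootstrap the single-step $\eps_m$-crookedness of each $f_m$ into strong crookedness of the \emph{composed} bonding maps $F_{i,m+1}:=f_i\circ f_{i+1}\circ\cdots\circ f_m\colon T_{m+1}\to T_i$ as $m\to\infty$ with $i$ fixed. Second, invoke the classical Bing-type theorem saying that such strong crookedness of compositions at every level is sufficient to make $\varprojlim(T_i,f_i)$ hereditarily indecomposable. The quantitative hypothesis $\eps_m<\min_{i<m-1}L(2^{-m}d_i,f_i\circ\cdots\circ f_{m-1})$ is calibrated precisely to make the first step succeed.

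\emph{Step 1 (crookedness of compositions).} Fix $i\ge 1$, take $m\ge i+2$, and let $\alpha\colon[0,1]\to T_{m+1}$ be any path. The $\eps_m$-crookedness of $f_m$ gives $0\le s<t\le 1$ with $d_{T_m}(f_m\alpha(0),f_m\alpha(t))\le\eps_m$ and $d_{T_m}(f_m\alpha(s),f_m\alpha(1))\le\eps_m$. The hypothesis says $\eps_m<L(2^{-m}d_i,f_i\circ\cdots\circ f_{m-1})$, which by definition of $L(\cdot,\cdot)$ means that $f_i\circ\cdots\circ f_{m-1}\colon T_m\to T_i$ sends any $\eps_m$-close pair in $T_m$ to a pair within $2^{-m}d_i$ in $T_i$. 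Applying this to the two $\eps_m$-foldings above and using $F_{i,m+1}=(f_i\circ\cdots\circ f_{m-1})\circ f_m$ yields $d_{T_i}\bigl(F_{i,m+1}(\alpha(0)),F_{i,m+1}(\alpha(t))\bigr)\le 2^{-m}d_i$ and $d_{T_i}\bigl(F_{i,m+1}(\alpha(s)),F_{i,m+1}(\alpha(1))\bigr)\le 2^{-m}d_i$. Hence $F_{i,m+1}$ is $2^{-m}d_i$-crooked, a quantity that tends to $0$ as $m\to\infty$ for fixed $i$.

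\emph{Step 2 (crookedness to hereditary indecomposability) and the main obstacle.} Having Step 1, I would invoke the standard (Bing-type) theorem: an inverse limit of finite trees whose composed bonding maps $F_{i,m+1}$ are $\delta$-crooked for arbitrarily small $\delta$ at every level $i$ is hereditarily indecomposable. A self-contained argument proceeds by contradiction. Assume $K=A\cup B\subset\varprojlim(T_i,f_i)$ is a decomposable subcontinuum with $a\in A\setminus B$ and $b\in B\setminus A$. Using the inverse-limit basis, pick $i_0$ and disjoint open sets $U\ni\pi_{i_0}(a)$, $V\ni\pi_{i_0}(b)$ in $T_{i_0}$ with $\pi_{i_0}^{-1}(\overline U)\cap B=\emptyset$ and $\pi_{i_0}^{-1}(\overline V)\cap A=\emptyset$; choose $\delta>0$ so the $\delta$-balls around $\pi_{i_0}(a),\pi_{i_0}(b)$ lie in $U,V$; and by Step 1 pick $m$ with $F_{i_0,m+1}$ being $\delta$-crooked. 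Since $\pi_{m+1}(K)$ is a subcontinuum of $T_{m+1}$, hence a subtree, it contains an arc $\beta\colon[0,1]\to T_{m+1}$ from $\pi_{m+1}(a)$ to $\pi_{m+1}(b)$; applying $\delta$-crookedness to $\beta$ produces $s<t$ with $F_{i_0,m+1}(\beta(t))\in U$ and $F_{i_0,m+1}(\beta(s))\in V$, and lifting back to $K$ gives $k_t\in A$ and $k_s\in B$ with $s<t$. The main obstacle is closing this contradiction: one iterates the argument on the sub-arc $\beta|_{[s,t]}$ at progressively larger $m'$ with smaller $\delta'$, producing arbitrarily fine alternations between $A$- and $B$-fibers, and appeals to a standard compactness/chain-cover argument in the classical Bing framework to derive the contradiction. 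Step 1 is routine modulus-of-continuity bookkeeping; Step 2 is where all the topology lives, and the quantitative choice of $\eps_m$ is exactly what makes the compounding of foldings uniform across levels of the inverse limit.
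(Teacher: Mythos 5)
Your overall plan matches the paper, whose entire proof of this lemma is to cite Lemma~2 of Brown's 1960 paper; your Step~1 is precisely the modulus-of-continuity bookkeeping that citation encodes. In Step~2, however, you do not need the iteration and ``compactness/chain-cover argument'' you describe --- your own setup already closes the contradiction in one move once you add the observation that subtrees of a tree are arcwise convex. Indeed, $\pi_{m+1}(A)$ is a subtree of $T_{m+1}$, and the unique arc in $T_{m+1}$ between any two of its points lies inside it; you have $\beta(0)=\pi_{m+1}(a)\in\pi_{m+1}(A)$ and (via your lift $k_t\in A$) $\beta(t)\in\pi_{m+1}(A)$, so $\beta|_{[0,t]}\subset\pi_{m+1}(A)$, and in particular $\beta(s)\in\pi_{m+1}(A)$ since $s<t$. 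Then some $a'\in A$ satisfies $\pi_{m+1}(a')=\beta(s)$, whence $\pi_{i_0}(a')=F_{i_0,m+1}(\beta(s))\in V$, contradicting $\pi_{i_0}^{-1}(\overline{V})\cap A=\emptyset$. No passage to finer scales is required, and the proposed iteration would in fact be awkward, since $\beta(s)$ and $\beta(t)$ need not have fibers lying entirely in $B$ and $A$ respectively, which is what restarting the argument on $\beta|_{[s,t]}$ would want.
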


We will also need the following definition from \cite{Tuncali}. In the context of topological graphs it is in fact equivalent to the standard definition on open sets.
\begin{definition}
We say a map $f \colon T \to T$ {\black on a finite tree $T$} is \emph{topologically exact (or locally eventually onto)} if  for each nondegenerate subcontinuum $A$ of $T$, there is a positive integer $N$ such that $f^N(A) = T$.
\end{definition}

{\black Let $\mathbb{D}^2:=\{z\in \mathbb{C}, |z|\leq 2 \}|\times \{0\}$ and for $k\geq 2$ define $\mathbb{D}^k:=\{z\in \mathbb{C}, |z|\leq 2 \}\times [-1,1]^{k-2}$. For $k\geq 3$ we let $\z^k:\mathbb{D}^k\to\mathbb{D}^k$ be the branched 2-fold covering given by $$\z^k(r\cos(\theta), r\sin(\theta),t_3,t_4,\ldots, t_k)=(r\cos(2\theta), r\sin(2\theta),t_3,t_4,\ldots, t_k).$$ 
 Let $\bar{0}$ be the complex zero, $\sqrt[2^n]{1}=\{w_j:j=0,\ldots,2^n-1\}$ be the complex $2^n$-th roots of $1$, and $\ell_l:=\{(r\cos(\frac{l2\pi}{2^n}), r\sin(\frac{l2\pi}{2^n}),0)|r\in[0,1/3]\}$ be the chords in $\mathbb{D}^2$ from $(\bar{0},0)$ to $(\frac{1}{3}w_l,0)$, for $l=0,\ldots,2^{n}-1$. 
We set 
$$T_n:=\bigcup_{i=0}^{2^{n}-1}\ell_i\subset \mathbb{D}^2\subset \mathbb{D}^k.$$  
 We call $\ell_i$ the {\em branches} of $T_n$.
Note that $T_n$ is invariant under rotation 
$R^k_n:\mathbb{D}^k\to \mathbb{D}^k$ defined by 
$$R^k_n(r\cos(\theta), r\sin(\theta),t_3,t_4,\ldots, t_k)=(r\cos(\theta+2^{-n+1}\pi), r\sin(\theta+2^{-n+1}\pi),t_3,t_4,\ldots, t_k).$$
When the dimension is clear from the context we will simply write $\z$ instead of $\z^k$ and $R_n$ instead of $R^k_n$.
Let $Q\subset \mathbb{D}^k$ be such that $R_n(Q)=Q$ and let $g:Q\to \mathbb{D}^k$. We say that $g$ is {\em invariant under rigid rotation $R_n$}, if $R_n(g(x))=g(R_n(x))$ for all $x\in Q$.}


Let us recall Lemmas~3.2 and 3.3 from \cite{Tuncali}. Both furthermore parts follow by the constructions in these proofs (c.f. Remark. 1)) on page 203 and page 205 in \cite{Tuncali}). 
Note that the proofs of these two lemmas are constructive. 
{\black To see the moreover parts, first note that the proofs of Lemmas 3.2 and 3.3 from \cite{Tuncali} rely on the construction in Lemma 3.1 from \cite{Tuncali}. Observe that the map $h$ from Lemma~\ref{tunc:3.2} can be defined so that it is invariant under rigid rotation $R_n$. First, we can start with the partition of $T_n$ that is invariant under $R_n$. Then, in the notation of the proof of Lemma 3.1 from \cite{Tuncali}, if $R,R'$ are elements of the partition such that $R=R_n(R')$, then $R^{*}=R_n(R')^{*}=R_n((R')^{*})$. Now, let $R$ be an element of partition contained in $\ell_j$. When defining $\phi_R$ on p. 200 in \cite{Tuncali} we change the order $(v_{R,0},a_1,a_2,\ldots ,a_{2^n},a_{2^n -1},\ldots,a_1,v_{R,1})$ to the order depending on $j$, for example 
$$(v_{R,0},a_{1+j\Mod{2^n}},a_{2+j\Mod{2^n}},\ldots ,a_{2^n+j\Mod{2^n}},a_{2^n -1 +j\Mod{2^n}},\ldots,a_{1+j\Mod {2^n}},v_{R,1} ).$$ 
We also assume that $a_{j}\in \ell_{j-1}$ for all $1\leq j\leq 2^n$. The last assumption is not important for the invariance under $R_n$, but it will be important later in the construction.}

\begin{figure}[!ht]
\centering
\begin{tikzpicture}[scale=3]
\draw (0,1)--(0,-1);
\draw (-1,0)--(1,0);
\draw (-0.03,0.1)--(0.03,0.1);
\draw (-0.03,0.2)--(0.03,0.2);
\draw (-0.03,0.3)--(0.03,0.3);
\draw (-0.03,0.4)--(0.03,0.4);
\draw (-0.03,0.5)--(0.03,0.5);
\draw (-0.03,0.6)--(0.03,0.6);
\draw (-0.03,0.7)--(0.03,0.7);
\draw (-0.03,0.8)--(0.03,0.8);
\draw (-0.03,0.9)--(0.03,0.9);
\draw[orange,thick] (0.7,0)--(0.8,0);
\draw[orange,thick,-latex] (0.7,0)--(0.8,0);
\draw[orange,thick] (0.1,0)--(0.2,0);
\draw[orange,thick,-latex] (0.1,0)--(0.2,0);

\draw (-0.03,-0.1)--(0.03,-0.1);
\draw (-0.03,-0.2)--(0.03,-0.2);
\draw (-0.03,-0.3)--(0.03,-0.3);
\draw (-0.03,-0.4)--(0.03,-0.4);
\draw (-0.03,-0.5)--(0.03,-0.5);
\draw (-0.03,-0.6)--(0.03,-0.6);
\draw (-0.03,-0.7)--(0.03,-0.7);
\draw (-0.03,-0.8)--(0.03,-0.8);
\draw (-0.03,-0.9)--(0.03,-0.9);
\draw[red,thick] (-0.7,0)--(-0.8,0);
\draw[red,thick,-latex] (-0.7,0)--(-0.8,0);
\draw[red,thick] (-0.1,0)--(-0.2,0);
\draw[red,thick,-latex] (-0.1,0)--(-0.2,0);

\draw (0.1,-0.03)--(0.1,0.03);
\draw (0.2,-0.03)--(0.2,0.03);
\draw (0.3,-0.03)--(0.3,0.03);
\draw (0.4,-0.03)--(0.4,0.03);
\draw (0.5,-0.03)--(0.5,0.03);
\draw (0.6,-0.03)--(0.6,0.03);
\draw (0.7,-0.03)--(0.7,0.03);
\draw (0.8,-0.03)--(0.8,0.03);
\draw (0.9,-0.03)--(0.9,0.03);
\draw[teal,thick] (0,0.7)--(0,0.8);
\draw[teal,thick,-latex] (0,0.7)--(0,0.8);
\draw[teal,thick] (0,0.1)--(0,0.2);
\draw[teal,thick,-latex] (0,0.1)--(0,0.2);

\draw (-0.1,-0.03)--(-0.1,0.03);
\draw (-0.2,-0.03)--(-0.2,0.03);
\draw (-0.3,-0.03)--(-0.3,0.03);
\draw (-0.4,-0.03)--(-0.4,0.03);
\draw (-0.5,-0.03)--(-0.5,0.03);
\draw (-0.6,-0.03)--(-0.6,0.03);
\draw (-0.7,-0.03)--(-0.7,0.03);
\draw (-0.8,-0.03)--(-0.8,0.03);
\draw (-0.9,-0.03)--(-0.9,0.03);
\draw[blue,thick] (0,-0.7)--(0,-0.8);
\draw[blue,thick,-latex] (0,-0.7)--(0,-0.8);
\draw[blue,thick] (0,-0.1)--(0,-0.2);
\draw[blue,thick,-latex] (0,-0.1)--(0,-0.2);

\draw[red,thick] (-0.7,0.05)--(-0.8,0.05)--(-0.8,0.1)--(-0.6,0.1)--(-0.6,0.15)--(-0.9,0.15)--(-0.9,0.2)--(-0.7,0.2);

\draw[red,thick,latex-] (-0.7,0.15)--(-0.8,0.15);
\draw[blue,thick,latex-,rotate=90] (-0.7,0.15)--(-0.8,0.15);
\draw[orange,thick,latex-,rotate=180] (-0.7,0.15)--(-0.8,0.15);
\draw[teal,thick,latex-,rotate=270] (-0.7,0.15)--(-0.8,0.15);

\draw[orange,dotted,thick] (0.7,-0.05)--(0.8,-0.05)--(0.8,-0.1)--(0.6,-0.1)--(0.6,-0.15)--(0.9,-0.15)--(0.9,-0.2)--(0.7,-0.2);

\draw[teal, dashed, thick] (0.05,0.7)--(0.05, 0.8)--(0.1,0.8)--(0.1,0.6)--(0.15,0.6)--(0.15,0.9)--(0.2,0.9)--(0.2,0.7);

\draw[blue, dash dot, thick] (-0.05,-0.7)--(-0.05,-0.8)--(-0.1,-0.8)--(-0.1,-0.6)--(-0.15,-0.6)--(-0.15,-0.9)--(-0.2,-0.9)--(-0.2,-0.7);

\draw[orange,dotted,-latex] (0.1,0.04)--(0.2,0.04);

\draw[teal,dashed,-latex,rotate=90] (0.1,0.05)--(0.2,0.05);

\draw[red,thick] (-0.1,-0.05)--(-0.25,-0.05)--(-0.25,-0.1)--(-0.05,-0.1)--(-0.05,-0.25)--(0.05,-0.25)--(0.05,-0.05)--(0.25,-0.05)--(0.25,0.05)--(0.05,0.05)--(0.05,0.3)--(0.1,0.3)--(0.1,0.1)--(0.3,0.1)--(0.3,-0.1)--(0.1,-0.1)--(0.1,-0.3)--(-0.1,-0.3)--(-0.1,-0.15)--(-0.3,-0.15)--(-0.3,0.05)--(-0.2,0.05);

\draw[red,thick,-latex] (-0.15,-0.05)--(-0.2,-0.05);

\draw[red,thick,latex-] (-0.3,-0.02)--(-0.3,-0.07);

\draw[blue,dash dot, rotate=90] (-0.1,-0.07)--(-0.27,-0.07)--(-0.27,-0.12)--(-0.07,-0.12)--(-0.07,-0.27)--(0.07,-0.27)--(0.07,-0.07)--(0.27,-0.07)--(0.27,0.07)--(0.07,0.07)--(0.07,0.32)--(0.12,0.32)--(0.12,0.12)--(0.32,0.12)--(0.32,-0.12)--(0.12,-0.12)--(0.12,-0.32)--(-0.12,-0.32)--(-0.12,-0.17)--(-0.32,-0.17)--(-0.32,0.07)--(-0.2,0.07);

\draw[blue,dash dot,-latex,rotate=90] 
(-0.13,-0.07)--(-0.19,-0.07);

\draw[blue,dash dot,latex-,rotate=90] (-0.32,-0.04)--(-0.32,-0.09);

\draw[orange,dotted,latex-,rotate=180] (-0.36,-0.02)--(-0.36,-0.06);

\draw[orange,dotted, thick] (0.36,0.15)--(0.36,-0.15)--(0.2,-0.15);

\draw[teal,dashed,latex-,rotate=270] (-0.36,-0.02)--(-0.36,-0.06);

\draw[teal,dashed, thick,rotate=90] (0.36,0.15)--(0.36,-0.15)--(0.2,-0.15);

\node at (0.42,-0.05) {\scriptsize $a_1$};
\node at (0.1,0.4) {\scriptsize $a_2$};
\node at (-0.4,0.05) {\scriptsize $a_3$};
\node at (-0.05,-0.38) {\scriptsize $a_4$};
\node at (1,-0.05) {\scriptsize $\ell_0$};
\node at (0.1,1) {\scriptsize $\ell_1$};
\node at (-1,0.05) {\scriptsize $\ell_2$};
\node at (-0.05,-1) {\scriptsize $\ell_3$};
\end{tikzpicture}
\caption{{\black Sketch of a graph of the adjusted map $h$ on $T_2$ from Lemma~\ref{tunc:3.3}.}}\label{fig1:sym}
\end{figure}

\begin{lem}\label{tunc:3.2}
Let $f \colon T \to T$ be a piecewise linear and topologically exact map {\black on a finite tree $T$}. Then, for each real number $\delta$ with $0 < \delta < 1$, there exists a piecewise linear map $\tilde{f}\colon T \to T$ and a positive real number $\xi < \delta$ such that
\begin{enumerate}
\item $d(\tilde{f},f)<\delta$.
\item For each subcontinuum $A$ in $T$ with $\diam(A) < \xi /5$,
$\diam(\tilde{f}(A)) \geq  2 \mathrm{diam}(A)$. 
\item The map $\tilde{f}$ is topologically exact.
\end{enumerate}
Furthermore, $\tilde{f}=f\circ h$ for some map $h\colon T\to T$.  {\black Moreover, if $T=T_n$ and $f$ is invariant under rigid rotation $R_n$, then $h$ and $\tilde f$ are invariant under rigid rotation $R_n$.}
\end{lem}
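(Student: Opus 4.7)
The plan is to write $\tilde f := f\circ h$, where $h\colon T\to T$ is a piecewise linear map close to the identity that zigzag-folds each edge of a sufficiently fine subdivision of $T$ back onto itself with large absolute slope $K$. As $f$ is piecewise linear and topologically exact on a finite tree, it has a common Lipschitz constant $L$ on its pieces and a positive minimum absolute slope $s_{\min}$ (any constant piece would contradict topological exactness). Fix an integer $K>4/s_{\min}$, and choose $\eta>0$ so small that $L\eta<\delta$, $5\eta/K<\delta$, and $\eta$ is smaller than the length of every linear piece of $f$. Subdivide $T$ by adding all non-smoothness points of $f$ together with enough extra vertices so that every resulting edge has length less than $\eta$ and is contained in a single linear piece of $f$. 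On each edge $e$ of this subdivision, define $h|_e\colon e\to e$ to be the piecewise linear zigzag that fixes the two endpoints of $e$ and has constant absolute slope $K$; these definitions agree at shared endpoints, producing a continuous piecewise linear surjection $h\colon T\to T$ with $d(h,\id)\leq\eta$.

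For (1), the $L$-Lipschitz estimate gives $d(\tilde f,f)\leq L\cdot d(h,\id)\leq L\eta<\delta$. For (2), set $\xi:=5\eta/K$, so $\xi<\delta$. Let $A\subset T$ be a subcontinuum with $\diam(A)<\xi/5=\eta/K$; then $A$ is contained in an arc whose length is strictly less than that of a single monotone piece of $h$, so $A$ meets at most two consecutive monotone pieces of $h$. In the worst case $A$ straddles one fold point, yielding $\diam(h(A))\geq (K/2)\,\diam(A)$ (and $\diam(h(A))\geq K\,\diam(A)$ otherwise). Since $h(A)$ sits inside a single edge of the subdivision, which in turn lies in one linear piece of $f$, and $f$ acts there with slope at least $s_{\min}$, we get
$$\diam(\tilde f(A))=\diam(f(h(A)))\geq s_{\min}\cdot (K/2)\,\diam(A)\geq 2\,\diam(A),$$
by the choice $K>4/s_{\min}$. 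The final clause $\tilde f=f\circ h$ is immediate from the construction.

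The main obstacle is (3), topological exactness of $\tilde f$. A two-stage plan seems natural: given any nondegenerate subcontinuum $A\subset T$, property (2) forces $\diam(\tilde f^n(A))$ to at least double each iterate so long as it stays below $\xi/5$, so some iterate $B:=\tilde f^{n_0}(A)$ has $\diam(B)\geq \xi/5$. For the ``macroscopic'' phase, one exploits that $h$ maps each edge of the subdivision onto itself, hence is surjective and open on the interior of every monotone piece, so that a subcontinuum of diameter $\geq \xi/5$ cannot be reduced by $h$ to something of smaller diameter; combining this with the topological exactness of $f$ (which produces, for each such ``macroscopic'' subcontinuum, an $f$-iterate equal to $T$) should give $\tilde f^N(B)=T$ for some $N$. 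The delicate point is converting iterates of $f$ into iterates of $\tilde f$ uniformly across the finitely many combinatorial types of ``large'' subcontinua, and this is where I expect the most bookkeeping; once that combinatorial reduction is set up carefully, topological exactness of $\tilde f$ follows.
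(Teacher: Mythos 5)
The paper does not actually prove this statement: it is recalled verbatim as Lemma~3.2 of the cited reference by Kawamura, Tuncali and Tymchatyn, and the only remark the authors add is that the ``furthermore'' clause $\tilde f=f\circ h$ can be read off from the construction there. Your approach---composing $f$ with a fine zigzag fold $h$ that is piecewise linear of high constant absolute slope and fixes the edges of a subdivision---is entirely in the spirit of that construction, and items (1) and the ``furthermore'' clause are handled correctly. However, your treatment of (2) has two quantitative slips. First, you only bound edge lengths from \emph{above} by $\eta$; since a zigzag of slope $\pm K$ on an edge of length $\ell$ has monotone pieces of length $\ell/K$, an edge much shorter than $\eta$ would have monotone pieces much shorter than $\eta/K=\xi/5$, so a continuum $A$ with $\diam(A)<\xi/5$ could straddle several fold points and the $(K/2)$-estimate fails. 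You need to force the edge lengths to lie in an interval $[\eta/2,\eta)$, say, and take $\xi$ correspondingly smaller. Second, the assertion that ``$h(A)$ sits inside a single edge of the subdivision, which in turn lies in one linear piece of $f$'' is false when $A$ straddles a vertex $v$ of the subdivision that is a non-smooth point of $f$: the two edges on either side of $v$ are in different linear pieces, $f$ may fold at $v$, and you lose another factor of $2$. The final bound still closes with a larger $K$ (and also note $K$ must be odd for the zigzag to fix both endpoints of an edge), but the case analysis must include this.

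The real gap is item~(3). You state yourself that ``once that combinatorial reduction is set up carefully, topological exactness of $\tilde f$ follows,'' but no argument is supplied, and the reduction is not a formality. Property~(2) together with surjectivity does \emph{not} imply topological exactness: (2) only forces growth while $\diam(\tilde f^n(A))<\xi/5$ and says nothing about continua already of macroscopic size, which may subsequently shrink. The sketch you give---``a subcontinuum of diameter $\geq\xi/5$ cannot be reduced by $h$ to something of smaller diameter; combining this with the topological exactness of $f$ \ldots should give $\tilde f^N(B)=T$''---quietly conflates $\tilde f^N=(f\circ h)^N$ with $f^N\circ h$; there is no containment like $\tilde f^N(B)\supset f^N(h(B))$ in general, because after each application of $f$ the interleaved $h$ can fold the image back onto itself. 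One genuinely needs a covering argument at the scale of the subdivision (e.g.\ showing that once a continuum contains a full edge, its $\tilde f$-image again contains a full edge, and then invoking exactness of $f$ uniformly over the finitely many edges), and that step is exactly what is missing from your write-up.
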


\begin{lem}\label{tunc:3.3}
Let $\tilde{f} \colon T \to T$ be a topologically exact and piecewise linear map {\black on a finite tree $T$} which satisfies the following condition: there exists a real number $0 < \beta < 1$ such that
\begin{itemize}
\item[$(+)_\beta$] for each subcontinuum $A$ with $\diam(A) \leq \beta$, $\diam(\tilde{f}(A)) \geq 2 \diam(A)$.
\end{itemize}
Then, for each real number $\delta$ with $0<\delta<1$, there exist a piecewise linear and topologically exact map $F \colon T\to T$, a positive real number $\xi < \delta$, and an integer $n > 0$ such that
\begin{enumerate}
	\item $d(F,\tilde{f})<\delta$.
	\item $F^n(A) = T$ for every subcontinuum $A$ of $T$ with $\diam(A) \geq  \xi /5$.
\item The map $F$ satisfies condition $(+)_{\xi/5}$.
\item Each map ${\black \tau} \colon [0, 1] \to T$ is $(F^n, \delta)$-crooked.	 
\end{enumerate}
Furthermore, $F=\tilde{f}\circ g$ for some map $g\colon T\to T$. {\black Moreover, if $T=T_n$ and $f$ is invariant under rigid rotation $R_n$, then $h$ and $\tilde f$ are invariant under rigid rotation $R_n$.}
\end{lem}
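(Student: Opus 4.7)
The plan is to bootstrap from the hypotheses in two stages. First, combine $(+)_\beta$ with topological exactness of $\tilde{f}$ to fix an iterate $n$ and a scale $\xi$ so that every subcontinuum of diameter $\geq \xi/5$ is already mapped onto $T$ by $\tilde{f}^n$. Second, define $F = \tilde{f}\circ g$, where $g\colon T\to T$ is a piecewise linear perturbation of the identity that folds the tree at a microscopic scale $\eta\ll \xi/5$; the $(+)_\beta$-expansion then amplifies these folds over $n$ iterates into $\delta$-scale oscillations of $F^n$, yielding $(F^n,\delta)$-crookedness, while the scale separation $\eta\ll\xi/5$ preserves the doubling inequality at scale $\xi/5$.

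To carry this out I would first fix $n$ and $\xi$. Topological exactness gives some $n_0$ with $\tilde{f}^{n_0}(A)=T$ whenever $\diam(A)\geq \beta/2$; iterating $(+)_\beta$, any subcontinuum of diameter $\geq \beta/2^{k+1}$ reaches diameter $\geq \beta/2$ within $k$ further steps, so $\tilde{f}^{n_0+k}(A)=T$. Set $n:=n_0+k$ with $k$ large enough that $n$-fold doubling turns the $\eta$-scale into the $\delta$-scale, and choose $\xi$ with $\xi/5 < \beta/2^{k+1}$. Next I would define $g$ on a fine subdivision of $T$ of meshsize $\eta$, replacing the identity on each edge by a zigzag contained in an $\eta$-tube around that edge, with the endpoints of each edge fixed so that $\diam(g(A))\geq \diam(A)$ for every subcontinuum $A$ in question. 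The combinatorial pattern of the zigzag is chosen — in the spirit of the classical Bing/Brown construction of crooked interval maps — so that for every continuous path $\omega\colon[0,1]\to T$ one can find $0\leq s<t\leq 1$ with $d(F^n(\omega(0)),F^n(\omega(t)))\leq\delta$ and $d(F^n(\omega(s)),F^n(\omega(1)))\leq\delta$, which is $(F^n,\delta)$-crookedness.

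Items (1)--(4) then follow. Item (1): $d(F,\tilde{f}) \leq \mathrm{Lip}(\tilde{f})\cdot d(g,\id) < \delta$. Item (2): $\tilde{f}^n$ satisfies it by construction, and the property is stable under sufficiently small $C^0$-perturbations by a compactness argument on the hyperspace of subcontinua of diameter $\geq \xi/5$. Item (3): for $A$ with $\diam(A)\leq \xi/5$ one has $\diam(g(A))\geq \diam(A)$ by the choice of $g$, and then $\tilde{f}$ at least doubles it by $(+)_\beta$, provided $\diam(g(A))\leq \beta$, which holds for $\xi$ small enough. Item (4) is the content of the construction of $g$; topological exactness of $F$ is immediate from (2) together with piecewise linearity.

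The main obstacle is the tension between (3) and (4): crookedness demands many folds of $g$, while $(+)_{\xi/5}$ forbids folds from eating into diameters at that scale. The resolution is the scale separation $\eta\ll \xi/5$ combined with the choice of a large iterate $n$, so that $\eta$-scale folds are essentially invisible at scale $\xi/5$ but accumulate, through the $(+)_\beta$-doubling of $\tilde{f}$ at each intermediate scale, into the $\delta$-scale oscillations of $F^n$ required by $(F^n,\delta)$-crookedness.
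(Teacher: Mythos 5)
The paper does not actually prove this lemma: it is Lemma 3.3 of Kawamura--Tuncali--Tymchatyn \cite{Tuncali}, cited verbatim, with the sentence ``Note that the proofs of these two lemmas are constructive'' and the remark that the ``furthermore'' factorization $F=\tilde f\circ g$ is read off from that construction. So there is no in-paper proof to compare against, only the literature reference.

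Your high-level framework---fix $n$ and $\xi$ via $(+)_\beta$-doubling plus a uniform cover time, then set $F=\tilde f\circ g$ with $g$ a $C^0$-small fold and track how $\tilde f$'s expansion interacts with $g$'s folds over $n$ iterations---is reasonable and consistent with the factorization the paper infers from \cite{Tuncali}. The resolution of the tension between (3) and (4) by choosing $g$ so that each linear piece of the zigzag sweeps the whole $\eta$-edge (hence $g([a,v])\supseteq[a,v]$ at every edge endpoint $v$ and $\diam(g(A))\geq\diam(A)$ for all subcontinua $A$) is also correct as far as it goes.

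The genuine gap is in item (4). You treat the $(F^n,\delta)$-crookedness as a consequence of ``$\eta$-scale folds of $g$ amplified by $(+)_\beta$-doubling into $\delta$-scale oscillations,'' but that mechanism does not produce crookedness. Iterating an expanding fold produces lots of monotone branches, not a crooked pattern: for the tent map $\tilde f$, $\tilde f^n$ has $2^n$ folds and slopes $\pm 2^n$, yet taking the path $\omega(u)=u/2^n$ gives $\tilde f^n\circ\omega=\mathrm{id}$, which is not $(\cdot,\delta)$-crooked for any $\delta<1/2$. So no amount of ``amplification'' of a single-scale periodic zigzag $g$ through $\tilde f$-doubling can, by itself, deliver crookedness of $F^n$; crookedness is a specific nested/hierarchical folding pattern, not a count of folds or the scale of oscillation. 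To prove (4) you would need $g$ (or the composite $F^n$) to realize a Bing/Brown-type hierarchical crooked pattern explicitly, and you would then need to verify that this hierarchical pattern survives precomposition with the intermediate $\tilde f\circ g$'s in $F^n=(\tilde f\circ g)^n$; none of this is present beyond the phrase ``in the spirit of the classical Bing/Brown construction.'' A secondary, more minor, gap: for item (2) you appeal to $C^0$-stability of ``$\tilde f^n(A)=T$,'' but $d(F,\tilde f)<\delta$ does not make $F^n$ a small perturbation of $\tilde f^n$ (the error grows like $\mathrm{Lip}(\tilde f)^n$); the correct route is to rerun the expansion bootstrap directly for $F$ using (3) together with exactness of $F$, which itself needs justification.
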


\begin{remark}\label{rem:const}
If we start with a piecewise linear and topologically exact map $f$ {\black on a finite tree $T$}, we can apply Lemma~\ref{tunc:3.3}. Indeed, we must first apply Lemma~\ref{tunc:3.2} and then Lemma~\ref{tunc:3.3}.
This way we obtain the map $F=f\circ h\circ g$, where $h$ comes from Lemma~\ref{tunc:3.2} and $g$ from Lemma~\ref{tunc:3.3} applied to $f\circ h$.\\
In the next section we will be concerned with maps on {\black $2^n$-ods $T_n$ which are invariant under $R_n$}.
\end{remark}

\section{Proof of Theorem~\ref{odoOld}}\label{sec:odo}

{\black We will prove an extended version of Theorem~\ref{odoOld} which is required for our particular purposes.

\begin{theorem}\label{odo}
For every $n\geq 1$ let $S_n$ be homeomorphic to $\mathbb{D}^6$ and let $\Ps_n\subset S_n$ be a pseudo-arc.  For every $n\geq 1$ there exists a set $\mathbb{F}_n\subset S_n$ homeomorphic to $\mathbb{D}^4$ such that $\mathcal{P}_n\cap \mathbb{F}_n$ is a single point denoted by $\hat 0_n$ and 
for each $n\geq 1$ there are maps $\hat R_n:S_n\to S_n$ and $\varphi_{\infty, n}:S_{n+1}\to S_n$ such that
\begin{enumerate}
\item $\hat R_n$ is identity on $\mathbb{F}_n$ and $2$-periodic on $S_n\setminus \mathbb{F}_n$.
\item $\varphi_{\infty,n}$ is a $2$-to-$1$ branched covering with branch set $\mathbb{F}_n$, $\hat R_n\circ \varphi_{\infty,n}=\varphi_{\infty, n+1}\circ \hat R_{n+1}$ and $\varphi_{\infty, n}(\mathcal{P}_{n+1})=\mathcal{P}_n$.
\end{enumerate}

Let $\Ps_\infty=\varprojlim (\Ps_n,\varphi_{\infty,n})$ be an inverse limit of pseudo-arcs and let $\mainmap:\mathbb D_\infty\to \mathbb D_\infty$, where
$\mathbb{D}_{\infty}=\prod_{i=1}^\infty S_i$, be given as follows
\begin{equation}\label{mainmap}\mainmap=(\hat R_1,\hat R_2, \hat R_3,\ldots).
\end{equation}

Let $\mathbb{P}=(q_n:n\geq 1)$ be a sequence of primes and $\phi_{\mathbb{P}}:\Lambda\to \Lambda$ be the $\mathbb{P}$-adic odometer.
Then we have the following statements.

\begin{enumerate}
    \item $\Per(\mainmap)=\{0_{\infty}\}$, where $0_{\infty}=(\hat 0_1,\hat 0_2,\ldots)$, 
    \item $\mainmap|_{\omega(\mainmap,x)}$ is conjugate to $\phi_{\mathbb{P}}$ for every $x\in \Ps_\infty\setminus\{0_{\infty}\}$, and
    \item $x\in \omega(\mainmap,x)$ for every $x\in \Ps_\infty$,
 \end{enumerate}


Furthermore, for every $\eps>0$ there exists an $\eps$-cover $\mathcal{W}$ of $\mathcal{P}_n$, $\bigcup_{W\in \mathcal{W}}\mathrm{int}(W)\supset \mathcal{P}_n$, by tamely embedded $6$-cubes
such that if $Z\subset \mathcal{P}_n$ is a continuum and if $d_H(Z,\mathbb{F}_n)>\eps$ then there exists a chain subcover $\mathcal{W}'\subset \mathcal{W}$ of $Z$. If we fix $p\in \mathcal{P}_n$ at start, then we may require that $p$ belongs only to one element of the cover $\mathcal{W}$.
\end{theorem}

\begin{remark}
To simplify the notation, in particular, when using the composition of the bonding maps, we will write $\z_{\infty}$ instead of $\z_{\infty,n}$. In such cases, the domain $S_{n+1}$ of $\z_{\infty}$ will be clear from the context.
\end{remark}
}
\begin{proof}[Proof of Theorem \ref{odo}]
For clarity of exposition, we shall carry out our construction for the $2$-adic odometer; i.e. $q_n=2$ for all $n\in \mathbb{N}$. The construction for other odometers is analogous. 

The following claim is implicitly contained in the proof of \cite[Theorem, p.336, $k=1$]{LewisGroups}{\black, see also the construction of maps $g_j$ in \cite[Theorem 2, p.82]{LewisPeriodic}, property $h^{i}_n\circ f_i=f_i\circ h^{i+1}_n$ and the first line on page 83 of \cite{LewisPeriodic}, where in our construction we start with $M> 2^{N}$.}

\begin{claim}\label{lem:arclike}
For any integer ${\black N} > 0$ and any $\gamma>0$, there exist {\black a sequence of} map{\black s} ${\black\tilde q}_{\gamma,n}\colon T_n\to [0,1]$,
{\black and} ${\black\tilde f}_{\gamma,n}:T_n\to T_n$, {\black $n=1,\ldots, N$} so that ${\black\tilde f}_{\gamma,n}$ is piecewise linear and topologically exact, invariant under rigid rotation of branches of $T_n$, and such that ${\black\tilde f}_{\gamma,n-1}\circ \z=\z\circ {\black\tilde f}_{\gamma,n}$  and if ${\black\tilde q}_{\gamma,n}(x)={\black\tilde q}_{\gamma,n}(y)$ then $d({\black\tilde f}_{\gamma,n}(x),{\black\tilde f}_{\gamma,n}(y))<\gamma$.
\end{claim}

Now we will prove the core claim of this proof, which uses also the claim above.

\begin{claim}\label{PSS}
There exists a family of maps 
 $\{g_{i,n}:T_n\to T_n\}_{i\in \N}$ and a sequence $(\gamma_i)_{i\in \N}$ of positive real numbers decreasing to $0$ such that if we set $f_{i,n}=\hat f_{\gamma_i,n}\circ g_{i,n}$ for all positive integers $i,n$
then $\mathcal{P}_{n}=\varprojlim(T_{n}, f_{i,n})$ is a pseudo-arc for each $n$. 
\end{claim}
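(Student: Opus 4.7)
The plan is to build the family $g_{i,n}$ by induction on $i$ (for fixed $n$) so that each bonding map $f_{i,n} = \hat f_{i,n}\circ g_{i,n}$ is piecewise linear, topologically exact, invariant under rotation of the branches of $T_n$, and extremely crooked, while at the same time inheriting from $\hat f_{i,n}$ the approximate factoring through an arc supplied by Claim~\ref{lem:arclike}. Hereditary indecomposability of $\mathcal{P}_n = \varprojlim(T_n,f_{i,n})$ will come from Lemma~\ref{lem:Brown}, arc-likeness from the factoring, and Bing's characterization will identify $\mathcal{P}_n$ as the pseudo-arc.

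Fix $n$, pick a sequence $\gamma_i\searrow 0$, and let $\hat f_{i,n}:=\hat f_{\gamma_i,n}$ be the map from Claim~\ref{lem:arclike}. Suppose $g_{1,n},\ldots,g_{i-1,n}$ have already been defined; then every composition $f_{j,n}\circ\cdots\circ f_{i-1,n}$ is piecewise linear and topologically exact. Choose
\[
\delta_i \;<\; \min\Bigl\{\gamma_i,\ 2^{-i},\ L\bigl(2^{-i}d_j,\, f_{j,n}\circ\cdots\circ f_{i-1,n}\bigr) : 1\le j<i-1\Bigr\},
\]
where $d_j=\diam(T_n)$ in the arc-length metric (the set inside the min is finite and strictly positive at each stage). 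Apply Lemma~\ref{tunc:3.2} to $\hat f_{i,n}$ with parameter $\delta_i$ to obtain $\tilde f_i = \hat f_{i,n}\circ h_i$, piecewise linear, topologically exact, and satisfying the doubling condition $(+)_\beta$ needed by Lemma~\ref{tunc:3.3}. Then apply Lemma~\ref{tunc:3.3} to $\tilde f_i$ with parameter $\delta_i$ to obtain a piecewise linear, topologically exact map $F_i = \tilde f_i\circ g'_i = \hat f_{i,n}\circ h_i\circ g'_i$ together with an integer $m_i>0$ such that $F_i^{m_i}$ is $\delta_i$-crooked. Expanding the composition,
\[
F_i^{m_i} \;=\; \hat f_{i,n}\circ\bigl(h_i\circ g'_i\circ\hat f_{i,n}\bigr)^{m_i-1}\circ h_i\circ g'_i,
\]
so defining
\[
g_{i,n}\;:=\;\bigl(h_i\circ g'_i\circ\hat f_{i,n}\bigr)^{m_i-1}\circ h_i\circ g'_i
\]
gives $f_{i,n} = \hat f_{i,n}\circ g_{i,n} = F_i^{m_i}$, which is $\delta_i$-crooked, piecewise linear, topologically exact, and (by Remark~\ref{rem:const} together with the rotation-invariance of $\hat f_{i,n}$) commutes with the rigid rotations of $T_n$.

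Hereditary indecomposability of $\mathcal{P}_n$ is now a direct application of Lemma~\ref{lem:Brown}, as the choice of $\delta_i$ was tailored precisely to beat $L(2^{-i}d_j,f_{j,n}\circ\cdots\circ f_{i-1,n})$ for all $j<i-1$. For arc-likeness, the fibre estimate of Claim~\ref{lem:arclike} transfers to $f_{i,n}$ via the map $\tilde q_i := q_{\gamma_i,n}\circ g_{i,n} : T_n\to[0,1]$: if $\tilde q_i(x)=\tilde q_i(y)$ then
\[
d(f_{i,n}(x),f_{i,n}(y)) = d\bigl(\hat f_{i,n}\circ g_{i,n}(x),\,\hat f_{i,n}\circ g_{i,n}(y)\bigr) < \gamma_i.
\]
Since $\gamma_i\to 0$ and the $L$-estimates built into the $\delta_i$ ensure that small-fibre-ness at coordinate $k$ propagates backwards to the first coordinate with a uniform loss, a standard inverse-limit argument produces, for each $\eps>0$, a map $\mathcal{P}_n\to[0,1]$ with all fibres of diameter less than $\eps$; this is arc-likeness. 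Combined with hereditary indecomposability and Bing's theorem, $\mathcal{P}_n$ is a pseudo-arc.

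The main technical obstacle is the dual bookkeeping: first, arranging that $f_{i,n}$ \emph{itself}---not some iterate of an auxiliary map---has the prescribed form $\hat f_{i,n}\circ g_{i,n}$ \emph{and} is $\delta_i$-crooked, which is what the absorption trick $F_i^{m_i} = \hat f_{i,n}\circ g_{i,n}$ accomplishes; and second, choosing $\delta_i$ small enough simultaneously to satisfy Brown's inequality against every prior composition and to respect $\delta_i\le\gamma_i$ so that the approximate-factoring through an arc survives. Both are handled by the single choice of $\delta_i$ above, thanks to the fact that at each inductive stage only finitely many previous moduli of continuity need to be dominated.
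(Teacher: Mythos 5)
Your outline captures the right skeleton of the argument --- the absorption trick $F_i^{m_i}=\hat f_{i,n}\circ g_{i,n}$ coming from Lemmas~\ref{tunc:3.2}--\ref{tunc:3.3}, hereditary indecomposability from Lemma~\ref{lem:Brown}, and arc\-/likeness from the fibre property of the maps $q_{\gamma,n}$ in Claim~\ref{lem:arclike}. But there are two genuine gaps.

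The main one is structural: you build the sequence $(g_{i,n})_{i}$ by an induction in $i$ for each \emph{fixed} $n$. That produces a pseudo\-/arc $\mathcal{P}_n$ for each $n$, but gives no control whatsoever over how the constructions at different $n$ relate to each other. The rest of the proof of Theorem~\ref{odo} (and hence of the whole paper) depends crucially on the hidden invariant $f_{i,n}\circ\z=\z\circ f_{i,n+1}$ for all $i$ and $n$, which is used immediately after Claim~\ref{PSS} to extend the $f_{i,n}$ to near-homeomorphisms of $\mathbb{D}^2$, and to build the branched covers $\z_\infty:\Ps_{n+1}\to\Ps_n$ and the limit space $\Ps_\infty$ in Claim~\ref{claim4}. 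The paper's proof is \emph{diagonal} precisely to preserve this: at inductive stage $k+1$ one constructs the new crookedness map $h_{k,k+1}$ only at the top level $T_{k+1}$, commuting with the rotation group, and then \emph{projects} it down through $\z$ to all levels $n\le k$ (and lifts the earlier $f_{j,n}$ up to level $k+1$). Commutativity with $\z$ and the crookedness estimates at every level then come for free. A per-$n$ induction, with independent choices of $\delta_i$, $h_i$, $g'_i$ and $m_i$ at each $n$, cannot produce maps satisfying $f_{i,n}\circ\z=\z\circ f_{i,n+1}$.

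The second gap is in the arc\-/likeness argument. You fix the sequence $\gamma_i\searrow 0$ once and for all (so $\hat f_{i,n}=\hat f_{\gamma_i,n}$ is determined before the induction starts) and only make $\delta_i$ small during the induction. But $\delta_i$ controls the crookedness needed for Brown's Lemma, not the backward propagation of a $\gamma_i$-small fibre. What you actually obtain from $\tilde q_i(x)=\tilde q_i(y)$ is $d(\pi_i(\hat x),\pi_i(\hat y))<\gamma_i$; to make $d(\pi_j(\hat x),\pi_j(\hat y))$ small for $j<i$ you need $\gamma_i$ itself to be dominated by $L(\cdot,f_{j,n}\circ\cdots\circ f_{i-1,n})$, i.e.\ $\gamma_i$ must be chosen \emph{after} the maps $f_{0,n},\dots,f_{i-1,n}$ exist, in terms of their moduli of continuity. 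This is exactly what the paper does (the choice of $\gamma_{k+1}$ subject to \eqref{eq:estpart1}--\eqref{eq:estpart2}), and it cannot be replaced by merely requiring $\gamma_i\to 0$. As written, the ``standard inverse-limit argument'' you invoke is not available because the inequality it needs has not been arranged.
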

\begin{proof}[Proof of Claim \ref{PSS}]
The construction will be ``diagonal''. 
Let $\{\eps_k\}_{k=0}^\infty$ be any sequence of real numbers decreasing to $0$. Later, while constructing maps $f_{k,n}$, we will speed up convergence of the sequence $\eps_k$ to satisfy assumptions of Lemma~\ref{lem:Brown}. 
Set $\gamma_0=\eps_0$. Let
$\hat{f}_{0,1}={\black\tilde f}_{\gamma_0,1}$ and $q_{0,1}={\black\tilde q}_{\gamma_0,1}$ be provided by {\black Claim}~\ref{lem:arclike}
for $\gamma=\gamma_0$. Since $\hat{f}_{0,1}$ is topologically exact
we can find an integer $s_0>0$ such that $\hat f_{0,1}^{s_0}(\ell_0)=\hat f_{0,1}^{s_0}(\ell_1)=T_1$. 
Next, applying Remark~\ref{rem:const} to $\hat f_{0,1}^{s_0}$ we can find a map $h_{0,1}$ and a positive integer $j_0$ such that if we denote $F_{0,1}=\hat f_{0,1}^{s_0}\circ h_{0,1}$, then each map ${\black \tau} \colon [0, 1] \to T_n$ is $(F^{j_0}_{0,1}, \eps_0)$-crooked.
Denote $f_{0,1}=F^{j_0}_{0,1}$ and observe that we have $f_{0,1}=\hat{f}_{0,1}\circ g_{0,1}$, where $g_{0,1}=\hat{f}_{0,1}^{s_0-1}\circ h_{0,1}\circ F^{j_0-1}_{0,1}$.

Next, assume that the maps $f_{i,n},q_{i,n},h_{i,n},g_{i,n}$ have already been defined for $n=1,\ldots,k$ and $0\leq i \leq k-1$. We adjust $\eps_{k+1}$, if necessary, to be so small that 
$$
\eps_{k+1}<\min_{n\leq k;i<k}L(2^{-k-1}\diam (T_n), f_{i,n}\circ f_{i+1,n}\circ \ldots \circ f_{k-1,n}).
$$
For each $0\leq j \leq  k$ we choose $f_{j,k+1}$, a lift of $f_{j,k}$ through the branched cover $\z$, and similarly we define maps $g_{j,k+1}, h_{j,k+1}$. {\black To choose the lift $f_{j,k+1}$ we use the fact that $f_{j,k}$ is constructed independently on branches $\ell_i$, invariant under $R_k$, and also the fact that for the consecutive pieces of linearity of $f_{j,k}$ in $\ell_i$ their consecutive images via $f_{j,k}$ move between branches 
$\ell_l$ by one index 
forward or backward $\Mod{2^n}$,
see Fig.~\ref{fig1:sym}. When defining $f_{j,k+1}$ we also move between branches 
$\ell_l$ by one index 
forward or backward $\Mod{2^n}$,
following the above pattern determined by $f_{j,k}$. This allows us to find proper ordering of branches when lifting.}

Let $\gamma_{k+1}<\eps_{k+1}$ be such that, for each $n=1,\ldots, k+1$, if $x,y\in T_n$ are such that $d(x,y)<\gamma_{k+1}$,
then for each $0\leq j< k$
we have
\begin{equation}\label{eq:estpart1}
d(f_{j,n}\circ \ldots \circ f_{k-1,n}(x),f_{j,n}\circ \ldots \circ f_{k-1,n}(y))<\eps_{k+1}
\end{equation}
and {\black for $0< j< k$ we have}
\begin{eqnarray}
    \label{eq:estpart2}
&{\black d(g_{j,n}\circ f_{j,n}\circ \ldots \circ f_{k-1,n}(x),g_{j,n}\circ f_{j,n}\circ \ldots \circ f_{k-1,n}(y))
<\eps_{k+1},}\\
&{\black d(g_{j,n}(x),g_{j,n}(y))
<\eps_{k+1}}\nonumber
\end{eqnarray}

Apply {\black Claim}~\ref{lem:arclike} to $\gamma_{k+1}$ to get maps
$\hat{f}_{k,n}={\black\tilde f}_{\gamma_{k+1},n}$ and $q_{k,n}={\black\tilde q}_{\gamma_{k+1},n}$ for $n=1,\ldots, k+1$. By topological exactness, we select a positive integer $s_{k}$
such that $\hat f_{k,n}^{s_k}(\ell_i)=T_n$ for $i=0,\ldots,n-1$.
Repeating the argument from above, by Remark~\ref{rem:const} we can find a map $h_{k,k+1}$ and a positive integer $j_{k+1}$ such that for the map $F_{k,k+1}=\hat f_{k,k+1}^{s_k}\circ h_{k,k+1}$ each map ${\black \tau} \colon [0, 1] \to T_{1}$ is $(F^{j_{k+1}}_{k,k+1}, \eps_{k+1})$-crooked. 
For each $j=0,1,\ldots,k{\black-1}$ we let $h_{k,k-j}$ to be a projection of $h_{k,k+1-j}$ through the branched cover $\z$, which is well defined since $h_{k,k+1}$ is invariant under rotation of branches of $T_{k+1}$ (see Remark~\ref{rem:const}).
Denote $F_{k,n}=\hat f_{k,n}^{s_k}\circ h_{k,n}$ for $n\leq k+1$ and
observe that we can represent $f_{k,n}=\hat{f}_{k,n}\circ g_{k,n}$, where $g_{k,n}= \hat f_{k,n}^{s_k-1}\circ h_{k,n}\circ F^{j_{k+1}-1}_{k,n}$.
Since $\z$ does not increase distance, and for every $n\leq k$ each ${\black \tau}  \colon [0,1]\to T_n$ can be represented as ${\black \tau} =\z \circ{\black \tau} '$ for some ${\black \tau} '\colon [0,1]\to T_{n+1}$, we see that each such map ${\black \tau} $ is $(F^{j_{k+1}}_{k,n}, \eps_{k+1})$-crooked.

By the construction, Lemma~\ref{lem:Brown} ensures that each $\mathcal{P}_n:=\varprojlim(T_n,f_{i,n})$ is hereditarily indecomposable. We also have an $2^n$-periodic homeomorphism $H\colon \mathcal{P}_n \to \mathcal{P}_n$ induced by $2^n$-periodic rotations of $T_n$ on each coordinate, where $\mathcal{P}_n=\varprojlim(T_n,f_{i,n})$.

It remains to prove that each $\mathcal{P}_{n}$ is arc-like. 
In order to prove it we note that $\mathcal{P}_n=\varprojlim(T_n,r_{j,n})$
where $r_{2j,n}=\hat f_{j,n}$ and $r_{2j+1,n}=g_{j,n}$ for $j=1,2,\ldots$.
But then, if $\pi_{j,n}$ denotes the $j$-th coordinate projection from $\mathcal{P}_n$ to $T_n$, we have a well defined projection $p_{j,n}=q_{j,n}\circ \pi_{2j,n}\colon \mathcal{P}_n\to [0,1]$
with the property that if $p_{k,n}(\hat x)=p_{k,n}(\hat y)$ for some $k$ and $\hat{x},\hat{y}\in \mathcal{P}_n$, then 
$$
d(r_{2k,n}(\pi_{2k,n}(\hat x)),r_{2k,n}(\pi_{2k,n}(\hat y)))=d(\hat f_{k,n}(\pi_{2k,n}(\hat x)),\hat f_{k,n}(\pi_{2k,n}(\hat y)))<\gamma_k
$$
which by \eqref{eq:estpart1} and \eqref{eq:estpart2} implies
that for $j=0,\ldots, 2k-1$ we have
$$
d(\pi_{j,n}(\hat{x}),\pi_{j,n}(\hat{y}))<\eps_k.
$$
But then
$$
d(\hat{x},\hat{y})\leq \eps_k\sum_{j=0}^{2k-1}2^{-j}+\sum_{j=2k}^{\infty}2^{-j}\leq 2\eps_k+2^{-2k+1}. 
$$
Therefore, for every $\eps>0$ there exists a natural number $k$ so that $p_{k,n}$ is an $\eps$-map.
This shows that for all $n\geq 1$ the space $\mathcal{P}_n$ is arc-like and thus $\mathcal{P}_n$ is the pseudo-arc, which proves the claim.
\end{proof}
For what follows we refer the reader to the Diagram~\ref{megadiagram}.
 {\black Note that} $f_{i,n}\circ \z=\z\circ f_{i,n+1}$ for all $i\in \mathbb{N}$ and $n\geq 1$.
{\black To see this, fix any $i\in \N$. In the construction above, the map $f_{i,n+1}$ was obtained by Claim~\ref{lem:arclike} and that is a consequence of $f_{i,n}\circ \z=\z\circ f_{i,n+1}$ for $n\leq i+1$. But for $n\geq i+1$, the map $f_{i,n+1}$ was obtained as a lift of $\varphi$.}
Since $T_1$ is an arc, by \cite{Barge2}  the maps $(f_{i,1}:i\in\mathbb{N})$ extend to near-homeomorphisms $(\bar f_{i,1}:i\in\mathbb{N})$ of the topological disk $\mathbb{D}^2$ (see also \cite{Youngs}). {\black We could possibly extend each $f_{i,1}$ to a map $\bar f_{i,1}$ of $\mathbb{D}^2$ which coincides on $T_n$ with $f_{i,1}$, however showing that such a map is a near-homeomorphism may be challenging, and in fact many maps on finite graphs do not extend to near-homeomorphsims \cite{Kasse} of the plane. Nonetheless, for our purpose it suffices to show how to extend our maps to near-homeomorphisms of $\mathbb{D}^3$.}

{\black
\begin{claim}\label{clm:extension}
The maps $(f_{i,n}:i\in\mathbb{N},n\geq 1)$ extend to near-homeomorphisms $(\bar f_{i,n}:i\in\mathbb{N},n\geq 1)$ of $\mathbb{D}^3$ and each $\bar f_{i,n}$ commutes with $R_n$.
\end{claim}
\begin{proof}
Fix $i\in \N$ and $n\geq 1$. By the construction $f_{i,n}\circ \z=\z\circ f_{i,n+1}$. First, we will define maps $\bar f_{k,k+1}$ and then use lifts and projections $\varphi$ to define maps $\bar f_{i,n}$, the same as it was done when we defined maps $f_{i,n}$. These lifts and projections will be well defined due to two features in the construction. First, the homeomorphism $\tilde f$ defined below will act in the same way on all radial lines; that is, $\tilde f((se^{i\phi},t))=(y e^{i\phi},t')$ for every $\phi$. Second, the smash map $\zeta$ is completely determined through its action on the $\zeta$-invariant set $\{(se^{i\phi},t): t\in [-1,1], s\in [0,1], \phi \in [0,2\pi/n]\}$, see Fig.~\ref{fig2:smash}.
 This allows us to provide exact formulas for the lifted/projected maps; since we are not using these formulas explicitly, we leave the details to the reader.

Fix $k\geq 0$ and for simplicity of notation we put $f=f_{k,k+1}$. 
We define the smash map $\zeta:\mathbb{D}^3\to \mathbb{D}^3$.  The construction will be in two steps: first we define the homeomorphism $\tilde f: \mathbb{D}^3\to \mathbb{D}^3$ and then compose it with $\zeta$ to obtain $\bar f_{k,k+1}$.
We will describe the construction when $n> 1$. For $n=1$ the construction is analogous but more technical to describe, since, in what follows, we would need to use half-circles instead of triangles. We leave the details of the particular case $n=1$ to the reader. Alternatively, the standard technique of \cite{Barge2} can be used, drawing a graph of $f_{i,1}$ in $\mathbb{D}^2$ symmetrically in $\bar 0$ to preserve rotation by $R_1$.

We consider triangles with vertices $(\frac{2}{3}w_j,0)$, $(\frac{2}{3}w_{j+1},0)$, and $(\bar 0,0)$. We project each point of the triangle along the lines defined by normal vectors opposite to the edge between $(\frac{2}{3}w_j,0)$ and $(\frac{2}{3}w_{j+1},0)$. So far, the map $\zeta$ has been defined for the polygon with vertices $(\frac{2}{3}w_1,0), (\frac{2}{3}w_{2},0), \ldots, (\frac{2}{3}w_{2^n},0)$. For points $(x,t)$, with $x$ in the polygon and $|t|\leq \frac{2}{3}$, we define $\zeta((x,t))=\zeta((x,0))$. The map $\zeta$ defined up to now is a near-homeomorphism that commutes with $R_n$, so we can extend it to a near-homeomorphism on $\mathbb{D}^3$, which commutes with $R_n$ and is an identity on the boundary.

We fix $\delta>0$ such that $f|_{\ell_0}\cap B((\bar 0,0),\delta)$ is linear. If we put $b=\ell_0\cap \partial B((\bar 0,0),\delta)$, then $f(b)\in \ell_0\setminus B((\bar 0,0),\delta)$. 
If $x\in B((\bar 0,0),\delta)$ or $f(x)\notin B((\bar 0,0),\delta)$ we define $g(x)=(y_1,|x_1|)$, where $f(x)=(y_1,0)$ and $x=(x_1,0)$. In other words, we lift the graph of $f$, see Figure~\ref{fig2:smash}.

If $f(x)\in B((\bar 0,0),\delta)$ or $x\notin B((\bar 0,0),\delta)$, then we find $y_1,y_2\in \ell_0$ such that $x\in (y_1,y_2)$ and $f(y_1),f(y_2)\in \partial B((\bar 0,0),\delta)$ and $f((y_1,y_2))\cap \partial B((\bar 0,0),\delta)=\emptyset$.\\
We have two cases; let us first assume that $f((y_1,y_2))\cap \ell_j\neq \emptyset$ and $f((y_1,y_2))\cap \ell_{j+1\Mod{2^n}}\neq \emptyset$ for some $0\leq j< 2^n$.
Then there exists a unique $z_1\in \mathbb{C}$, $|z_1|=\delta$, such that $z=(z_1,0)$ is in the triangle with vertices $(\frac{2}{3}w_j,0)$, $(\frac{2}{3}w_{j+ 1},0)$ and $(\bar 0,0)$, and such that $\zeta(z)=f(x)$.
We define $\tilde f(x)=(z_1,|x_1|)$. The case where $f((y_1,y_2))\cap \ell_j\neq \emptyset$ and $f((y_1,y_2))\cap \ell_{j-1\Mod{2^n}}\neq \emptyset$ for some $0\leq j< 2^n$ is symmetric to the above.\\
Now, let us assume that $f((y_1,y_2))\in \ell_j$ for some $0\leq j< 2^n$. Then we again consider the triangle with vertices $(\frac{2}{3}w_j,0)$, $(\frac{2}{3}w_{j+ 1},0)$ and $(\bar 0,0)$ and repeat the previous construction.\\
If $x\in \ell_j$ for $1\leq j< 2^n$, then we put $\tilde f(x)=R_n^{j}(\tilde f(R_n^{-j}(x)))$.
Map $\tilde f: T_n\to \mathbb{D}^3$ is a homeomorphism on its image, it commutes with $R_n$ and $\zeta\circ \tilde f=f$.

In the interval between pairs of points:
$(\frac{1}{3}w_1,\frac{2}{3})$ and $(\bar 0, \frac{2}{3})$; $(\bar 0, \frac{2}{3})$ and $(\bar 0,-\frac{2}{3})$; $(\frac{1}{3}w_1,-\frac{2}{3})$ and $(\bar 0, -\frac{2}{3})$
the map is identity. We have $\tilde f((\frac{1}{3}w_1,0))=(\frac{1}{3}w_1,\frac{1}{3})$. The points in the interval $(\frac{1}{3}w_1,t)$, $t\in [0,\frac{2}{3}]$ and in the interval $(\frac{1}{3}w_1,t)$, $t\in [-\frac{2}{3},0]$ are mapped linearly to the interval spanned by the values at the endpoints of these intervals.

It is not hard to find a continuous map $\eta\colon \mathbb{D}^3\to \mathbb{D}^3$ such that: 
\begin{itemize}
\item[(i)] for every $r\in [0,1]$, $t\in [-1,1]$ the map $\eta$ 
restricted to the circle $\{(x,t) : |x|=r\}\subset \mathbb{D}^3$ is a rotation; 
\item[(ii)] for $|t|\geq 2/3$ or $r\geq 1/3$ the map $\eta$ is identity on the associated circle; 
\item[(iii)] for any $z\in \ell_0$ the point $(y,t):=\eta (\tilde f (z))$ satisfies $(y,0)\in \ell_0$. 
\end{itemize}
For what follows, we refer the reader to Figure~\ref{fig:liftedf}.
Informally, the map $\eta$ ``unwinds'' the image of the chord $\ell_0$ by $\tilde f$ again to the square $Q$ spanned by points 
$(\frac{1}{3}w_1,2/3)$, $(\frac{1}{3}w_1,-2/3)$, $(\bar 0,-2/3)$ and $(\bar 0, 2/3)$. Now, if we consider the $\theta$-curve $\Gamma$ (i.e. a union of two Jordan curves) defined by $\partial Q \cup \ell_0$, then $(\eta \circ \tilde f)(\Gamma)\subset Q$ is also a $\theta$-curve defined in fact by $\partial Q \cup (\eta \circ \tilde f)(\ell_0)$. Then we can extend $\eta \circ \tilde f$ to a homeomorphism $\beta \colon Q\to Q$. But since $\beta$ is the identity in the interval $(\bar 0, t)$, $|t|\leq 2/3$ we can extend it further to a homeomorphism of the square $Q'=\{((s,0),t) :s,t\leq 1\}$ which is identity on the boundary of $Q'$. Observe that if we define $\tilde f$ on $Q'$ by putting $\tilde f = \eta^{-1}\circ \beta$ then it is well defined, that it, this new formula coincides on points previously defined. Then we can extend this map to a map on $\mathbb{D}^3$ by defining $\tilde f((se^{i\phi},t))=(y e^{i\phi},t')$ for each $\phi\in [0,2\pi)$, $s\in [0,1]$ and $-1\leq t\leq 1$, where $(y,t'):=\tilde f( (s,0),t))$. It is clear that the map $\tilde f$ defined as above is commuting with the rigid rotation $R_n$. We put $\bar f_{k,k+1}=\zeta\circ \tilde f$. Since $\zeta$ is commuting with $R_n$, also $\bar f_{k,k+1}$ is commuting with $R_n$ and it is also clear that $\bar f_{k,k+1}$ is a near-homeomorphism.

\end{proof}
}

\begin{figure}[!ht]
\centering
\begin{tikzpicture}[scale=2]
\draw (-1.5,0)--(1.5,0);
\draw (0,-1.5)--(0,1.5);
\draw (1.2,0)--(0,-1.2)--(-1.2,0)--(0,1.2)--(1.2,0);
\begin{scope}[decoration={
    markings,
    mark=at position 0.5 with {\arrow{<}}}
    ] 
\draw[postaction={decorate}] (0.6,0.6)--(0.3,0.3); 
\draw[postaction={decorate}] (0.5,0.7)--(0.1,0.3); 
\draw[postaction={decorate}] (0.4,0.8)--(0.1,0.5); 
\draw[postaction={decorate}] (0.3,0.9)--(0.1,0.7); 
\draw[postaction={decorate}] (0.2,1)--(0.1,0.9); 
\draw[postaction={decorate}] (0.7,0.5)--(0.3,0.1); 
\draw[postaction={decorate}] (0.8,0.4)--(0.5,0.1); 
\draw[postaction={decorate}] (0.9,0.3)--(0.7,0.1); 
\draw[postaction={decorate}] (1,0.2)--(0.9,0.1);
\end{scope}
\draw[red,>->](1.3,0.1)--(0.3,0.1)--(0.3,0.2)--(0.5,0.2)--(0.5,0.3)--(0.3,0.3)--(0.3,0.5)--(0.2,0.5)--(0.2,0.3)--(0.1,0.3)--(0.1,1.3);

\draw[red,<-<](0.2,1.3)--(0.2,1.1)--(0.6,0.7)--(0.55,0.85)--(0.85,0.55)--(0.7,0.6)--(1.1,0.2)--(1.3,0.2);

\begin{scope}[decoration={
    markings,
    mark=at position 0.5 with {\arrow{<}}}
    ] 
\draw[postaction={decorate}] (-0.6,-0.6)--(-0.3,-0.3); 
\draw[postaction={decorate}] (-0.5,-0.7)--(-0.1,-0.3); 
\draw[postaction={decorate}] (-0.4,-0.8)--(-0.1,-0.5); 
\draw[postaction={decorate}] (-0.3,-0.9)--(-0.1,-0.7); 
\draw[postaction={decorate}] (-0.2,-1)--(-0.1,-0.9); 
\draw[postaction={decorate}] (-0.7,-0.5)--(-0.3,-0.1); 
\draw[postaction={decorate}] (-0.8,-0.4)--(-0.5,-0.1); 
\draw[postaction={decorate}] (-0.9,-0.3)--(-0.7,-0.1); 
\draw[postaction={decorate}] (-1,-0.2)--(-0.9,-0.1);
\end{scope}
\draw[blue,>->](-1.3,-0.1)--(-0.3,-0.1)--(-0.3,-0.2)--(-0.5,-0.2)--(-0.5,-0.3)--(-0.3,-0.3)--(-0.3,-0.5)--(-0.2,-0.5)--(-0.2,-0.3)--(-0.1,-0.3)--(-0.1,-1.3);

\draw[blue,<-<](-0.2,-1.3)--(-0.2,-1.1)--(-0.6,-0.7)--(-0.55,-0.85)--(-0.85,-0.55)--(-0.7,-0.6)--(-1.1,-0.2)--(-1.3,-0.2);

\begin{scope}[decoration={
    markings,
    mark=at position 0.5 with {\arrow{<}}}
    ] 
\draw[postaction={decorate}] (-0.6,0.6)--(-0.3,0.3); 
\draw[postaction={decorate}] (-0.5,0.7)--(-0.1,0.3); 
\draw[postaction={decorate}] (-0.4,0.8)--(-0.1,0.5); 
\draw[postaction={decorate}] (-0.3,0.9)--(-0.1,0.7); 
\draw[postaction={decorate}] (-0.2,1)--(-0.1,0.9); 
\draw[postaction={decorate}] (-0.7,0.5)--(-0.3,0.1); 
\draw[postaction={decorate}] (-0.8,0.4)--(-0.5,0.1); 
\draw[postaction={decorate}] (-0.9,0.3)--(-0.7,0.1); 
\draw[postaction={decorate}] (-1,0.2)--(-0.9,0.1);
\end{scope}
\draw[teal,<-<](-1.3,0.1)--(-0.3,0.1)--(-0.3,0.2)--(-0.5,0.2)--(-0.5,0.3)--(-0.3,0.3)--(-0.3,0.5)--(-0.2,0.5)--(-0.2,0.3)--(-0.1,0.3)--(-0.1,1.3);

\draw[teal,>->](-0.2,1.3)--(-0.2,1.1)--(-0.6,0.7)--(-0.55,0.85)--(-0.85,0.55)--(-0.7,0.6)--(-1.1,0.2)--(-1.3,0.2);

\begin{scope}[decoration={
    markings,
    mark=at position 0.5 with {\arrow{<}}}
    ] 
\draw[postaction={decorate}] (0.6,-0.6)--(0.3,-0.3); 
\draw[postaction={decorate}] (0.5,-0.7)--(0.1,-0.3); 
\draw[postaction={decorate}] (0.4,-0.8)--(0.1,-0.5); 
\draw[postaction={decorate}] (0.3,-0.9)--(0.1,-0.7); 
\draw[postaction={decorate}] (0.2,-1)--(0.1,-0.9); 
\draw[postaction={decorate}] (0.7,-0.5)--(0.3,-0.1); 
\draw[postaction={decorate}] (0.8,-0.4)--(0.5,-0.1); 
\draw[postaction={decorate}] (0.9,-0.3)--(0.7,-0.1); 
\draw[postaction={decorate}] (1,-0.2)--(0.9,-0.1);
\end{scope}
\draw[orange,<-<](1.3,-0.1)--(0.3,-0.1)--(0.3,-0.2)--(0.5,-0.2)--(0.5,-0.3)--(0.3,-0.3)--(0.3,-0.5)--(0.2,-0.5)--(0.2,-0.3)--(0.1,-0.3)--(0.1,-1.3);

\draw[orange,<-<](0.2,-1.3)--(0.2,-1.1)--(0.6,-0.7)--(0.55,-0.85)--(0.85,-0.55)--(0.7,-0.6)--(1.1,-0.2)--(1.3,-0.2);

\end{tikzpicture}
\hspace{0.2cm}
\begin{tikzpicture}[scale=2]
\draw(-1.5,-1.5)--(-1.5,1.5)--(1.5,1.5)--(1.5,-1.5)--(-1.5,-1.5);
\draw (-1,0)--(1,0);
\draw (0,-1)--(0,1);
\draw (1.2,0)--(0,-1.2)--(-1.2,0)--(0,1.2)--(1.2,0);
\draw[-latex] (0.55,0.55)--(0.1,0.1); 
\draw[-latex] (0.45,0.65)--(0.1,0.3); 
\draw[-latex] (0.35,0.75)--(0.1,0.5); 
\draw[-latex] (0.25,0.85)--(0.1,0.7); 
\draw[-latex] (0.15,0.95)--(0.1,0.9); 
\draw[-latex] (0.65,0.45)--(0.3,0.1); 
\draw[-latex] (0.75,0.35)--(0.5,0.1); 
\draw[-latex] (0.85,0.25)--(0.7,0.1); 
\draw[-latex] (0.95,0.15)--(0.9,0.1);

\draw[-latex] (-0.55,-0.55)--(-0.1,-0.1); 
\draw[-latex] (-0.45,-0.65)--(-0.1,-0.3); 
\draw[-latex] (-0.35,-0.75)--(-0.1,-0.5); 
\draw[-latex] (-0.25,-0.85)--(-0.1,-0.7); 
\draw[-latex] (-0.15,-0.95)--(-0.1,-0.9); 
\draw[-latex] (-0.65,-0.45)--(-0.3,-0.1); 
\draw[-latex] (-0.75,-0.35)--(-0.5,-0.1); 
\draw[-latex] (-0.85,-0.25)--(-0.7,-0.1); 
\draw[-latex] (-0.95,-0.15)--(-0.9,-0.1);

\draw[-latex] (-0.55,0.55)--(-0.1,0.1); 
\draw[-latex] (-0.45,0.65)--(-0.1,0.3); 
\draw[-latex] (-0.35,0.75)--(-0.1,0.5); 
\draw[-latex] (-0.25,0.85)--(-0.1,0.7); 
\draw[-latex] (-0.15,0.95)--(-0.1,0.9); 
\draw[-latex] (-0.65,0.45)--(-0.3,0.1); 
\draw[-latex] (-0.75,0.35)--(-0.5,0.1); 
\draw[-latex] (-0.85,0.25)--(-0.7,0.1); 
\draw[-latex] (-0.95,0.15)--(-0.9,0.1);

\draw[-latex] (0.55,-0.55)--(0.1,-0.1); 
\draw[-latex] (0.45,-0.65)--(0.1,-0.3); 
\draw[-latex] (0.35,-0.75)--(0.1,-0.5); 
\draw[-latex] (0.25,-0.85)--(0.1,-0.7); 
\draw[-latex] (0.15,-0.95)--(0.1,-0.9); 
\draw[-latex] (0.65,-0.45)--(0.3,-0.1); 
\draw[-latex] (0.75,-0.35)--(0.5,-0.1); 
\draw[-latex] (0.85,-0.25)--(0.7,-0.1); 
\draw[-latex] (0.95,-0.15)--(0.9,-0.1);
\end{tikzpicture}
\caption{{\black A sketch of how the graph of $f$ on $T^2$ is transformed to the graph of $\bar f|_{T_2}$ (left) and the smash map $\zeta$ (right). }}\label{fig2:smash}
\end{figure}

\begin{figure}[!ht]
\centering
\begin{tikzpicture}[scale=2]
\draw (-1,1)--(0,1)--(0,-1)--(-1,-1)--(-1,1);
\draw (-2/3,2/3)--(0,2/3)--(0,-2/3)--(-2/3,-2/3)--(-2/3,2/3);
\draw[thick] (-2/3,0)--(2/3,0);
\node at (0.4,0.15) {\bf \scriptsize $T_2$};
\draw[thick] (-1/2,-1/2)--(1/2,1/2);
\node at (0.04,0.12) {\scriptsize $\bar 0$};
\node at (-0.9,0.9) {\scriptsize $Q'$};
\node at (-0.08,-0.6) {\scriptsize $Q$};
\draw[thick,teal] (-2/3,2/3)--(0,2/3)--(0,-2/3)--(-2/3,-2/3);
\draw[violet,thick] (0,0)--(-1/9,1/36)--(-1/18,2/36)--(-3/18,3/36)--(-1/18,4/36)--(-6/18,5/36)--(-3/18,6/36)--(-9/18,7/36)--(-2/18,8/36)--(-2/3,1/3);

\draw[red,thick] (-2/3,1/3)--(-2/3,2/3);
\draw[<-] (-0.2,0.3)--(0.3,0.5);
\node at (0.3,0.55) {\scriptsize $\beta(\ell_0)$};
\draw[violet, thick,dashed] (-2/3,0)--(0,0);
\draw[red, thick,dashed] (-2/3,0)--(-2/3,1/3);
\draw[dotted] (1,1)--(0,1)--(0,-1)--(1,-1)--(1,1);
\draw[dotted] (3/4,7/4)--(3/4,-1/4)--(-3/4,-7/4)--(-3/4,1/4)--(3/4,7/4);

\end{tikzpicture}

\caption{{\black Sketch of construction of near-homeomorphism $\bar f_{k,k+1}$ from the proof of Claim~\ref{clm:extension}}.}\label{fig:liftedf}
\end{figure}

{\black We extend the maps $\bar f_{i,n}, \varphi$ and $R_n$ of $\mathbb{D}^3$ to $\mathbb{D}^{5}$  by adding the identity map on the additional coordinate. We can view $T_n\subset \mathbb{D}^{5}$ by adding the additional coordinate $\{0\}$ to each point in $T_n$.
Consequently, each $\mathcal{P}_{n}$ is embedded in {\black $\mathbb{D}^5$} by Brown's theorem \cite{Brown1} and if we put $
\mathbb{F}_n=\{\bar 0\}\times [-1,1]^3$, then $T_n\cap \mathbb{F}_n=\{(\bar 0,0,0,0)\}$. Note that $\bar f:\mathbb{F}_n\to \mathbb{F}_n$ is a near-homeomorphism on $\mathbb{F}_n$. That is so, since $\tilde f$ from the proof of Claim~\ref{clm:extension} is identity on $\{\bar 0\}\times [-1,1]$. Therefore, by Brown's theorem \cite{Brown1} $\mathbb{F}_n$ is homeomorphic to an arc $\underleftarrow{\lim}(\mathbb{F}_n,f_{i,n})$. 
Let $\mathbb{F}_n=\underleftarrow{\lim}(\mathbb{F}_n,f_{i,n})$.
Note that $\underleftarrow{\lim}(T_n,f_{i,n})\cap \mathbb{F}_n=\{(\bar 0,0,0,0)\}^{\infty}$. Thus, $\mathbb{F}_n$ is embedded in $\mathbb{D}^5$ in such a way that $\mathbb{F}_n \cap \mathcal{P}_n$ is one point. Furthermore, $\z_{\infty,n}$ is one-to-one on $\mathbb{F}_{n+1}$ to $\mathbb{F}_{n}$ and $2-1$ everywhere else; clearly each $\mathbb{F}_{n}$ is nowhere dense in $\mathbb{D}^5$. }

Now let the homeomorphisms $\hat{R}_n:S_n\to S_n$, where $S_n=\varprojlim(\mathbb{D}^{\black 5},{\black \bar f_{i,n}}){\black\times [-1,1]}\supset \mathcal{P}_n$ for each $n\geq 1$, {\black and to have the inclusion well defined we extend the definition of $\mathcal{P}_n=\varprojlim(T_n,f_{i,n})\times \{0\}$}. {\black Let} $\mainmap:\mathbb D_\infty\to \mathbb D_\infty$, where
$\mathbb{D}_{\infty}=\prod_{i=1}^\infty S_i$, be given as follows 
$$
\hat{R}_n=(R_n,R_n,R_n,\ldots),
$$
\begin{equation}\mainmap=(\hat R_1,\hat R_2, \hat R_3,\ldots).
\end{equation}
For every $n\geq 1$ we define $\z_{\infty{\black,n}}:S_{n+1}\to S_n$ by $\z_{\infty{\black,n}}=(\z,\z,\z,\ldots)$ and observe that by definition $\z_{\infty{\black,n}}(\mathcal{P}_{n+1})=\mathcal{P}_n$. 
{\black 
Since $\z$ is a branched covering it follows that $\z_{\infty,n}$ is also a branched covering. 
For each $n\geq 1$ we also need to adjust the definition of $\mathbb{F}_n$. We put $\mathbb{F}_n=\{\bar 0\}\times [-1,1]^4$.} 
Clearly, each point $x\in S_n\setminus {\black \mathbb{F}_n}$ is $2^n$-periodic for $\hat{R}_n$ {\black and any other point is fixed by $\hat{R}_n$. Since $\varphi_{\infty,n}$ is {\black one-to-one} on $\mathbb{F}_{n+1}$, if we put $0_\infty=\prod^{\infty}_{n=1} \mathbb{F}_n$, then $0_{\infty}\subset \mathbb{D}_{\infty}$. Let  $\Ps_\infty=\varprojlim (\Ps_n,\varphi_{\infty,n})$ and note that $\Ps_\infty\cap 0_{\infty}$ is a single point.}

{\black \begin{claim}\label{lem:embeddP}
 For every $\eps>0$ there exists an $\eps$-cover $\mathcal{W}$ of $\mathcal{P}_n$ in $\mathbb{D}^{6}$ by tamely embedded $6$-cubes
such that if $Z\subset \mathcal{P}_n$ is a continuum and if $d_H(Z,0_n)>\eps$ then there exists a chain subcover $\mathcal{W}'\subset \mathcal{W}$ of $Z$. If we fix $p\in \mathcal{P}_n$ at the beginning, then we may require that $p$ belongs only to one element of the cover $\mathcal{W}$.
\end{claim}

\begin{proof}[Proof of Claim~\ref{lem:embeddP}]
It is enough to show that the lemma holds for $\mathcal{P}_1$. 
 For $\mathcal{P}_n$ the construction is almost the same, with the only difference that sets $D_k$ bellow will follow $T_n$-like pattern with only one element over branch point of $T_n$.

We start in $\mathbb{D}^3$. Fix $\eps>0$. Let us cover the arc $T_1=\{(x,y):x\in[-1,1],y=0,z=0\}$ with a chain of cells $\{D_0,\ldots D_{j}\}$ in $\mathbb{D}^3$ so that $T_1\in \textrm{int} (D_0\cup\ldots \cup D_{j})$. For example we can set 
$$D_0=\Bigg[-1-\frac{1}{j-1},-1+\frac{1}{j-1}\Bigg]\times [2^{-j},-2^{-j}]\times [2^{-j},-2^{-j}],$$ 
$$D_1=\Bigg[-1+\frac{1}{j-1},-1+\frac{3}{j-1}\Bigg]\times [2^{-j},-2^{-j}]\times [2^{-j},-2^{-j}],$$
$$\vdots$$
$$\hspace{-0.35cm}D_{j-1}=\Bigg[-1+\frac{2j-5}{j-1},-1+\frac{2j-3}{j-1}\Bigg]\times [2^{-j},-2^{-j}]\times [2^{-j},-2^{-j}],$$
$$D_{j}=\Bigg[-1+\frac{2j-3}{j-1},-1+\frac{2j-1}{j-1}\Bigg]\times [2^{-j},-2^{-j}]\times [2^{-j},-2^{-j}].$$
It is clear that we can select sets $D_k$ in such a way that if $p\in \mathcal{P}_1$ is fixed, then $p_0$ belongs to a unique $D_i$.
Now fix $k\in\{0,...,j\}$ and $i\in\mathbb{N}$ and consider $D_k^{\leftarrow}=\pi_{i,1}^{-1}(D_k)$. Observe that $\mathcal{P}_1$ as preliminary defined in Claim~\ref{PSS} is contained in $\textrm{int} (D_0\cup\ldots \cup D_{j})$.
Since the bonding maps $\bar f_{1,1},\bar f_{2,1},...,\bar f_{i,1}$ are near-homeomorphisms of $\mathbb{D}^3$, they are cell-like\footnote{\black For the definition of cell-like and cellular, the reader is referred to the paper of Edwards \cite{Edwards80}.}  \cite[Proposition p.116]{Edwards80}, see also \cite[Theorem 1.2.(c)]{Lacher}. Hence, since $D_k$ is cell-like, the pre-image $\bar f_{i,1}^{-1}\circ...\circ \bar f_{1,1}^{-1}(D_k)$ is cell-like as well \cite[Theorem 1.4]{Lacher}. Since $D_k^{\leftarrow}$ is the intersection of a countable nested family of cell-like sets, it is also cell-like \cite[Remark (3) p.113]{Edwards80} in $\mathbb{D}^3$, and hence it can be regarded as a cell-like subset of $\mathbb{D}^3$. By \cite[Theorem 8]{McMillan} and \cite[Corollary 2]{Lacher1} space $D_k^{\leftarrow}\times \{0\}^2$ is cellular in $\mathbb{D}^5=\mathbb{D}^3\times [-1,1]^2$.

Since $\bar f_{i,1}$ is a near-homeomorphism, choosing $i$ and $j$ large enough we can ensure that $D_k^{\leftarrow}$ has diameter less than $\eps/2$ for each $k\in\{0,\ldots,j\}$. By cellularity of $D_k^{\leftarrow}$, for any $\delta<\eps/2$ there exist topological 5-cubes $C_k$ around $D_k^{\leftarrow}$ in $\mathbb{D}^5$, each point of which is not further away than $\delta$ from $D_k^{\leftarrow}$. If $\delta$ is chosen small enough, then $C_k\cap C_{k'}\neq\emptyset$ if and only if $D_k^{\leftarrow}\cap D_{k'}^{\leftarrow}\neq\emptyset$ for any $k\neq k'$. Because every topological cube in $\mathbb{R}^n\times\{0\}$ is flat in $\mathbb{R}^{n+1}$ 
(see e.g. p.66 in \cite{DevVen}) we have a homeomorphism $\Theta:\mathbb{D}^6\to \mathbb{D}^6$ such that $\Theta(C_k\times \{0\})=\mathbb{D}^5\times \{0\}$. Put $\hat C_k=\Theta^{-1}(\mathbb{D}^5\times [-\delta,\delta])$ and note that if $\delta$ is sufficiently small then $ C_k\cap C_{k'}\neq\emptyset$ if and only if $\hat C_k\cap \hat C_{k'}\neq\emptyset$ for any $k\neq k'$. If $\delta$ is sufficiently small, then only one of these sets intersects $\mathbb{F}_n$. It is also clear that $\mathcal{P}_1$ is contained in $ \textrm{int}(\hat C_0)\cup\ldots \cup \textrm{int}(\hat C_{j})$. This completes the proof of this claim.
\end{proof}}
\begin{claim}\label{claim3}
 $\hat R_\infty|_{\mathcal{P}_\infty}$ has a unique fixed point $0_\infty{\black \cap \mathcal{P}_{\infty}}$, and for every $z\in \mathcal{P}_\infty\setminus \{0_\infty\}$ we have that $\hat R_\infty|_{\omega(\hat R_\infty,z)}$ 
is a $2$-adic odometer.
\end{claim}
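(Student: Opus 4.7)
My plan is to exploit the fact that $\hat R_\infty$ is assembled coordinate-by-coordinate from the rigid rotations $R_n$, so that everything reduces to very concrete behavior of finite orbits in each $S_n$, and then to identify the inverse limit of these orbits with $\mathbb{Z}_2$. First I would dispatch the fixed point statement: $R_n$ is a rotation of $\mathbb{D}^2$ by $2\pi/2^n$ with unique fixed point $\bar 0$, so a point $(x_i)_i\in S_n$ is fixed by $\hat R_n=(R_n,R_n,\ldots)$ iff every coordinate equals $\bar 0$, giving only $0_n$. Then $(z_n)_n\in \mathbb{D}_\infty$ is fixed by $\hat R_\infty=(\hat R_1,\hat R_2,\ldots)$ iff each $z_n=0_n$, i.e. iff it equals $0_\infty$. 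Since $0_\infty\in \mathcal{P}_\infty$, this is the unique fixed point on $\mathcal{P}_\infty$ as well.

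For the second part I would first record the branch-point dichotomy: $\varphi^{-1}(\bar 0)=\{\bar 0\}$ lifts to $\varphi_\infty^{-1}(0_n)=\{0_{n+1}\}$ inside $S_{n+1}$, so if a single coordinate $z_n$ equals $0_n$, then by compatibility $z_m=0_m$ for every $m$. Hence for $z\in \mathcal{P}_\infty\setminus\{0_\infty\}$ we have $z_n\ne 0_n$ for every $n$, and by the remark just before the claim each $z_n$ is $2^n$-periodic for $\hat R_n$. Writing $O_n:=\{\hat R_n^j(z_n):0\le j<2^n\}$, the intertwining $\varphi\circ R_{n+1}=R_n\circ \varphi$ lifts to $\varphi_\infty\circ \hat R_{n+1}=\hat R_n\circ \varphi_\infty$, so $\varphi_\infty$ sends $O_{n+1}$ $2$-to-$1$ onto $O_n$ and commutes with the cyclic shifts.

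The core step is to identify $\Omega:=\varprojlim\bigl(O_n,\varphi_\infty|_{O_{n+1}}\bigr)\subset \mathbb{D}_\infty$ with the $2$-adic integers. Labeling $\hat R_n^j(z_n)\leftrightarrow j\in \mathbb{Z}/2^n\mathbb{Z}$ turns $\hat R_n|_{O_n}$ into $+1$ and, since $\varphi_\infty$ commutes with the shifts and is $2$-to-$1$ onto, it becomes the canonical reduction $\mathbb{Z}/2^{n+1}\mathbb{Z}\to \mathbb{Z}/2^n\mathbb{Z}$. Thus $\Omega\cong \mathbb{Z}_2$ and $\hat R_\infty|_\Omega$ is conjugate to the $+1$ map, i.e. to $\phi_\mathbb{P}$. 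Because $z\in \Omega$ by construction, $\Omega$ is closed and $\hat R_\infty$-invariant, and the $2$-adic odometer is minimal, every orbit in $\Omega$ is dense, giving $\omega(\hat R_\infty,z)=\Omega$.

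I do not foresee a deep obstacle here: the claim is essentially a consequence of the tower structure of $(S_n,\hat R_n,\varphi_\infty)$. The only place to be careful is the coordinate-by-coordinate bookkeeping establishing that $z\ne 0_\infty$ forces $z_n\ne 0_n$ for every $n$ (so that genuine periodicity is available on each level) and verifying that the labeling of $O_n$ by $\mathbb{Z}/2^n\mathbb{Z}$ is compatible with the bonding $\varphi_\infty$; once these are in place, conjugacy with $\phi_\mathbb{P}$ and the equality $\omega(\hat R_\infty,z)=\Omega$ follow immediately from minimality of the odometer.
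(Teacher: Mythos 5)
Your proposal is correct and takes essentially the same route as the paper, which simply observes that $0_\infty$ is fixed and that for $z\ne 0_\infty$ each coordinate $z_n$ lies on a $2^n$-periodic orbit $O_n$, from which the odometer structure is declared ``clear.'' You have merely filled in the details the paper leaves implicit (the branch-point dichotomy forcing $z_n\ne 0_n$ for all $n$, the compatibility of the $\mathbb{Z}/2^n\mathbb{Z}$ labeling with $\varphi_\infty$, and the final appeal to minimality to identify $\omega(\hat R_\infty,z)$ with the whole inverse limit of orbits).
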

\begin{proof}[Proof of Claim~\ref{claim3}]
{\black By definition each $z\in 0_\infty$ is a fixed point of $\hat R_\infty$ and $0_\infty{\black \cap \mathcal{P}_{\infty}}$ is a single point. If $z\in \mathcal{P}_\infty$ and $z\not\in 0_\infty$} then $z_n$ is in a $2^n$-periodic orbit $O_n$ of $\hat R_n$,
so $z$ is clearly a point in $2$-adic odometer.
\end{proof}

\begin{claim}\label{claim4}
$\mathcal{P}_\infty=\varprojlim(\mathcal{P}_n,\z_\infty|_{\mathcal{P}_n}\}$ is a pseudo-arc.
\end{claim}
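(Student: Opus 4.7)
The plan is to invoke Bing's topological characterization of the pseudo-arc as the unique nondegenerate hereditarily indecomposable arc-like continuum. Nondegeneracy and the continuum property of $\mathcal{P}_\infty$ are immediate from its presentation as an inverse limit of nondegenerate pseudo-arcs with surjective bonding maps $\z_\infty|_{\mathcal{P}_{n+1}}$, so I only need to verify hereditary indecomposability and arc-likeness, and then appeal to Bing's theorem.

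For hereditary indecomposability, I would use the standard inverse-limit argument that an inverse limit of hereditarily indecomposable continua is hereditarily indecomposable. Each $\mathcal{P}_n$ is a pseudo-arc by Claim~\ref{PSS}, hence hereditarily indecomposable. If a subcontinuum $C\subseteq\mathcal{P}_\infty$ were to admit a proper decomposition $C=A\cup B$, then for every $n$ the image $\pi_n(C)=\pi_n(A)\cup\pi_n(B)$ would be a decomposition of the indecomposable subcontinuum $\pi_n(C)\subseteq\mathcal{P}_n$, forcing $\pi_n(A)=\pi_n(C)$ or $\pi_n(B)=\pi_n(C)$ for each $n$. The set $\{n:\pi_n(A)=\pi_n(C)\}$ is downward closed via $\pi_{n-1}=\z_\infty\circ\pi_n$, as is its counterpart for $B$; since the two sets cover $\mathbb{N}$ and both are downward closed, one of them equals $\mathbb{N}$. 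Then, using that closed subsets of an inverse limit equal the intersection of their cylinder sets, $C\subseteq\bigcap_n\pi_n^{-1}(\pi_n(C))=\bigcap_n\pi_n^{-1}(\pi_n(A))=A$, contradicting the properness of $A$.

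For arc-likeness, I would produce, for each $\varepsilon>0$, an $\varepsilon$-map $\mathcal{P}_\infty\to[0,1]$ by composing the $\delta$-map $p_{k,n}\colon\mathcal{P}_n\to[0,1]$ from the proof of Claim~\ref{PSS} (which becomes arbitrarily fine as $k\to\infty$) with the coordinate projection $\pi_n\colon\mathcal{P}_\infty\to\mathcal{P}_n$. In the product metric on $\mathcal{P}_\infty$, fibers of $p_{k,n}\circ\pi_n$ split into a tail contribution from coordinates $m>n$, bounded by $\sum_{m>n}2^{-m}\diam(\mathcal{P}_m)$, and a head contribution from coordinates $m\leq n$, bounded using the Lipschitz constant of $\z_\infty$ and the mesh of $p_{k,n}^{-1}$. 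Choosing $n$ large enough for the tail to be below $\varepsilon/2$, and then $k$ large enough (depending on $n$ and the Lipschitz constant) for the head to be below $\varepsilon/2$, gives the required $\varepsilon$-map, so $\mathcal{P}_\infty$ is arc-like.

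The main technical point is the arc-likeness step: iterating the branched cover $\z_\infty$ inflates distances, so the decay of the product metric weights must dominate the Lipschitz blow-up, which forces the order of quantifiers---first fix $n$, then choose $k$. Since arc-likeness is a topological property, an equivalent route is to pull back the chain covers of topological disks provided by Claim~\ref{lem:embeddP} to $\mathcal{P}_\infty$ and verify that their mesh in any compatible metric can be made arbitrarily small. With both hereditary indecomposability and arc-likeness in hand, Bing's characterization identifies $\mathcal{P}_\infty$ with the pseudo-arc, completing the proof.
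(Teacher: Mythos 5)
Your proof is correct and follows the same route as the paper's: establish hereditary indecomposability and arc-likeness of the inverse limit and then invoke Bing's characterization of the pseudo-arc. The paper simply delegates both verifications to Reed's theorem on inverse limits of indecomposable arc-like continua, whereas you spell out the two standard arguments directly (and, incidentally, the Lipschitz worry is moot here since $\z$ is distance-nonincreasing by construction).
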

\begin{proof}[Proof of Claim~\ref{claim4}]
Since each $\mathcal{P}_n$ is a pseudo-arc, the inverse limit of them $\mathcal{P}_\infty$ is also a pseudo-arc, as it must be both arc-like and hereditarily indecomposable, see \cite{Reed}.
\end{proof}

The proof of Theorem~\ref{odo} is now complete.
\end{proof}


\begin{equ}[!ht]
\begin{equation*}
\xymatrix{
&\ar@{--}[rr]^{}\ar@{--}[dl]_{}\ar@{--}[dd]|!{[d];[d]}\hole &&  
\ar@{--}[rr]^{}\ar@{--}[dl]_{} \ar@{--}[dd]|!{[d];[d]}\hole &&
\ar@{--}[rr]^{}\ar@{--}[dl]_{} \ar@{--}[dd]|!{[d];[d]}\hole && \ar@{--}[rr]\ar@{--}[dd]\ar@{--}[dl] && 
\Ps_\infty \ar@{<-}[dl]_{\mainmap|_{\Ps_\infty}}\ar@{--}[dd]\\
\ar@{--}[rr]^{}\ar@{--}[dd]^{}&&\ar@{--}[rr]^{}\ar@{--}[dd]&& \ar@{--}[rr]^{}
\ar@{--}[dd]^(.35){}|!{[d];[d]}\hole &&
\ar@{--}[rr]\ar@{--}[dd] && \Ps_\infty \ar@{--}[dd]\\
&\ar@{->}[dd]^(.35){\z|_{T_4}}\ar@{--}[dl]\ar@{--}[rr]\hole&&\ar@{->}[dd]^(.35){\z|_{T_4}}\ar@{--}[dl]\ar@{--}[rr]\hole &&\ar@{->}[dd]^(.35){\z|_{T_4}}\ar@{--}[dl]\ar@{--}[rr]\hole && \ar@{}[rr]\ar@{--}[rr] \ar@{--}[dd]\ar@{--}[dl]&& \ar@{->}[dd]^{\z_{\infty}|_{\Ps_4}}\ar@{--}[dl]\\
\ar@{->}[dd]^(.35){\z|_{T_4}}\ar@{--}[rr]&&\ar@{->}[dd]^{}\ar@{--}[rr]&&
\ar@{->}[dd]^(.35){}\ar@{--}[rr]\hole &&
\ar@{--}[rr] \ar@{--}[dd]&&  \ar@{->}[dd]^(0.35){\z_{\infty|_{\Ps_4}}}\\
&T_3\ar@{<-}[rr]^{f_{0,3}}\ar@{<-}[dl]_{R_3}\ar@{->}[dd]|!{[d];[d]}\hole&&T_3 
\ar@{<-}[rr]^{f_{1,3}}\ar@{<-}[dl]_{R_3} \ar@{->}[dd]|!{[d];[d]}\hole &&T_3 
\ar@{<-}[rr]^{f_{2,3}}\ar@{<-}[dl]_{R_3} \ar@{->}[dd]|!{[d];[d]}\hole && \ar@{--}[rr]\ar@{--}[dd] \ar@{--}[dl]&& \Ps_3\ar@{<-}[dl]_{\hat R_3|_{\Ps_3}}\ar@{->}[dd]^(.35){\z_{\infty}|_{\Ps_3}}\\
T_3\ar@{<-}[rr]^{f_{0,3}}\ar@{->}[dd]^{\z|_{T_3}}&&T_3\ar@{<-}[rr]^{f_{1,3}}\ar@{->}[dd]&&T_3 \ar@{<-}[rr]^{f_{2,3}}
\ar@{->}[dd]^(.35){\z|_{T_3}}|!{[d];[d]}\hole &&
\ar@{--}[rr]\ar@{--}[dd] && \Ps_3 \ar@{->}[dd]^(.35){\z_{\infty}|_{\Ps_3}}\\
&T_2\ar@{<-}[rr]|!{[r];[r]}\hole\ar@{<-}[dl]_{R_2} 
\ar@{->}[dd]|!{[d];[d]}\hole&&T_2 \ar@{<-}[rr]|!{[r];[r]}\hole
\ar@{<-}[dl]_{R_2}\ar@{->}[dd]|!{[d];[d]}\hole
&&T_2\ar@{<-}[rr]^{f_{2,2}}\ar@{<-}[dl]_{R_2}\ar@{->}[dd]^{}|!{[d];[d]}\hole && \ar@{--}[rr]\ar@{--}[dd]\ar@{--}[dl] && \Ps_2\ar@{<-}[dl]_{\hat R_2|_{\Ps_2}}\ar@{->}[dd]^(.35){\z_{\infty}|_{\Ps_2}}\\
T_2\ar@{<-}^(.65){f_{0,2}}[rr]\ar@{->}[dd]^{\z|_{T_2}}&&T_2\ar@{->}[dd]
\ar@{<-}^(.65){f_{1,2}}[rr]&&T_2\ar@{<-}[rr]^{f_{2,2}}\ar@{->}[dd]^(.35){\z|_{T_2}}\hole && \ar@{--}[rr]\ar@{--}[dd] && \Ps_2\ar@{->}[dd]^(.35){\z_{\infty}|_{\Ps_2}}\\
&T_1\ar@{<-}[rr]_(.35){f_{0,1}}|!{[r];[r]}\hole\ar@{<-}[dl]^{R_1}&&T_1 
\ar@{<-} [rr]_(.35){f_{1,1}}|!{[r];[r]}\hole\ar@{<-}[dl]^{R_1}&&T_1\ar@{<-}[rr]^{f_{2,1}}
\ar@{<-}[dl]^{R_1}\hole && \ar@{--}[rr]\ar@{--}[dl] && \Ps_1 \ar@{<-}[dl]_{\hat R_1|_{\Ps_1}}\\
T_1\ar@{<-}[rr]^{f_{0,1}}&&T_1\ar@{<-}[rr]^{f_{1,1}}&&T_1 \ar@{<-}[rr]^{f_{2,1}}\hole &&  \ar@{--}[rr] && \Ps_1}
\end{equation*}
\caption{Diagram explaining the construction from the proof of Theorem~\ref{odo}.}\label{megadiagram}
\end{equ}

\section{The Denjoy-Rees technique}\label{sec:DRtechnique}
In this section we will give conditions we want to have fulfilled in our construction of pseudo-arc homeomorphismsms with arbitrary positive topological entropy. Namely, we will implement the conditions given in \cite{Cro} in the inverse limit technique. All the conditions that we state in this section will be realized through an inductive construction later in the following section.  We encourage the reader to familiarize with basic ideas from \cite{Cro} before going into this and the subsequent section. To be in the correspondence with \cite{Cro} and avoid potential confusion we will keep the notation for analogous conditions as in \cite{Cro}.

For a family $\E$ of closed sets of $\mathbb{D}_{\infty}$ define
\begin{equation}
\mesh (\E)=\max \{  \diam (X) \big | X\in \E\}.
\end{equation}
For the family $\E$ 
denote by 
$\rs(\E)$ 
the union of all the elements of 
$\E$, called the \textit{realization} of $\mathcal E$.
\begin{definition}
For an integer $q\geq 1$, 
a finite family $\E$
is \textit{$q$-iterable} if for any 
 $X,Y\in \E$ and  
integers $-q\leq k,s\leq q$, 
either  $\mainmap^k(X)=\mainmap^s(Y)$
or $\mainmap^k(X)\cap \mainmap^s(Y)=\emptyset$. 
\end{definition}
For any $q$-iterable family 
$\E^0$ and any $0\leq n\leq q$,
we denote
\begin{equation}
\E^n=\bigcup_{|k|\leq n} \mainmap^k(\E^0),
\end{equation}
where 
$\mainmap(\E^0)=\{\mainmap(X) \big | X\in \E^0\}$. 
For any $0\leq n < q$,
define an {\em oriented graph} $\mathcal G(\E^n)$, 
where the vertices
 are elements of $\E^n$,
 and there is an edge from $X$ to $Y$
if and only if $\mainmap(X)=Y$, {\black see Figure 3 in \cite{Cro}}. 

Furthermore, 
we say that 
$\E^n$ \textit{has no cycle} if the graph $\mathcal G(\E^n)$ has no cycle.

\begin{definition}
Let $\E,\F$ be two finite families of closed subsets of $\mathbb{D}_\infty$.
We say \emph{$\F$ refines $\E$} if the following conditions hold.
\begin{enumerate}
\item every element of $\E$ contains at least one element of $\F$.
\item for any 
$X\in \E$, $Y\in \F$,
 either $X\cap Y=\emptyset$ or $Y\subset \Int (X)$.
\end{enumerate}
\end{definition}

\begin{definition}
Let $\E,\F$ be 
two families as above.
For an integer $q\geq 0$,
we say 
 \emph{$\F$ is compatible with $\E$ for $q$ iterates},
 if the following conditions hold.
 \begin{enumerate}
 \item $\E$ is $q$-iterable.
 \item  $\F$ is $(q+1)$-iterable.
 \item  $\rs (\F) \subset \rs (\E)$, and $\F^{q+1}$ refines $\E^q$. 
 \item  For every $k$ with $|k|\leq q$, 
  \begin{equation}
  \rs(\F^{q+1}) \cap \mainmap^k( \rs(\E) ) = \mainmap^k( \rs(\F) ).
  \end{equation}
\end{enumerate}
\end{definition}

Let an increasing sequence of non-negative integers $\{k_n\}_{n\geq 0}$ be given (we will specify it after Lemma~\ref{lem:choiceofKinA}) and
let $\pi_n\colon \mathbb{D}_\infty \to S_n$ denote the projection onto $n$-th coordinate of $\mathbb{D}_\infty$.
In what follows $\{\E^0_{(n)}\}_{n\geq 0}$ denotes a sequence with the following properties (see {\black Theorem~\ref{odo}} to recall the definition of $\mathbb{F}_{n}$):
\begin{enumerate}
    \item Each $X\in \E^0_{(n)}$ is defined by $\pi_{k_n}^{-1}(D)$
    where $D$ is a closed cube tamely embedded in $S_{k_n}\setminus\mathbb{F}_{k_n}$.
    \item Each $\E^0_{(n)}$ is finite.
\end{enumerate}
Because we are using $2$-to-$1$ branch coverings, note that topologically we can view each $\pi_{k_n}^{-1}(D)$ as $D\times C$
where $C$ is the Cantor set.

The following list of hypotheses 
is an extension of analogous ones in \cite[p.262]{Cro}
stated for families of closed discs.
\begin{enumerate}
\item [$\mathbf{A_1}$.] The following assertions hold:
       \begin{enumerate}
	\item For every $n \geq 0$ the family $\E_{(n)}^0$ is 
	$(n+1)$-iterable and the graph $\mathcal G(\E_{(n)}^n)$ has no cycle.
	\item For every $n > 0$ the family $\E^{n+1}_{(n)}$ refines $\E_{(m)}^{m+1}$ for any $0\leq m <n$.
	\item For every $n> 0$ the family $\E_{(n)}^0$ is compatible with $\E_{(n-1)}^0$ for $n$ iterates.
       \end{enumerate}
\item [$\mathbf{A_2}$.]  For every $n\geq 0$ and every element $X\in \E_{(n)}^0$ there are at least two distinct elements of 
$\E_{(n+1)}^0$ contained in $X$.

\item [$\mathbf{A_3}$.]  The following holds:
\begin{equation}
\lim_{n\to \infty}\mesh (\E_{(n)}^n) = 0.
\end{equation}
\end{enumerate}

Let $(\Lambda,\mainmap)$ be the  $2$-adic odometer provided by Theorem~\ref{odo}. From now on we shall identify $\Lambda$ and $\prod_{i=0}^\infty\{0,...,2^{i}-1\}$.
The following fact is true for any minimal system by proper application of Kakutani-Rohlin partitions (see Proposition 2.10 in \cite{Cro}). Since we deal with odometers whose structure is known, the situation is simpler, because we know exactly the structure of our map. The choice of the point $p\in\Lambda$ below is crucial to our construction.
\begin{lem}\label{lem:choiceofKinA}
	Let $(\Lambda,\mainmap)$ be the  $2$-adic odometer provided by Theorem~\ref{odo}, where $\Lambda$ is assumed to be endowed with the lexicographic order and let $\mu$ be the Haar measure on $\Lambda$. For any sequence
	$\{\hat{k}_n\}_{n\geq 0}$ of positive integers, there exists a subsequence $\{{k}_n\}_{n\geq 0}$, a sequence $\{{s}_n\}_{n\geq 0}$ of positive integers, and a Cantor set $K\subset \Lambda$ with $\mu(K)>0$ such that:
	\begin{enumerate}
		\item\label{lem:choiceofKinA:1} $K\cap \mainmap (K)=\emptyset$,
		\item\label{lem:choiceofKinA:2} if $W_0,\ldots, W_{2^{k_n}-1}$ is a periodic decomposition of $\Lambda$ {\black using $\mainmap$} given by {\black the} cylinder sets, with 
$p=\min K \in W_0$, where minimum is taken
  with respect to the lexicographic order,
		then 
		$K\subset \bigcup_{i=0}^{s_n} \mainmap^i(W_0)$,
		\item\label{lem:choiceofKinA:3} $k_{n+1}>2^{k_n}+3n$ and  $s_n+2n+2<2^{k_n}$ for each $n$, and 
		\item\label{lem:choiceofKinA:4}  the set $\{a\in \mathbb{N} : \text{there is } y\in K, y_{k_{n+1}}=a, y_{k_n}=x_{k_n}\}$ has at least two elements for each $n$ and each $x\in K$.
	\end{enumerate}
\end{lem}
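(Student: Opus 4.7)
The plan is to build $K$ explicitly as a subset of $\Lambda$ cut out by one ``parity'' condition (which forces $K\cap\mainmap(K)=\emptyset$) together with a nested family of ``low-floor'' cylinders (which gives~(\ref{lem:choiceofKinA:2})). The super-exponential gap imposed by~(\ref{lem:choiceofKinA:3}) leaves such large margin in every estimate that branching~(\ref{lem:choiceofKinA:4}) and positive measure come almost for free.

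Extract a subsequence $\{k_n\}_{n\geq 0}$ of $\{\hat k_n\}$ with $k_{n+1}>2^{k_n}+3n$ and $k_0$ large enough that $\sum_{n\geq 0}(n+2)\,2^{-k_n}<1/4$; this is possible because $\hat k_n\to\infty$. Set $s_n:=2^{k_n}-2n-3$, so $s_n+2n+2<2^{k_n}$, verifying~(\ref{lem:choiceofKinA:3}). Under the paper's identification, a point of $\Lambda$ is a compatible sequence $(y_i)_{i\geq 0}$ with $y_i\in\{0,\dots,2^i-1\}$ and $y_{i+1}\equiv y_i\pmod{2^i}$, the odometer acts by adding $1$ with carry, and the level-$k_n$ cylinders are $\{y:y_{k_n}=a\}$ of Haar measure $2^{-k_n}$. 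Define
\begin{equation*}
K := \{\, y\in\Lambda : y_1=0 \text{ and } y_{k_n}\leq s_n \text{ for every } n\geq 0\,\}.
\end{equation*}
The origin $0$ lies in $K$ and every $y\in K$ has $y_1=0$, so $p=0$ is the lex minimum of $K$ and $W_0=\{y_{k_n}=0\}$, $\mainmap^iW_0=\{y_{k_n}=i\}$ at each level. Then~(\ref{lem:choiceofKinA:2}) is immediate, and~(\ref{lem:choiceofKinA:1}) holds because adding $1$ flips $y_1$, so $\mainmap(K)\subset\{y_1=1\}$ is disjoint from $K$. A union bound on $\{y_{k_n}>s_n\}$ gives $\mu(K)\geq 1/2-\sum_n(n+2)\,2^{-k_n}>1/4$.

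For condition~(\ref{lem:choiceofKinA:4}), fix $x\in K$ and $n\geq 0$, and consider candidates $a=y_{k_{n+1}}$ for $y\in K$ with $y_{k_n}=x_{k_n}$. The compatibility constraints $a\equiv x_{k_n}\pmod{2^{k_n}}$ and $a\leq s_{n+1}$ admit at least $2^{k_{n+1}-k_n}-2$ values of $a$, each automatically even (since $x_{k_n}$ is even as $x_1=0$, and $2^{k_n}$ is even). Conditioning on $y_{k_{n+1}}=a$, each remaining constraint $y_{k_m}\leq s_m$ for $m>n+1$ excludes at most a $2^{k_{n+1}-k_m}$ fraction of the remaining freedom (the forbidden window has length $2m+2\ll 2^{k_{n+1}}$), hence
\begin{equation*}
\mu\bigl(\{y\in K:y_{k_{n+1}}=a\}\bigr)\geq 2^{-k_{n+1}}\Bigl(1-\sum_{m>n+1}2^{k_{n+1}-k_m}\Bigr)>0.
\end{equation*}
So each of the $\geq 2$ candidates $a$ is realized by some $y\in K$, giving~(\ref{lem:choiceofKinA:4}); the same branching shows $K$ is perfect, and together with being closed and totally disconnected this makes $K$ a Cantor set. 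The only delicate point is the uniform control of branching and measure across scales simultaneously, but the tower-type growth from~(\ref{lem:choiceofKinA:3}) provides comfortable margins in every bound.
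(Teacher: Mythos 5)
Your construction solves the lemma and rests on the same basic idea as the paper's---remove small bands from $\Lambda$ at the scales $k_n$---but the implementation differs in two respects. The paper sets $K=\Lambda\setminus\bigcup_{n\geq0}\Lambda_n$ with $\Lambda_0=\{x_{k_0}\neq0\}$ and, for $n\geq1$, $\Lambda_n$ a band of width about $8(n+1)2^{k_{n-1}}$ \emph{centered at} $2^{k_n-1}$, and it handles condition (1) by passing to one of the two odometers inside $(\Lambda,\mainmap^2)$. You instead delete a short band of length $2n+2$ \emph{from the top end} of each scale (the condition $y_{k_n}\leq s_n$) and obtain condition (1) directly from the parity constraint $y_1=0$, which is cleaner and avoids the detour through $\mainmap^2$. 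Both approaches produce a positive-measure Cantor set with the required branching, using the super-exponential gap $k_{n+1}>2^{k_n}$ in the same way. One caveat worth flagging: the paper's placement of the deleted blocks around the centre $2^{k_n-1}$ is not arbitrary---the explicit inequalities defining $\Lambda_n$ are quoted later in the verification of condition $\mathbf{A_{1.c}}$ in the proof of Lemma~\ref{lem:main}, so your $K$, while perfectly good for the statement of Lemma~\ref{lem:choiceofKinA}, is not a drop-in replacement for the paper's $K$ in the rest of the argument.

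Two small slips. The set $\{y_{k_n}>s_n\}$ has $2^{k_n}-1-s_n=2n+2$ elements at level $k_n$, so the coefficient in your union bound should be $2n+2$, not $n+2$; the conclusion survives. More substantively, in the branching estimate you assert that the forbidden window of length $2m+2$ is $\ll 2^{k_{n+1}}$ for every $m>n+1$, concluding that it meets at most one residue class modulo $2^{k_{n+1}}$; but for $m$ large enough relative to $n$ the length $2m+2$ eventually exceeds $2^{k_{n+1}}$, so the displayed inequality $\mu(\{y\in K:y_{k_{n+1}}=a\})\geq 2^{-k_{n+1}}\bigl(1-\sum_{m>n+1}2^{k_{n+1}-k_m}\bigr)$ is not justified as written. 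The conclusion is still correct: either bound the bad mass at level $m$ by $(2m+2)2^{-k_m}$ and use the tower growth $k_m>2^{k_{m-1}}$ to see that $\sum_{m>n+1}(2m+2)2^{-k_m}<2^{-k_{n+1}}$, or dispense with measure here and build the required $y$ inductively, e.g.\ by taking $y_{k_m}:=y_{k_{m-1}}$ (hence $y_{k_m}<2^{k_{m-1}}\leq s_m$) for every $m>n+1$.
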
\label{lem:setK}
\begin{proof}
	Since $(\Lambda,\mainmap)$ is 2-adic, we may forget for a moment about the condition $K\cap \mainmap (K)=\emptyset$, because we can always provide the construction for one of the odometers in $(\Lambda,\mainmap^2)$.

	Since $\mu$ gives the same mass to each cylinder defined by element of $\Z_{2^n}$, removing
	$k$ consecutive (in the sense of addition $+1$ in $n$-th coordinate) cylinders subtracts the mass of $k/2^n$. 
	Let $\Lambda_0=\{x\in \Lambda : x_{k_0}\neq 0\}$ 
	and for $n\geq 1$
	\begin{equation}
	    \Lambda_n=\{x\in \Lambda\setminus \Lambda_{n-1}:2^{k_n-1}-4(n+1)2^{k_{n-1}} \leq x_{k_n}\leq 2^{k_n-1}+4(n+1)2^{k_{n-1}}\}\label{def:AnK}
	\end{equation} and $K=\Lambda\setminus \bigcup_{n\geq 0} \Lambda_n$. By the structure of $\Lambda_n$ we obtain a useful {\black accumulation of holes in the sense that if $x\in \Lambda$ and $x_n=0$ for some $n>0$ then $x$ will not return to $K$ before $4(n+1)2^{k_{n-1}}$ iterations of $\mainmap$ neither backward nor forward}, see Fig.~\ref{fig:Sketch}.
	
	\begin{figure}[!ht]
	\centering
	\begin{tikzpicture}[scale=1.8]
	\draw[thick, domain=0:360] plot ({0.125*cos(\x)}, {0.875+0.125*sin(\x)});
	\draw[domain=0:360] plot ({0.025*cos(\x)}, {0.875+0.025*sin(\x)});
	\draw[thick, domain=0:360] plot ({1+0.125*cos(\x)}, {0.875+0.125*sin(\x)});
	\draw[domain=0:360] plot ({1+0.025*cos(\x)}, {0.875+0.025*sin(\x)});
	\draw[domain=0:360] plot ({0.25+0.025*cos(\x)}, {0.875+0.025*sin(\x)});
	\draw[domain=0:360] plot ({0.5+0.025*cos(\x)}, {0.875+0.025*sin(\x)});
	\draw[domain=0:360] plot ({0.75+0.025*cos(\x)}, {0.875+0.025*sin(\x)});
	
	\draw[thick, domain=0:360] plot ({0.125*cos(\x)}, {-1+0.875+0.125*sin(\x)});
	\draw[domain=0:360] plot ({0.025*cos(\x)}, {-1+0.875+0.025*sin(\x)});
	\draw[thick, domain=0:360] plot ({1+0.125*cos(\x)}, {-1+0.875+0.125*sin(\x)});
	\draw[domain=0:360] plot ({1+0.025*cos(\x)}, {-1+0.875+0.025*sin(\x)});
	\draw[domain=0:360] plot ({0.25+0.025*cos(\x)}, {-1+0.875+0.025*sin(\x)});
	\draw[domain=0:360] plot ({0.5+0.025*cos(\x)}, {-1+0.875+0.025*sin(\x)});
	\draw[domain=0:360] plot ({0.75+0.025*cos(\x)}, {-1+0.875+0.025*sin(\x)});
	
	\draw[thick, domain=0:360] plot ({1.5+0.125*cos(\x)}, {-1+0.875+0.125*sin(\x)});
	\draw[domain=0:360] plot ({1.5+0.025*cos(\x)}, {-1+0.875+0.025*sin(\x)});
	\draw[thick, domain=0:360] plot ({2.5+0.125*cos(\x)}, {-1+0.875+0.125*sin(\x)});
	\draw[domain=0:360] plot ({2.25+0.025*cos(\x)}, {-1+0.875+0.025*sin(\x)});
	\draw[domain=0:360] plot ({1.75+0.025*cos(\x)}, {-1+0.875+0.025*sin(\x)});
	\draw[domain=0:360] plot ({2+0.025*cos(\x)}, {-1+0.875+0.025*sin(\x)});
	\draw[domain=0:360] plot ({2.5+0.025*cos(\x)}, {-1+0.875+0.025*sin(\x)});
	
	\draw[thick, domain=0:360] plot ({3+0.125*cos(\x)}, {-1+0.875+0.125*sin(\x)});
	\draw[domain=0:360] plot ({3+0.025*cos(\x)}, {-1+0.875+0.025*sin(\x)});
	\draw[thick, domain=0:360] plot ({4+0.125*cos(\x)}, {-1+0.875+0.125*sin(\x)});
	\draw[domain=0:360] plot ({3.75+0.025*cos(\x)}, {-1+0.875+0.025*sin(\x)});
	\draw[domain=0:360] plot ({3.25+0.025*cos(\x)}, {-1+0.875+0.025*sin(\x)});
	\draw[domain=0:360] plot ({3.5+0.025*cos(\x)}, {-1+0.875+0.025*sin(\x)});
	\draw[domain=0:360] plot ({4+0.025*cos(\x)}, {-1+0.875+0.025*sin(\x)});
	
	\draw (-0.2,0.1)--(1.2,0.1);
	\draw (-0.2,-0.35)--(1.2,-0.35);
	\draw (-0.2,-0.35)--(-0.2,0.1);
	\draw (1.2,-0.35)--(1.2,0.1);
	
	\draw (1.3,0.1)--(2.7,0.1);
	\draw (1.3,-0.35)--(2.7,-0.35);
	\draw (1.3,-0.35)--(1.3,0.1);
	\draw (2.7,-0.35)--(2.7,0.1);
	
	\draw (-0.3,-0.35)-- (-0.3,1);
	\draw (1.25,-0.35)-- (1.25,1);
	\draw (2.75,-0.35)-- (2.75,1);
	\draw (4.25,-0.35)-- (4.25,1);
	\node at (0.5,1.2) {$\Lambda_1$};
	\draw[->] (0.4,1.15)--(0.2,1);
	\draw[->] (0.6,1.15)--(0.8,1);
	
	\node at (1.25,-0.65) {$\Lambda_2$};
	\draw[->] (1.15,-0.6)--(0.8,-0.4);
	\draw[->] (1.35,-0.6)--(1.7,-0.4);
	
	\node at (-0.5,0.85) {$K_1$};
	\node at (-0.5,-0.15) {$K_2$};
	
	\draw[decorate,decoration={brace,amplitude=10pt},xshift=-4pt,yshift=0pt](-0.5,-1.25) -- (-0.5,1.1); 
	\node at (-1,-0.1) {\large $K$};
	\node[circle,fill, inner sep=1] at (0.5,-0.7){};
	\node[circle,fill, inner sep=1] at (0.5,-0.8){};
	\node[circle,fill, inner sep=1] at (0.5,-0.9){};
	
	\node[circle,fill, inner sep=1] at (2,-0.7){};
	\node[circle,fill, inner sep=1] at (2,-0.8){};
	\node[circle,fill, inner sep=1] at (2,-0.9){};
	
	\node[circle,fill, inner sep=1] at (3.5,-0.7){};
	\node[circle,fill, inner sep=1] at (3.5,-0.8){};
	\node[circle,fill, inner sep=1] at (3.5,-0.9){};
	
	\draw[dashed] (0.1,0.75)-- (0.9,0);
	\draw[dashed] (1.2,0.75)-- (2.4,0);
	\draw[dashed] (1.3,0.85)-- (3.9,0);
	\end{tikzpicture}
	
	\caption{The idea of the construction of sets $\Lambda_n$ in Lemma~\ref{lem:setK}.}
	\label{fig:Sketch}
\end{figure}

	Then, for sufficiently fast increasing sequence $k_n$ we obtain that
	$$
	\mu(K)\geq 1 -\sum_{n\geq 1} \frac{
    {\black8}(n+1)2^{k_{n-1}}}{2^{k_n}}>0.
	$$
\end{proof}

Let us now fix, once and for all, the sequences $\{k_n\}_{n\geq 0}$, $\{s_n\}_{n\geq0}$, Cantor set $K$, and $p$ the minimum of $K$ with respect to the lexicographic order provided by Lemma~\ref{lem:choiceofKinA} for the sequence $\{2^n\}_{n\geq0}$. Now we proceed to the part where we  blow up the orbit of $K$.

By the definition we have that $\pi_1(K)=\{p_1\}$ is a singleton. Fix a Cantor set $C\subset \mathcal{P}_1$ {\black such that $p_1\in C$}. 
Note that $\z_\infty^{-n}(C)={\black \bigcup^{2^n}_{i=1}}C^{i,n} \subset \mathcal{P}_n$, such that $\z_\infty^n(C^{i,n})=C$ and $\z_\infty^n|_{C^{i,n}}$ is a homeomorphism for each $i$. Let $C^{n}_K:=\bigcup_{i=1}^{2^n}\{C^{i,n}: C^{i,n}\cap \pi_n(K)\neq\emptyset\}$. 
Therefore the set 
${\black \hat K=}\bigcap_{n\in\mathbb{N}} \pi_n^{-1}(C^n_K)$  
is a Cantor set
in $\spaceX$. 
{\black Indeed, to see that, first fix any point $(x,y)\in K\times C$. Note that there is a one-to-one correspondence between $x\in \Lambda$
and a sequence $i_n$ such that $x\in \bigcap_{n\in\mathbb{N}} \pi_n^{-1}(C^{i_n,n})$ and for each $m$ if points $x,p\in \Lambda$ are sufficiently close then they define the same indices $i_1,\ldots, i_m$. Note also that each set $\bigcap_{n\in\mathbb{N}} \pi_n^{-1}(C^{i_n,n})$ is homeomorphic to $C$,
with homeomorphism obtained as a projection onto the first coordinate.
Therefore, for each $x$ in $\Lambda$ we have a well defined map $\eta_x: C\to \bigcap_{n\in\mathbb{N}} \pi_n^{-1}(C^{i_n,n})\subset \mathcal{P}_\infty$ which is a homeomorphism onto its image. By the above discussion, for every $\eps>0$
there is $\delta>0$ such that if $d(x,p)<\delta$ then $d(\eta_x(y),\eta_p(q))<\eps$ for any $y,q\in C$. Namely, for small $\delta$ we have $\pi_m(\eta_x(y),\eta_p(q))\in C^{i_m,m}$ for some $i_m$. This shows that $\hat{K}\subset \mathcal{P}_\infty$ is homeomorphic to $K\times C$ by the map $\eta:K\times C$ defined by $(x,y)\mapsto \eta_x(y)\in \hat K$.}
If we intersect $K\times C$ with $\pi_n^{-1}(C^{i,n})$, then {\black the} obtained set can be identified
with $(\pi_n^{-1}(\{\hat{R}^i_n(p_n)\})\cap K)\times C^{i,n}
{\black=\eta((\pi_n^{-1}(\{\hat{R}^i_n(p_n)\})\cap K)\times C)}$. In other words, we have natural representation of splitting of $K\times C$ over cylinder sets in $K$.
Note that we can pull back $C$ to $\mathcal{P}_\infty$ by defining
$\hat{C}=\bigcap_{n\in\mathbb{N}} \pi_n^{-1}(C^{2^n,n})$. 
This way we have a natural 1-1 correspondence between $C$ and $\hat{C}$, and by the definition also $p\in \hat{C}$.
If we fix any pseudo-arc
$P\subset \mathcal{P}_\infty\setminus\{0_{\infty}\}$ {\black such that $p\in P$,}
then its projection $\pi_1(P)$ is nondegenerate, and therefore we may also require that $C\subset \pi_1(P)$ which gives $\hat{C}\subset P$. Using identification of $C$ with $\hat{C}$, we can view $C$ as a subset of $\mathcal{P}_\infty$.
The pseudo-arc $P$ will be specified later, before Lemma~\ref{lem:main}.

Here and later we denote by $K_X:=K\cap X$ for some $X\in \E_{(n)}^0$. Proof of the following fact is the same as Lemma 2.5 in \cite{Cro}.
\begin{lem}
Assume that $\mathbf{A_1}$ holds and let $X\to \ldots \to X'=\mainmap^q(X)$ for some $q\geq 0$ be a path in the graph
$\mathcal G(\E_{(n)}^n)$ with $X,X'\in \E_{(n)}^0$. Then $\mainmap^q(K_X)=K_{X'}$.
\end{lem}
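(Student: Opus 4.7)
The plan is to establish the equality by two inclusions, exploiting the structural compatibility between the Cantor set $K$ and the family $\E_{(n)}^0$ enforced by hypothesis $\mathbf{A_1}$ together with the construction of $K$ from Lemma~\ref{lem:choiceofKinA}. I would first record the basic orbit observation: since $X=\pi_{k_n}^{-1}(D)$ for a cube $D$ tamely embedded in $S_{k_n}\setminus \mathbb{F}_{k_n}$, the map $\mainmap$ acts on $X$ coordinate-wise by rotations, so $\mainmap^q(X)=X'$ entails $\mainmap^q(x)\in X'$ for every $x\in X$. Thus the only real content is the statement that $K$ is preserved along these orbit segments, i.e., $\mainmap^q(K\cap X)=K\cap X'$.

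For the forward inclusion $\mainmap^q(K_X)\subset K_{X'}$, I would fix $x\in K_X$ and use the fact that the path $X\to \mainmap(X)\to \ldots\to \mainmap^q(X)=X'$ lies in the graph $\mathcal G(\E_{(n)}^n)$, so $q\le 2n$ and, by $(n+1)$-iterability of $\E_{(n)}^0$ together with the no-cycle condition from $\mathbf{A_1}$(a), the intermediate vertices $\mainmap^i(X)$ are pairwise disjoint elements of $\E_{(n)}^n$. This means the orbit segment $x,\mainmap(x),\ldots,\mainmap^q(x)$ stays within the orbit of $\rs(\E^0_{(n)})$, which by the design of $\E_{(n)}^0$ is precisely a Kakutani–Rokhlin-type neighbourhood of the orbit arc $\bigcup_{i=0}^{s_n}\mainmap^i(W_0)$ in Lemma~\ref{lem:choiceofKinA}(\ref{lem:choiceofKinA:2}). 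Because $q+s_n+2n+2<2^{k_n}$ by (\ref{lem:choiceofKinA:3}), the orbit of any $x\in K$ of length at most $q$ does not wrap around the periodic decomposition of the odometer, hence $\mainmap^q(x)$ still lies in the $K$-part of the tower and not in any removed layer $\Lambda_m$. Therefore $\mainmap^q(x)\in K$, and combining with $\mainmap^q(x)\in X'$ yields $\mainmap^q(x)\in K_{X'}$.

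The reverse inclusion is entirely symmetric: for $y\in K_{X'}$, set $x=\mainmap^{-q}(y)$; $(n+1)$-iterability applied to the backward path $X'\to\mainmap^{-1}(X')\to\ldots\to \mainmap^{-q}(X')=X$ (which is again a no-cycle path in $\mathcal G(\E_{(n)}^n)$) together with the same orbit-length bound from Lemma~\ref{lem:choiceofKinA}(\ref{lem:choiceofKinA:3}) forces $x\in K\cap X=K_X$, so $y=\mainmap^q(x)\in \mainmap^q(K_X)$.

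The only delicate point, and the one I would want to verify against \cite{Cro} line by line, is the tacit claim that the family $\E_{(n)}^0$ and the Cantor set $K$ are coherent in the strong sense that $K\cap \rs(\E_{(n)}^0)$ is invariant under the partial action of $\mainmap$ defined by paths in $\mathcal G(\E_{(n)}^n)$. This coherence is not part of $\mathbf{A_1}$ per se but will be guaranteed by the inductive construction of $\E_{(n)}^0$ in Section~\ref{sec:mainproof}, which is set up so that each $X\in\E_{(n)}^0$ corresponds to a cylinder-like piece of $K$ inside a Kakutani–Rokhlin tower whose height is controlled by the bound $k_{n+1}>2^{k_n}+3n$ from Lemma~\ref{lem:choiceofKinA}(\ref{lem:choiceofKinA:3}). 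Once that coherence is granted, the lemma reduces to bookkeeping as above, exactly mirroring the argument of \cite[Lemma 2.5]{Cro}.
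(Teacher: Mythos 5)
Your proof attempts a direct argument on the odometer, whereas the paper simply defers to \cite[Lemma 2.5]{Cro}, whose argument is a structural induction exploiting the compatibility hypotheses in $\mathbf{A_1}$ (in particular parts (b) and (c), which let one push a path in $\mathcal G(\E_{(n)}^n)$ down to a corresponding path in $\mathcal G(\E_{(n-1)}^{n-1})$). Pursuing a direct hands-on argument is a legitimate alternative route in principle, but as written the central step is not justified.

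The gap is the claim that $\mainmap^q(x)\in K$ for $x\in K_X$. You derive it from the bound $q+s_n+2n+2<2^{k_n}$, reading it as ``the orbit does not wrap around the $2^{k_n}$-tower, hence $\mainmap^q(x)$ stays in the $K$-part.'' This does not hold: $K=\Lambda\setminus\bigcup_{m\geq 0}\Lambda_m$ is a proper Cantor subset of the tower $\bigcup_{i=0}^{s_n}\mainmap^i(W_0)$, and a short orbit segment of a point of $K$ can perfectly well enter one of the deleted layers $\Lambda_m$ for $m<n$ without any wrap-around at level $k_n$. Already at level $k_0$ one has $K\subset\{x:x_{k_0}=0\}=W_0^{(k_0)}$, so for any $q\not\equiv0\pmod{2^{k_0}}$ the point $\mainmap^q(x)$ lands in $\Lambda_0$ and hence outside $K$; nothing in your argument constrains $q$ to be such a multiple. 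What actually rules this out is the combinatorics of the graph: $X,X'\in\E_{(n)}^0$ and the edge structure force the shift $q$ to be precisely a return time of $K$ at the relevant scale, and that information comes from $\mathbf{A_1}$ (through the compatibility between $\E_{(n)}^0$ and $\E_{(n-1)}^0$), not from the crude ``no wrap'' estimate at level $k_n$. You flag this yourself in your final paragraph, and correctly point to the coherence between $\E_{(n)}^0$ and $K$ as the missing ingredient, but the argument you sketch does not close the gap --- it in effect reasserts the lemma. A secondary point: the bound $q\leq 2n$ only applies once you reduce to paths whose interior vertices are not in $\E_{(n)}^0$ (and even then the sharp bound is $q\leq 2n+1$); the general case should first be split at such interior vertices and handled piecewise, a step you should state explicitly.

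To be aligned with the paper you would want to prove the lemma by induction on $n$ (base case $n=0$, where $\mathcal G(\E_{(0)}^0)$ has no edges), using $\mathbf{A_1}$(b)(c) to project a path $X\to\cdots\to X'$ in $\mathcal G(\E_{(n)}^n)$ to a path $Y\to\cdots\to Y'$ in $\mathcal G(\E_{(n-1)}^{n-1})$ between the unique $\E_{(n-1)}^0$-elements containing $X,X'$, and then using the refinement relation $\rs(\E_{(n)}^{n+1})\cap\mainmap^k(\rs(\E_{(n-1)}^0))=\mainmap^k(\rs(\E_{(n)}^0))$ to transfer $\mainmap^q(K_Y)=K_{Y'}$ down to the $\E_{(n)}^0$-level.
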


Now we will introduce basic ingredients of the construction, which are inspired by \cite{Cro}. Instead of working on manifolds as in \cite{Cro} we will work on inverse limits of cubes. The main difference in the approach is depicted in Figure~\ref{fig:1}.

First we set 
$\psi_0=\id$ and $G_0=\mainmap$. 
Next, we want to choose a sequences of 
homeomorphisms $\{H_n:\mathbb{D}_{\infty}\to \mathbb{D}_{\infty}\}_{n\geq 1}$, $\{h_n:\mathbb{D}_{\infty}\to \mathbb{D}_{\infty}\}_{n\geq 1}$, 
such that the following axioms $\mathbf{B_{1,2,3,5,6,7}}, \mathbf{C_{1,2,5,6,7,8}}$ hold for every $n\geq 1$
and furthermore:
\begin{enumerate}
    \item each $h_n$ (resp. $H_n$) is an extension  of a homeomorphism $\hat{h}_n
    \colon S_{k_n}\to S_{k_n}$ (resp. $\hat{H}_n$) through branched covers $\varphi_\infty$ to the whole inverse limit $\spaceX$.
\end{enumerate}
Whenever we have such sequences of homeomorphisms $h_n, H_n$, for any $n\geq 1$,
we define the homeomorphisms 
$\psi_n, g_n, G_n: \mathbb{D}_{\infty}\to \mathbb{D}_{\infty}$ as follows. 
\begin{align}
\psi_n &=H_n \circ h_n \circ \ldots \circ H_1\circ h_1.\label{eq:9}\\
g_n &=(h_n\circ\psi_{n-1})^{-1}\circ \mainmap \circ h_n\circ\psi_{n-1}.\label{eq:10}\\
G_n &=\psi_n^{-1}\circ \mainmap \circ \psi_n.\label{eq:11}
\end{align}

The following conditions are based on those given in \cite[pp.270, 275]{Cro}, with the exception that we skip condition $\mathbf{B_{4}}$, but add condition $\mathbf{B_{7_{a,b,c}}}$ instead. Namely, we do not need that fibers are nowhere dense in $\spaceX$, but rather we need to control carefully the
structure of $\psi^{-1}(P)$ (which will be nowhere dense in $\spaceX$ by the definition). Note also that our condition $\mathbf{B_{3}}$ is a combination of conditions $\mathbf{B_{3}}$ and $\mathbf{C_{3}}$ from \cite[pp.270, 283]{Cro}.
\begin{figure}[!ht]
		\includegraphics[width=12cm]{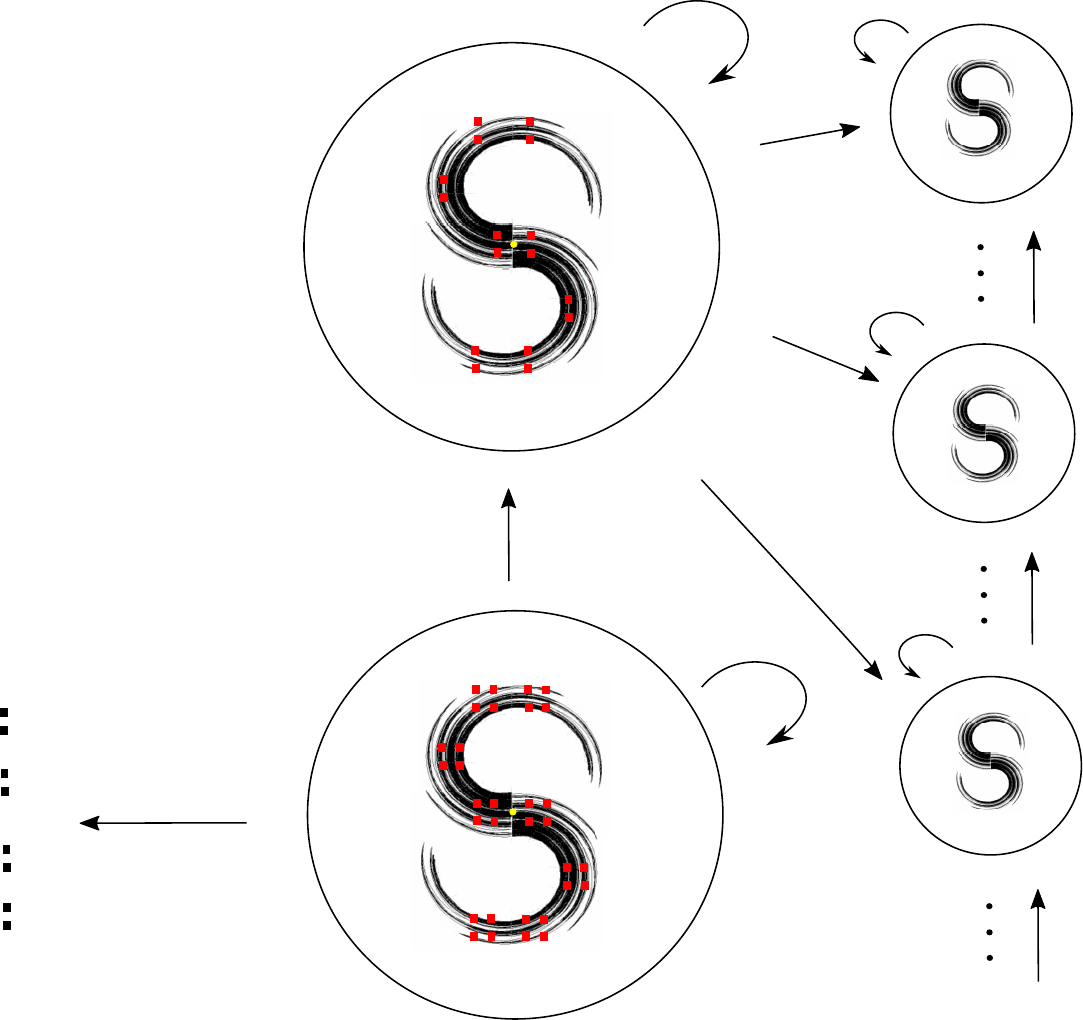}
		\put(-345,110){{$C$}}
		\put(-315,70){{\small{$C\leftarrow K\times C$}}}
		\put(-355,70){$\sigma$}
		\put(-200,150){{$\Psi$}}
		\put(-100,310){{$\hat{R}_{\infty}$}}
		\put(-240,305){{$\mathbb{D}_{\infty}$}}
		\put(-240,120){{$\mathbb{D}_{\infty}$}}
		\put(-220,80){{$\mathbb{P}$}}
		\put(-227,260){{$\mathcal{P}_{\infty}$}}
		\put(-192,80){{$K\times C$}}
		\put(-182,260){{$K$}}
		\put(-90,265){{$\pi_{k_0}$}}
		\put(-90,195){{$\pi_{k_1}$}}
		\put(-90,150){{$\pi_{k_2}$}}
		\put(-82,95){{$G$}}
		\put(-10,230){{$\varphi_{\infty}^{k_1-k_0}$}}
		\put(-10,130){{$\varphi_{\infty}^{k_2-k_1}$}}
		\put(-8,20){{$\varphi_{\infty}^{k_3-k_2}$}}
		\put(-2,305){{$S_{k_0}$}}
		\put(0,200){{$S_{k_1}$}}
		\put(0,95){{$S_{k_2}$}}
		\put(-20,285){\tiny{$\mathcal{P}_{k_0}$}}
		\put(-18,185){\tiny{$\mathcal{P}_{k_1}$}}
		\put(-18,80){\tiny{$\mathcal{P}_{k_2}$}}
		\put(-75,323){\tiny{$\hat R_{k_0}$}}
		\put(-70,230){\tiny{$\hat R_{k_1}$}}
		\put(-65,128){\tiny{$\hat R_{k_2}$}}
		\vspace{0.5cm}
		\caption{The Denjoy-Rees-like enrichment on the pseudo-arc.}\label{fig:1}
	\end{figure}
We also assume that we are given a minimal aperiodic dynamical system
$(C,\sigma)$. In practice, we will restrict our attention only to minimal subshifts with positive entropy, so it is natural to write $\sigma$ when denoting acting homeomorphism.

\begin{itemize}
    \item[$\mathbf{B_{1}}$.] 
For $n>0$ the closure
$\overline{\{x : h_n(x)\neq x\}}$ is contained
in the set $\rs (\E_{(n-1)}^{n-1})$.
    \item[$\mathbf{B_{2}}$.] 
For $n>0$ maps $h_n$ and $\mainmap$ commute 
along edges of the graph $\mathcal G(\E_{(n-1)}^{n-1})$.
    \item[$\mathbf{B_3}$.] The following holds:

\begin{equation}
\lim_{n\to \infty}
 \mesh \big(\psi_{n-1}^{-1}
 ( \E_{(n)}^{n+1} \backslash \E_{(n)}^{n-1}) \big)=0.
\end{equation}
\item[$\mathbf{B_{5}}$.] For $n\geq 0$ and every $X\in \E_{(n)}^0$ we have $K_X\times C\subset \psi_{n}^{-1}(\Int X)$.
\item[$\mathbf{B_{6}}$.] For $n>0$ let $\gamma: X\to \ldots \to X'=\mainmap^q(X)$ be a path in the graph
$\mathcal G(\E_{(n)}^n)$, such that $X,X'\in \E_{(n)}^0$ and intermediate vertices of $\gamma$ are not in  $\E_{(n)}^0$. Then
\begin{enumerate}[(a)]
\item if the path is fully contained in $\E_{(n)}^{n-1}$ then
$g_n^q(x,c)=G_{n-1}^q(x,c)=(\mainmap^q(x),\sigma^q(c))$  for every  $(x,c)\in K_X\times C$.
\item if the path contains an element from $\E_{(n)}^{n}\setminus \E_{(n)}^{n-1}$ then
$g_n^q(x,c)=(\mainmap^q(x),\sigma^{q}(c))$  for every  $(x,c)\in K_X\times C$.
\end{enumerate}
\item[$\mathbf{B_{7}}$.] For $n\geq 0$ there is a sequence of chains $\Mm_n\subset \mathbb{D}_{\infty}$ such that $\rs(\Mm_{n+1})\subset \rs(\Mm_n)$ and  $\bigcap_{n=0}^\infty \rs(\Mm_n)=\Ps_\infty$, and  
\begin{enumerate}[(a)]
\item 
$\mesh \big( \psi_{n}^{-1} (\Mm_{n}) \big)  < 2^{-n}.\label{mesh_is_small}$

\item If $n>0$ then for every $k<n$ we have
$
\psi_n^{-1}( \overline { \rs ( \Mm_{n}) })
\subset  \psi_k^{-1}( \rs ( \Mm_{k})) $
and
$\psi_{n}^{-1} (\Mm_{n})$ is crooked inside $\psi_{n-1}^{-1} (\Mm_{n-1})$.
\item $\rs (\E^{n}_{(n)})\subset \rs(\mathcal{C}_{n})$.
\end{enumerate}
\end{itemize}

Under the stated conditions we have the following two lemmas.

\begin{lem}\label{lem:hnconverge}
Assume that hypotheses $\mathbf{A_{1,2,3}}$ and $\mathbf{B_{1,2,3}}$ hold. Then:
\begin{enumerate}
    \item Sequence of homeomorphisms $\psi_n$ converges uniformly to a continuous surjection $\psi\colon \spaceX\to \spaceX$.
    \item Sequences of homeomorphisms $g_n$ and $g_n^{-1}$ (resp. $G_n$ and $G_n^{-1}$) converge uniformly to homeomorphism $g$ and $g^{-1}$, respectively (resp. $G$ and $G^{-1}$).
    \item The homeomorphism $G$ is a topological extension of $\mainmap$; i.e. $\mainmap\circ \psi=\psi\circ G$.
\end{enumerate}
\end{lem}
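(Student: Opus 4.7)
Plan: The argument proceeds in three stages: (i) show $\psi_n\to\psi$ uniformly and that $\psi$ is surjective, using the nested-support structure of the families $\E^n_{(n)}$ together with $\mathbf{A_3}$; (ii) use the commutation structure from $\mathbf{B_1}$, $\mathbf{B_2}$ together with the quantitative estimate $\mathbf{B_3}$ to show $g_n$, $G_n$ (and their inverses) converge uniformly to homeomorphisms; and (iii) pass to the limit in the identity $\mainmap\circ\psi_n=\psi_n\circ G_n$.

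\textbf{Step 1.} The iteration $\psi_n=H_n\circ h_n\circ\psi_{n-1}$ together with $\mathbf{B_1}$ (and the analogous support condition for $H_n$) shows that $H_n\circ h_n$ equals the identity outside $\rs(\E^{n-1}_{(n-1)})$ and permutes the elements of this family. Hence, for every $x\in\spaceX$, the points $\psi_n(x)$ and $\psi_{n-1}(x)$ either coincide or lie in a common element of $\E^{n-1}_{(n-1)}$. Applying the refinement condition $\mathbf{A_1}(b)$ inductively, the whole tail $\psi_n(x),\psi_{n+1}(x),\ldots$ stays inside a single element of $\E^{n-1}_{(n-1)}$, yielding
\[
\sup_{m\ge n}d_\infty(\psi_n,\psi_m)\le\mesh\bigl(\E^{n-1}_{(n-1)}\bigr),
\]
which tends to $0$ by $\mathbf{A_3}$. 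Thus $\{\psi_n\}$ is uniformly Cauchy and converges uniformly to a continuous $\psi\colon\spaceX\to\spaceX$. Surjectivity is by compactness: for $y\in\spaceX$, pick $x_n$ with $\psi_n(x_n)=y$, extract a convergent subsequence $x_{n_k}\to x$, and use uniform convergence to get $\psi(x)=y$.

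\textbf{Step 2.} From $\psi_n\circ G_n=\mainmap\circ\psi_n=\mainmap\circ H_n\circ h_n\circ\psi_{n-1}$ and $\psi_n\circ G_{n-1}=H_n\circ h_n\circ\psi_{n-1}\circ G_{n-1}=H_n\circ h_n\circ\mainmap\circ\psi_{n-1}$, we see that $\psi_n(G_n(x))$ and $\psi_n(G_{n-1}(x))$ differ only by the commutator of $H_n\circ h_n$ with $\mainmap$ evaluated at $\psi_{n-1}(x)$. By $\mathbf{B_2}$ (and the analogous commutation for $H_n$), this commutator vanishes along every edge present in $\mathcal G(\E^{n-1}_{(n-1)})$; any discrepancy is therefore confined to points whose orbits pass through $\rs(\E^{n+1}_{(n)}\setminus\E^{n-1}_{(n)})$. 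Pulling back through $\psi_{n-1}$ (which is uniformly close to $\psi_n$ by Step~1) and invoking $\mathbf{B_3}$,
\[
d_\infty(G_n,G_{n-1})\le\mesh\bigl(\psi_{n-1}^{-1}(\E^{n+1}_{(n)}\setminus\E^{n-1}_{(n)})\bigr)+o(1)\longrightarrow 0,
\]
and a nested-refinement argument identical to Step~1 upgrades this to a uniformly Cauchy estimate. Hence $G_n\to G$ uniformly. Running the same argument with $\mainmap^{-1}$ in place of $\mainmap$ shows $G_n^{-1}\to G^{-1}$ uniformly, so $G$ is a homeomorphism. The identical reasoning, with $H_n$ removed from the conjugating map, establishes the corresponding convergences for $g_n$ and $g_n^{-1}$.

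\textbf{Step 3.} Passing to the uniform limit in $\mainmap\circ\psi_n=\psi_n\circ G_n$, using continuity of $\mainmap$ together with the convergences $\psi_n\to\psi$ and $G_n\to G$, yields $\mainmap\circ\psi=\psi\circ G$, so $G$ is a topological extension of $\mainmap$.

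\textbf{Main obstacle.} The subtle point is Step~2. The map $\psi$ is only a continuous surjection, and its fibres accommodate the Cantor factors $K\times C$, so $\psi^{-1}$ is genuinely multi-valued and one cannot pass to the limit inside the formula $G_n=\psi_n^{-1}\circ\mainmap\circ\psi_n$. What saves the argument, and forces $G_n$ to converge to a bona fide homeomorphism rather than to a merely continuous map, is the quantitative mesh bound $\mathbf{B_3}$ applied on the source side via $\psi_{n-1}^{-1}$: it converts the apparent loss of injectivity of the limit $\psi$ into vanishing corrections at each finite stage, which is exactly why $\mathbf{B_3}$ is formulated in terms of $\psi_{n-1}^{-1}$ rather than in the image.
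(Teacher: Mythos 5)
The paper itself supplies no detailed argument here — the text says only that the proof is ``analogous to the proof of Proposition 3.1 in [BCL]'' — so your proposal is being compared against the expected expansion of that reference rather than against an explicit proof. Your overall strategy is exactly the right one: $\mathbf{B_1}$ localises the support of each corrector, $\mathbf{A_1}$(b)–(c) and $\mathbf{A_3}$ trap the tail $\psi_n(x),\psi_{n+1}(x),\ldots$ inside a shrinking element of the filtration and give a uniform Cauchy estimate, $\mathbf{B_2}$ kills the commutator of the corrector with $\mainmap$ along the interior edges of $\mathcal G(\E^{n-1}_{(n-1)})$, and $\mathbf{B_3}$ takes care of the residual discrepancy at the ends of paths.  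Passing the relation $\mainmap\circ\psi_n=\psi_n\circ G_n$ to the limit is then immediate.  This is the method of [BCL].

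Two places need tightening, though. First, $\psi_n=H_n\circ h_n\circ\cdots\circ H_1\circ h_1$ contains the correctors $H_k$, while $\mathbf{B_{1}}$ and $\mathbf{B_{2}}$ only constrain the $h_k$.  Without the corresponding support and commutation hypotheses on the $H_k$ (these are $\mathbf{C_1}$ and $\mathbf{C_2}$ in the paper's notation, \emph{not} among the hypotheses of the lemma as stated), the sequence $(\psi_n)$ is genuinely uncontrolled and the conclusion cannot hold for arbitrary $H_n$.  You slip this in parenthetically (``the analogous support condition for $H_n$''), which is the right instinct, but it should be stated as an explicit additional input and traced to $\mathbf{C_1}$, $\mathbf{C_2}$; note that in [BCL] the analogue of this proposition is formulated \emph{before} any $H_n$ is introduced, which is why the transplanted hypotheses look too weak. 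Second, the Cauchy estimate in Step~2 is not actually derived.  The discrepancy set you identify is governed in the first instance by the ends of maximal paths in $\mathcal G(\E^{n-1}_{(n-1)})$, i.e.\ by elements of $\E^{n}_{(n-1)}\setminus\E^{n-1}_{(n-1)}$ pulled back by $\psi_n^{-1}$; relating this to the quantity actually controlled by $\mathbf{B_3}$, namely $\psi_{n-1}^{-1}(\E^{n+1}_{(n)}\setminus\E^{n-1}_{(n)})$, requires a chase through the compatibility and refinement axioms that you do not carry out, and the unexplained ``$o(1)$'' term is where that gap is hiding.  As written, Step~2 does not yet produce a uniformly Cauchy bound for $(G_n)$.
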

\begin{proof}
Proof is analogous to the proof of Proposition 3.1 in \cite{Cro} so we do not repeat it here.
\end{proof}

\begin{lem}\label{lem:fibres}
Assume that hypotheses $\mathbf{A_{1,2,3}}$ and $\mathbf{B_{1,2,3,7}}$ hold. Then:
\begin{enumerate}
\item Let $x\in \mainmap^k(K)$ for some integer $k\in \Z$ and let $X_n\in \E_{(n)}^k$, $n\geq |k|$
be a nested sequence of sets such that $x\in X_n$ for each $n$. Then
$$
\psi^{-1}(x)=\bigcap_{n\geq |k|}\psi^{-1}_n(X_n)
$$
and $\psi^{-1}(x)$ is a connected set.
\item For every $y\in \spaceX\setminus \bigcup_{i\in \Z} \mainmap^i(K)$ the set $\psi^{-1}(y)$ is a singleton.
\end{enumerate}
In particular, $\psi$ is a monotone map.
\end{lem}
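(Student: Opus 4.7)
The plan is to follow the template of Proposition~3.2 of \cite{Cro}, adapted to our inverse-limit setting in which the rectangles $X\in\E_{(n)}^0$ have the form $\pi_{k_n}^{-1}(D)$, i.e.\ topologically a cube times a Cantor set rather than a closed ball in a manifold. The main tools will be the uniform convergence $\psi_n\to\psi$ from Lemma~\ref{lem:hnconverge}, the mesh-decay $\mathbf{A_3}$, the refinement and iterability structure $\mathbf{A_1}$, and the support, commutation and chain conditions $\mathbf{B_{1,2,3,7}}$. I would prove (2), then the set equality in (1), then connectedness; the monotonicity of $\psi$ follows immediately.

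I would start with (2). Assume $y\notin\bigcup_{i\in\Z}\mainmap^i(K)$. Since $\mesh(\E_{(n)}^n)\to 0$ by $\mathbf{A_3}$ and the orbit of $K$ is contained in every $\rs(\E_{(n)}^n)$ (by construction together with $\mathbf{B_5}$), there exists $N$ such that $y\notin\rs(\E_{(n)}^n)$ for all $n\geq N$. Hypothesis $\mathbf{B_1}$, together with the analogous support property for $H_n$, then forces every $h_m$ and $H_m$ with $m>N$ to act as the identity on a fixed neighbourhood $U$ of $\psi_N^{-1}(y)$. Consequently $\psi_m|_U=\psi_N|_U$ for all $m\geq N$; passing to the uniform limit gives $\psi|_U=\psi_N|_U$, so $\psi^{-1}(y)=\{\psi_N^{-1}(y)\}$ since $\psi_N$ is a homeomorphism.

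For the set equality in (1), the inclusion $\bigcap_n\psi_n^{-1}(X_n)\subset\psi^{-1}(x)$ is straightforward: if $\psi_n(z)\in X_n$ for all $n$, then $d(\psi_n(z),x)\leq\diam(X_n)\to 0$ by $\mathbf{A_3}$, and uniform convergence $\psi_n\to\psi$ yields $\psi(z)=x$. For the reverse inclusion I would decompose $\psi=\psi^{(n)}\circ\psi_n$, where $\psi^{(n)}$ is the uniform limit of the tail $\cdots\circ H_{n+2}\circ h_{n+2}\circ H_{n+1}\circ h_{n+1}$. By $\mathbf{B_1}$ each factor in this tail is supported in some $\rs(\E_{(m-1)}^{m-1})$, which by $\mathbf{A_1}$(b) refines the element $X_n\in\E_{(n)}^k$; combined with the commutation $\mathbf{B_2}$ along the graph edges, this yields $\psi^{(n)}(X_n)\subset X_n$, so that $\psi(z)=x\in X_n$ forces $\psi_n(z)\in X_n$.

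For the connectedness of $\psi^{-1}(x)$ I would invoke $\mathbf{B_7}$. By $\mathbf{B_7}(c)$ every $X_n$ lies inside $\rs(\Mm_n)$, and by $\mathbf{B_7}(a,b)$ the pulled-back chain $\psi_n^{-1}(\Mm_n)$ has mesh tending to zero and is crooked inside $\psi_{n-1}^{-1}(\Mm_{n-1})$. Let $Y_n$ denote the union of those links of $\psi_n^{-1}(\Mm_n)$ that meet $\psi_n^{-1}(x)$ and are contained in $\psi_n^{-1}(X_n)$: as a connected sub-chain, $Y_n$ is a compact connected set, and nesting of chains gives $Y_{n+1}\subset Y_n$. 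Combining with the set equality above gives $\psi^{-1}(x)=\bigcap_n Y_n$, a nested intersection of continua and hence itself a continuum; together with (2) this shows every fibre of $\psi$ is connected, i.e.\ $\psi$ is monotone. The main obstacle I anticipate is the verification that $\psi^{(n)}(X_n)\subset X_n$, which requires careful bookkeeping of supports of $h_m$ and $H_m$ for $m>n$ and their interaction with the refinement hierarchy; a second subtlety is the extraction of the connected sub-chain $Y_n$, since $X_n\cong D\times C$ is itself disconnected and one must use $\mathbf{B_7}$ to single out the connected piece containing the relevant fibre.
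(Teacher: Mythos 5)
Your handling of part (2) and the set-identity in part (1) broadly follows the template of Proposition 3.4 in B\'eguin--Crovisier--Le Roux, which is also what the paper does; the one place I would tighten is the step ``$\psi^{(n)}(X_n)\subset X_n$ forces $\psi_n(z)\in X_n$'' -- forward invariance alone does not give this implication, and you really need that each $h_m,H_m$ for $m>n$ is a homeomorphism supported in sets that refine $X_n$, hence preserves both $X_n$ and its complement; passing to the limit gives $\psi^{(n)}(\spaceX\setminus X_n)\subset\spaceX\setminus\Int X_n$, and then one uses $x\in\Int X_n$ (guaranteed by the compatibility/refinement hypotheses) to close the gap.

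The connectedness argument, however, is genuinely wrong and this is the one place where the paper departs from the manifold template. The fibre $\psi^{-1}(x)$ for $x\in\mainmap^k(K)$ contains a whole Cantor set of fixed positive diameter: by $\mathbf{B_5}$ one has $K_{X_n}\times C\subset\psi_n^{-1}(\Int X_n)$ for every $n$, so after intersecting over $n$ the fibre contains a copy of $\{x\}\times C$. On the other hand $\mathbf{B_7}(a)$ says $\mesh(\psi_n^{-1}(\Mm_n))<2^{-n}$, so the union $Y_n$ of those links of $\psi_n^{-1}(\Mm_n)$ that meet the single point $\psi_n^{-1}(x)$ has diameter less than roughly $2^{-n+1}$. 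A nested sequence of such $Y_n$ has intersection of diameter zero, i.e.\ at most a point, which cannot equal the positive-diameter fibre $\psi^{-1}(x)$. So the decomposition ``$\psi^{-1}(x)=\bigcap_n Y_n$'' is false; the chains are far too fine to carry the fibres.

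The paper instead exploits that each $h_n$ and $H_n$ is, by construction, a coordinate-wise lift of a homeomorphism $\hat h_n,\hat H_n$ of $S_{k_n}$ through the branched covers $\varphi_\infty$. This forces the sets $\psi_n^{-1}(X_n)$ to retain the matchbox structure $\pi_{k_n}^{-1}\big(\hat h_n^{-1}\circ\hat H_n^{-1}(D_n)\big)$, so that for each fixed coordinate $s$ and each $n>s$ the projection $\pi_s(\psi_n^{-1}(X_n))=\varphi_\infty^{k_n-s}\big(\hat h_n^{-1}\circ\hat H_n^{-1}(D_n)\big)$ is a \emph{connected} compact set, and these are nested in $n$. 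Hence each $\pi_s(\psi^{-1}(x))$ is a nested intersection of continua, so connected, and since $\psi^{-1}(x)=\bigcap_n\pi_{k_n}^{-1}(\pi_{k_n}(\psi_n^{-1}(X_n)))$ is the inverse limit of its connected projections, it is itself connected. Chains and $\mathbf{B_7}$ play no role here; the key structural fact is the lift property of the perturbing homeomorphisms, which your argument does not use.
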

\begin{proof}
Proof follows the same lines as the proof of Proposition~3.4 in \cite{Cro}.
Only the fact that $\psi^{-1}(x)$ is connected needs an additional argument, since sets $X_n$ are not discs but matchboxes $X_n=\pi_{k_n}^{-1}(D_n)\approx D_n\times C_n$ (where $\approx$ denotes that the spaces are homeomorphic) for some
cube $D_n\subset \mathbb{D}_4$ and Cantor set $C_n$. However by definition $h_n$ and $H_n$ (therefore also $\psi_n$) are lifts of homeomorphisms $\hat{h}_n,\hat{H}_n$ acting  on $S_{k_n}$, so 
$$
\psi_n^{-1}(X_n)\approx \hat{h}_n^{-1}\circ \hat{H}_n^{-1}(D_n)\times C_n.
$$
But then for each $s$ and $n>s$ we have that 
$$
\pi_s(\psi_n^{-1}(X_n))=\varphi_\infty^{k_n-s}(\hat h_n^{-1}\circ \hat H_n^{-1}(D_n))
$$
is a connected set, and the sequence $\pi_s(\hat h_n^{-1}\circ \hat H_n^{-1}(D_n)\times C_n)$ is nested, since sequence $\psi_n^{-1}(X_n)$ is nested. But then 
$\pi_s(\psi^{-1}(x))$ is connected for every $s$, thus $\psi^{-1}(x)$ is connected.
\end{proof}

The following conditions are also based on those given in \cite[pp.283-285]{Cro}, with the exception that we skip conditions $\mathbf{C_{3}}$ and $\mathbf{C_{4}}$. Similarly as above we do not need that fibers are nowhere dense in $\spaceX$.

\begin{itemize}
    \item[$\mathbf{C_{1}}$.] 
For $n>0$ the closure
$\overline{\{x : H_n(x)\neq x\}}$ is contained
in the set $\rs (\E_{(n)}^{n-1})$.
    \item[$\mathbf{C_{2}}$.] 
For $n>0$ maps $H_n$ and $\mainmap$ commute 
along edges of the graph $\mathcal G(\E_{(n)}^{n-1})$.

\item[$\mathbf{C_{5}}$.] For $n\geq 0$ and every $x\in K$ the map $H_n$ preserves $h_n\circ \psi_n(\{x\}\times C)$.
\item[$\mathbf{C_{6}}$.] For $n\geq 0$ if $X\to \ldots \to X'=\mainmap^q(X)$ is a path in the graph
$\mathcal G(\E_{(n)}^n)$ with $X,X'\in \E_{(n)}^0$ then
$$
G_n^q(x,c)=(\mainmap^q(x),\sigma^q(c)) \quad \text{ for every } (x,c)\in K_X\times C.
$$
\end{itemize}

Before we go deeper in tightening our construction let us briefly describe the main idea of what follows (see also \cite[Section~9]{Cro}). In order to control the dynamics of constructed homeomorphisms we will construct “waste bins” and perform “waste collection”. We construct the “waste bins” in order to control possible invariant measures on the pseudo-arc; without that control we{\black,} up to now{\black,} only have homeomorphisms with rich dynamics but we do not know how rich this dynamics exactly is. For all the dynamics that we do not need we will perform the “waste collection” and “recycle” by sending them back to the “waste bins”. Therefore, the following {\black additional} conditions will be given to ensure that any invariant measure for $\mainmap$ will give measure $0$ to the set $\Phi^{-1}(K)\setminus K\times C$. 
{\black We denote by $\tilde R_{\infty}$ the first return map of $\mainmap$ in $\Phi^{-1}(K)$. We will control, in terms of conditions $\mathbf{C_7}$ and $\mathbf{C_8}$, consecutive returns of points back to $\Phi^{-1}(K)$ which will force that omega limit set under $\tilde R_{\infty}$ for points returning infinitely many times to $\Phi^{-1}(K)$ is included in $K\times C$.}


Next we define sequences of closed cubes  $(P^{(n)}_i )_{i\in\N}\subset \mathbb{D}_{\infty}$ defined by $\pi^{-1}_{k_n}(D)$, where $D\subset S_{k_n}\setminus \mathbb{F}_{k_n}$ that will play the role of “waste bins”. The idea is very similar to the one in \cite[p.290]{Cro}, however, we work on levels $S_{k_n}$. 
First, we require that $(P^{(n)}_i)_{i\in\N}\subset \mathbb{D}_{\infty}$ is disjoint from $K\times C$. Assume that the following conditions are satisfied:
\begin{enumerate}[(a)]
\item  $P^{(n)}_i\cap P^{(n)}_{i'}=\emptyset$ for $i\neq i'$, 
\item For every $n$ and every $i$ the set $P^{(n+1)}_i$ is a cylinder defined by cubes in $S_{k_{n+1}}$ and there is $j$ such that $P_i^{(n+1)}\subset P_j^{(n)}$,
\item If $i<j$ and $P_i^{(n+1)}\subset P_{i'}^{(n)}$ and $P_j^{(n+1)}\subset P_{j'}^{(n)}$ then $i'\leq j'$, 
\item If we denote $P^{(n)}=\limsup_{i\to \infty} P_i^{(n)}$ then 
\begin{equation}\label{eq:(d)}
\lim_{n\to\infty}d_H(P^{(n)}, K\times C)=0.
\end{equation}
\end{enumerate}

We introduce a decreasing sequence $(O_k)_{k\in\N}$ of neighborhoods of the Cantor set
$K\times C \subset \mathbb{D}_{\infty}$ such that
$$\bigcap_{k\in\N} O_k = K \times C.$$
The neighbourhoods $(O_k)_{k\in\N}$ will be used to perform the “waste collection”. Roughly speaking, at the k-th step of the construction, we will make sure that the orbit of any point that is not in $O_k$ falls in some waste bin after some time.

\begin{itemize}
\item[$\mathbf{C_7}.$]
For $n> 0$ let $X\to\ldots\to X'=\mainmap^q(X)$ for some $q\geq 0$ be a path in the graph $\mathcal{G}(\E_{(n)}^n)$ with $X,X'\in \E_{(n)}^0$, which is not a
path in the graph $\mathcal{G}(\E_{(n)}^{n-1})$. Then 
$$G^q_n(\psi_{n+1}^{-1}(\rs(\E_{(n+1)}^0)\cap X) \setminus O_n)\subset
\bigcup_{i\in\N}P_i^{(n)}.$$
\item[$\mathbf{C_8}.$]
For $n> 0$ let $X\to\ldots\to X'=\mainmap^q(X)$ for some $q\geq 0$ be a path in the graph $\mathcal{G}(\E_{(n)}^n)$ with $X,X'\in \E_{(n)}^0$. Then for every
$i\in\N$,
$$G^q_n(\psi_{n+1}^{-1}(\rs(\E_{(n+1)}^0)\cap X )\cap P_i^{(n)})\subset
\bigcup_{i'>i}P_{i'}^{(n)}.$$
\end{itemize}

\section{Proof of Theorem~\ref{thm:main}}\label{sec:mainproof}

In this section, we will provide a construction of maps satisfying the conditions from Section~\ref{sec:DRtechnique}. 
Before we can start, we shall need the following two lemmas. The first of them is \cite[Lemma 9.6]{Cro} and the second one is \cite[Corollary A.3]{Cro}.
\begin{lem}\label{lem:diff}
Let $W_1$ and $U_1$ be finite unions of pairwise disjoint closed topological balls tamely embedded in $\R^d$. Assume that every connected component of $U_1$ meets $\R^d \setminus W_1$. Let $Z$ be a compact set inside the interior of $U_1$.
Then there exists a set $W_1'$ which is again a finite union of pairwise disjoint closed topological balls tamely embedded in $\R^d$, which contains $W_1\setminus U_1$ and which does not intersect $Z$.
\end{lem}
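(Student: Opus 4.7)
The strategy is to leave $W_1$ unchanged outside $U_1$ and modify it inside each component $U$ of $U_1$, replacing the part of $W_1$ lying in $U$ by a thin collar extension of $W_1 \cap \partial U$ that is kept disjoint from $Z$. Since the components of $U_1$ are pairwise disjoint, it is enough to handle a single component $U$, so I fix one.

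\textbf{Key steps.} Since $Z \subset \Int U$ is compact, $\mathrm{dist}(Z, \partial U) > 0$, so tameness of $U$ allows the choice of a closed collar $N \cong \partial U \times [0,\varepsilon]$ of $\partial U$ inside $U$ with $N \cap Z = \emptyset$. Using tameness of the $B_i$, put the spheres $\partial B_i$ in general position with respect to $\partial U$. The hypothesis that $U$ meets $\R^d \setminus W_1$ forces $U \not\subset W_1$; since $U$ is connected and the balls of $W_1$ are pairwise disjoint, no single $B_i$ can contain $U$, so every $B_i$ meeting $U$ either sits in $\Int U$ or crosses $\partial U$ transversally. Define $W_1'$ by: keeping unchanged every $B_i$ disjoint from $U$; discarding every $B_i$ contained in $\Int U$; and for every $B_i$ crossing $\partial U$, replacing each component $C$ of $B_i \setminus \Int U$ by $C \cup \bigl((C \cap \partial U) \times [0, \varepsilon/2]\bigr)$ under the collar identification. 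Then $W_1' \supset W_1 \setminus U_1$ by construction, and since each collar extension lies in $N$ while the retained pieces lie outside $\Int U$, we have $W_1' \cap Z = \emptyset$.

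\textbf{Main obstacle.} The main technical difficulty is to verify that $W_1'$ is again a finite disjoint union of tamely embedded balls. For each $B_i$ crossing $\partial U$ transversally, a Schoenflies-type argument in the tame category is needed to show that every component of $B_i \setminus \Int U$ is a tame ball with a distinguished face on $\partial U$; attaching a short collar along this face then produces another tame ball. Preserving pairwise disjointness further requires choosing $\varepsilon$ uniformly small enough that the finitely many collar extensions coming from distinct $B_i$'s (or distinct components of the same $B_i$) do not overlap, which is possible since the traces $B_i \cap \partial U$ form a finite family of pairwise disjoint compact sets in $\partial U$.
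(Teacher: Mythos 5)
The paper does not prove this lemma itself---it cites it directly as \cite[Lemma~9.6]{Cro}---so there is no in-paper argument to compare against; I will therefore assess your attempt on its own terms.

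There is a genuine and fatal gap at the step you yourself flag as the ``main obstacle,'' namely the claim that each component of $B_i \setminus \Int(U)$ is a tame ball. This is false already for $d=3$: take $B_i = [0,3]^3$ and let $U$ be a thin cylinder (a tame $3$-ball) that pierces $B_i$ straight through, entering and exiting on opposite faces. Then $B_i \setminus \Int(U)$ is connected and is a genus-one solid handlebody (a cube with a tunnel bored through it), not a ball. No ``Schoenflies-type argument'' can fix this because the object in question is simply not a ball; Schoenflies controls topological spheres that bound, whereas here the difficulty is that the \emph{topology} of the complementary piece is wrong. In higher dimensions the situation is only worse, and additionally the appeal to ``general position'' of topological spheres $\partial B_i$ with respect to $\partial U$ is itself delicate in the topological category for $d\ge 4$. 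A further, smaller, problem: the ``face'' $C\cap\partial U$ of a component $C$ may itself be disconnected, so even when $C$ happens to be a ball the collared set $C\cup\bigl((C\cap\partial U)\times[0,\varepsilon/2]\bigr)$ need not be.

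The deeper issue is that cutting $W_1$ along $\partial U_1$ is the wrong move: the pieces carry uncontrollable topology. The standard way to obtain $W_1'$ is \emph{not} to dissect $W_1$ but to move it by an ambient homeomorphism: one constructs, for each component $U$ of $U_1$, a homeomorphism $h$ of $\R^d$ supported in $U$ (identity near $\partial U$ and outside $U$) with $h(Z\cap U)\cap W_1=\emptyset$, which is possible because $U$ is a ball and, by hypothesis, $U\setminus W_1$ is nonempty and open. Setting $W_1':=h^{-1}(W_1)$ then gives a finite union of pairwise disjoint tame balls (image of such under a homeomorphism), it contains $W_1\setminus U_1$ because $h$ fixes that set pointwise, and $W_1'\cap Z=\emptyset$ by construction. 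This bypasses the topology of $B_i\setminus \Int(U)$ entirely, which your cut-and-collar approach cannot.
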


\begin{lem}\label{lem:Pjs}
Let $X,X'$ be two copies of the unit cube $[0,1]^d$, and $\alpha$ a homeomorphism between $X$ and $X'$. Let $\Sigma,\Sigma'$ be two totally disconnected tamely embedded compact sets in $\Int(X)$ and $\Int(X')$ respectively, and $\beta$ be a homeomorphism between $\Sigma$ and $\Sigma'$.
Let also $(B_j)_{j\geq0}$ and $(B'_i)_{i\geq0}$ be two sequences of pairwise disjoint topological closed balls, respectively in $\Int(X),\Int(X')$ such that
\begin{enumerate}
    \item the $B_j$'s are disjoint from $\Sigma$, the $B'_i$'s are disjoint from $\Sigma'$; 
    \item $\limsup_{j\to\infty}B_j \subset \Sigma$;
\item  $\limsup_{i\to\infty} B'_i=\Sigma'$;
\item  each $B_j$ is tamely embedded in $X$.
\end{enumerate}
Let $\varphi: \N \to \N$ be any function. Then there exists a homeomorphism $\gamma$ between $X$ and $X'$, which coincides with $\beta$ on $\Sigma$ and with $\alpha$ on the boundary of $X$, and for every $j\geq 0$ there exists an $i > \varphi(j)$ such that $\gamma(B_j) \subset B_i'$.
\end{lem}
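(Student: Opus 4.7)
The plan is to construct $\gamma$ as a uniform limit of a sequence $\gamma_n = \Theta_n \circ \gamma_{n-1}\colon X \to X'$ obtained by iterative ambient modifications in $X'$. First, I would produce an initial extension $\gamma_0\colon X \to X'$ of $\beta$ that restricts to $\alpha$ on $\partial X$. The existence of $\gamma_0$ reduces to the geometric-topology fact that any homeomorphism between two tame, totally disconnected compacta $\Sigma, \Sigma'$ in the interior of $d$-cubes extends to an ambient homeomorphism of the cubes with a prescribed boundary behaviour $\alpha$; this uses standard extension results for tame embeddings of totally disconnected sets combined with the Alexander trick for extending $\alpha$ over the cube.

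The second step is the main inductive construction. At stage $n \geq 1$, assume $\gamma_{n-1}$ has been built together with indices $i_0, \ldots, i_{n-1}$ such that $\gamma_{n-1}(B_j) \subset \Int B'_{i_j}$ for every $j < n$ and $i_j > \varphi(j)$. Using condition (3), which supplies infinitely many indices $i$ with $B'_i$ arbitrarily close to any chosen point of $\Sigma'$, pick $i_n > \varphi(n)$ so that $B'_{i_n}$ is disjoint from $B'_{i_0} \cup \cdots \cup B'_{i_{n-1}}$ and, whenever $B_n$ is close to $\Sigma$, also so that $B'_{i_n}$ lies within distance $2^{-n}$ of $\beta(x_n)$, where $x_n \in \Sigma$ is a limit point of the sequence $(B_n)$. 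Next, select a path $\tau_n$ in $X' \setminus (\Sigma' \cup \partial X' \cup \bigcup_{j<n} B'_{i_j})$ joining $\gamma_{n-1}(B_n)$ to $B'_{i_n}$; such a path exists because $\Sigma'$ is totally disconnected and tamely embedded, so it does not separate the open complement of the finite forbidden set (when $d \geq 2$; the case $d = 1$ admits an easier direct argument). Let $\Theta_n$ be an ambient homeomorphism of $X'$ supported in a thin tubular neighbourhood of $\tau_n$, equal to the identity on $\Sigma' \cup \partial X' \cup \bigcup_{j<n} B'_{i_j}$ and pushing $\gamma_{n-1}(B_n)$ into $\Int B'_{i_n}$. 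Set $\gamma_n = \Theta_n \circ \gamma_{n-1}$.

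The final step is convergence. For $n$ large, $B_n$ is close to $\Sigma$ by condition (2), so $\gamma_{n-1}(B_n)$ is close to $\beta(x_n) \in \Sigma'$ (all previous $\Theta_j$ fix $\Sigma'$ and $\gamma_0$ is continuous), while the choice of $i_n$ places $B'_{i_n}$ close to the same point $\beta(x_n)$; hence the tube $\tau_n$ can be taken of diameter at most $2^{-n}$. For the finitely many small $n$ where $B_n$ may be far from $\Sigma$, the support of $\Theta_n$ can be large, but this causes no issue. Since $\sum 2^{-n} < \infty$, both $\gamma_n$ and $\gamma_n^{-1}$ are Cauchy in the uniform topology, so the limit $\gamma$ is a homeomorphism. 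By construction $\gamma|_{\partial X} = \alpha$, $\gamma|_\Sigma = \beta$, and $\gamma(B_n) \subset B'_{i_n}$ with $i_n > \varphi(n)$.

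The main obstacle I expect is the coordination between the shrinking size of $\supp \Theta_n$ and the lower-bound constraint $i_n > \varphi(n)$, where $\varphi$ may grow arbitrarily fast. It is crucial that condition (3) is the \emph{equality} $\limsup B'_i = \Sigma'$ and not just an inclusion: this guarantees that near any point of $\Sigma'$ and beyond any prescribed index $\varphi(n)$ there is still a $B'_i$ available. A secondary technical point is the initial extension $\gamma_0$, which, while standard, relies on classical ambient-isotopy theorems for tame embeddings of totally disconnected compacta in cubes.
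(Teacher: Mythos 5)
The paper does not prove this lemma itself; it simply cites it as Corollary~A.3 of \cite{Cro}, so there is no internal proof to compare against. Evaluated on its own, your outline has a concrete gap in the convergence step. You claim that for large~$n$ the set $\gamma_{n-1}(B_n)$ is small and close to $\beta(x_n)\in\Sigma'$, so a tube $\tau_n$ of diameter $\le 2^{-n}$ suffices, and you justify this by saying the earlier $\Theta_j$'s fix $\Sigma'$ and $\gamma_0$ is continuous. But $\Theta_j$ fixing $\Sigma'$ pointwise does not control $\gamma_{n-1}$ near $\Sigma'$: the tubes $\tau_j$ run arbitrarily close to $\Sigma'$, so $\gamma_0(B_n)$ may intersect the supports of many earlier $\Theta_j$'s, and the modulus of continuity of $\gamma_{n-1}=\Theta_{n-1}\circ\cdots\circ\Theta_1\circ\gamma_0$ can degrade without bound as $n$ grows. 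Small displacement of each $\Theta_j$ controls $\|\gamma_n-\gamma_{n-1}\|_\infty$ but not $\diam\gamma_{n-1}(B_n)$ or $\mathrm{dist}(\gamma_{n-1}(B_n),\Sigma')$, which is what you need to size the next tube. The argument is circular: you need the supports small to get convergence, and you need convergence (so moduli stabilize) to know you can take the supports small. The obstacle you flag (coordinating the tube size with $i_n>\varphi(n)$) is the easy one, handled directly by hypothesis~(3); the modulus issue is the real one and is asserted rather than argued.

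The standard repair is to organize the piping along the tree structure of a tame totally disconnected set rather than in index order. Writing $\Sigma=\bigcap_n\bigcup_l Q^n_l$ and $\Sigma'=\bigcap_n\bigcup_m Q'^{\,n}_m$ with disjoint tame cubes refining as $n$ increases and chosen compatibly with $\beta$, hypothesis~(2) guarantees that for each $n$ only finitely many $B_j$ lie outside $\bigcup_l Q^n_l$. At stage $n$ one pipes only those $B_j$ sitting in $\bigcup_l Q^n_l\setminus\bigcup_{l'}Q^{n+1}_{l'}$, with $\Theta_n$ supported in $\bigcup_m Q'^{\,n}_m\setminus\bigcup_{m'}Q'^{\,n+1}_{m'}$; then the later stages take place in regions the earlier $\Theta_j$'s never touched, so $\gamma_{n-1}$ coincides with $\gamma_0$ on the cubes currently being processed and the circularity disappears. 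Your proposal lacks this ordering and should not be accepted as written.
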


We shall also need the following theorem that seems similar to Lemma~\ref{lem:Pjs} at first sight. However, the main difference is that we want to control the exact way in which the cubes are transformed, not only their proper embeddings. The reason for this requirement is that
we have to control images of some pseudo-arcs inside of these cubes, making sure that they are sent into 
selected ones, which we use as a kind of markers. This makes the construction much more difficult, and requiring the use of more advanced techniques. The technical, but folklore, proof is contained in the Appendix.

\begin{theorem}\label{thm:Kozlowski}
Let $X$,$Y$ be two $d$-dimensional cubes and $\alpha$ an orientation preserving homeomorphism from $X$ onto $Y$.  
For  $i=1,\dots, n$ let 
$B_i$ be pairwise disjoint tamely embedded $d$-dimensional cubes in $\Int(X)$, let $C_i$ be pairwise disjoint tamely embedded $d$-dimensional cubes in $\Int(Y)$, and let $\beta_i$ be orientation preserving homeomorphisms from $B_i$ onto $C_i$. 
Then there is a homeomorphism $\gamma$ from $X$ onto $Y$ that coincides with each $\beta_i$ on $B_i$ and with $\alpha$ on the boundary of $X$.
\end{theorem}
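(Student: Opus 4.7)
I would prove Theorem~\ref{thm:Kozlowski} by induction on $n$, with the substantive content concentrated in the case $n=1$. To make the induction clean, I would first establish a slightly strengthened $n=1$ statement: the resulting $\gamma$ may be taken to equal $\alpha$ outside an arbitrarily preassigned neighborhood $U$ of $B$ in $X$ (and a corresponding neighborhood of $C$ in $Y$). The inductive step then adds cubes one at a time: at stage $n$, choose a small neighborhood of $B_n$ disjoint from $B_1,\ldots,B_{n-1}$ whose image under the previously constructed $\gamma_{n-1}$ is disjoint from $C_1,\ldots,C_{n-1}$ and contains $C_n$ in its interior (possible by tameness and disjointness), and apply the strengthened $n=1$ case inside this neighborhood.

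\textbf{The case $n=1$.} Suppose $\alpha\colon X \to Y$, $B \subset \Int(X)$, $C \subset \Int(Y)$ and $\beta\colon B \to C$ are as in the statement with $n=1$. I would argue in three steps. (i) Because $\alpha$ and $\beta$ are orientation preserving, the tame $d$-cubes $\alpha(B)$ and $C$ inherit compatible orientations from $Y$; any two such cubes in $\Int(Y)$ can be related by an ambient isotopy $\{\phi_t\}$ of $Y$ supported in $\Int(Y)$, a consequence of the Annulus/Schoenflies theorem in the locally flat topological category. Replacing $\alpha$ by $\phi_1\circ\alpha$, I may assume $\alpha(B)=C$ as sets. (ii) The discrepancy $\delta := \beta\circ(\alpha|_B)^{-1}\colon C \to C$ is an orientation preserving self-homeomorphism of the cube $C$. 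By local flatness of $C$ in $Y$, choose a tame collar $\kappa\colon \partial C \times [0,1]\hookrightarrow Y\setminus\Int(C)$ inside a small neighborhood of $C$. (iii) Pick an isotopy $\{\delta_t\}_{t\in[0,1]}$ of $\partial C$ with $\delta_0=\delta|_{\partial C}$ and $\delta_1=\id_{\partial C}$, and define a self-homeomorphism $\hat\delta$ of $Y$ by
\[
\hat\delta\bigl(\kappa(x,t)\bigr) := \kappa\bigl(\delta_t(x),t\bigr), \qquad \hat\delta|_C := \delta, \qquad \hat\delta := \id \text{ elsewhere}.
\]
Then $\gamma := \hat\delta\circ\alpha$ satisfies $\gamma|_{\partial X}=\alpha|_{\partial X}$ (since the support of $\hat\delta$ avoids $\partial Y$) and $\gamma|_B=\delta\circ\alpha|_B=\beta$, as required.

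\textbf{Main obstacle.} The decisive technical input is the existence of the isotopy $\{\delta_t\}$ in step (iii), i.e.\ the connectedness of $\mathrm{Homeo}^+(S^{d-1})$. This is classical for $d\leq 3$, holds in dimension $4$ via Smale-conjecture-type results (Hatcher for the smooth case plus Moise-type smoothing in low dimensions), and in dimensions $\geq 5$ is a consequence of the Kirby--Siebenmann--Quinn stable homeomorphism theorem. Step (i) similarly relies essentially on the locally-flat annulus theorem to ambiently isotope one tame cube onto another, while step (ii) uses that tame embeddings admit collars. Tameness cannot be dropped anywhere in this scheme: without local flatness the ambient isotopy in (i) can fail, the collar in (ii) need not exist, and the boundary homeomorphism $\delta|_{\partial C}$ in (iii) need not even be defined in a well-behaved way.
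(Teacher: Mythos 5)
Your proposal is correct, but it follows a genuinely different route from the paper's Appendix A argument, and it is worth comparing the two. The paper does \emph{not} work by induction on $n$: after the Annulus and Isotopy Theorems (both consequences of the Stable Homeomorphism Theorem), it first proves an auxiliary lemma handling a single pair $(B,C)\subset\Int(D)$, by identifying $D\setminus\Int(B)$ and $D\setminus\Int(C)$ with the standard annulus $A[1,2]$ and interpolating the two sphere maps via the Isotopy Theorem. It then handles all $n$ cubes simultaneously: normalize by radial maps so that every $\varphi(B_i)$ lies in an inner annulus $A[r,s]$ and every $\psi(C_j)$ lies in an outer annulus $A[t,u]$ (both inside $D[1]$), choose disjoint disks $E_i\subset\Int(D[1])$ with $\varphi(B_i)\cup\psi(C_i)\subset\Int(E_i)$, apply the lemma inside each $E_i$, and finally use the Isotopy Theorem once more on $A[1,2]$ to match the resulting map with $\psi\circ\alpha\circ\varphi^{-1}$ on $\partial D[2]$. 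Your approach instead inducts on $n$ with a strengthened $n=1$ statement supported in a prescribed neighborhood: ambiently isotope $\alpha(B)$ onto $C$ (locally flat Annulus/Schoenflies), then kill the residual self-homeomorphism $\delta$ of $C$ by the Alexander-trick-on-a-collar device, using that $\delta|_{\partial C}$ is isotopic to the identity in $\mathrm{Homeo}^+(S^{d-1})$. The deep inputs are ultimately the same in both arguments (Stable Homeomorphism $\Rightarrow$ Annulus and Isotopy, plus Brown's collar theorem); note that both proofs in fact need the Isotopy Theorem for $S^{d-1}$, so for the $d=4$ case relevant to the paper the input is the $3$-dimensional Stable Homeomorphism Theorem (Moise), not Hatcher's Smale conjecture as you suggest --- a minor overstatement on your part. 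What your induction buys is a relative statement ($\gamma$ equal to $\alpha$ outside a prescribed region) and a smaller working neighborhood (a collar rather than a full complementary annulus); what the paper's direct construction buys is avoiding the inductive bookkeeping and making the choice of separating boxes $E_i$ transparent via the concentric-annuli normalization. Your inductive step does implicitly require that $B_n$ and $\gamma_{n-1}^{-1}(C_n)$ sit together inside a tame cube disjoint from $B_1,\dots,B_{n-1}$; this is true but deserves a sentence (connect them by a thin tube and thicken), and is the analogue of the paper's unproved assertion that the disjoint disks $E_i$ exist.
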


Let us explain the difference in the terminology used in Theorem \ref{thm:Kozlowski} and Theorem~\ref{thm:A1}. Although the formulations are not identical, they are equivalent. The slight change in terminology between the main text of the paper and the appendix should not cause many problems, but will allow easier navigation through the literature supporting statements in the appendix.

The definition of a flat cell in Euclidean space may be found in \cite{DevVen}, although the definition had been known for many years{\black: a} $k$-cell or a $(k-1)$-sphere $X$ in $\mathbb{R}^n$ is {\em flat} if there is a homeomorphism $h$ of $\mathbb{R}^n$ such that $h(X)$ is the unit sphere of its type. Note that the definition of a tamely embedded cube is the same as the definition of a flat cell. To stay in consistency with \cite{Cro} we decided to keep the terminology from there. 
The standard definition of locally flat goes back to the summary paper \cite{BrownGluck-BAMS}.
Let $M^n$ be a connected topological manifold of dimension $n$ and $D^n$ an $n$-dimensional cube. An $n-1$-dimensional topological sphere $\Sigma^{n-1}\subset \mathbb{S}^n$ is said to be \emph{locally flat} if for each point $x\in \Sigma^{n-1}$ there is a neighbourhood $U_x$ of $x$ in $\mathbb{S}^{n}$ and a homeomorphism $f$ of $U_x$ into $\mathbb{S}^{n}$ so that $f(U_x)\subset \Sigma^{n-1}$. An embedding $f: D^n\to M^n$ is said to be \emph{locally flat} if $f(\partial D^n)$ is locally flat. For an $\Sigma^{n-1}\subset \mathbb{R}^n$ it follows from \cite{Brown-LFE} that locally flat $\implies$ collared, and from \cite{Brown-GST} that collared $\implies$ flat. The flattening homeomorphism carries the source $n$-cell into the standard $n$-cell. Note also that flat implies locally flat so in fact all three notions are equivalent (see \cite{Brown-LFE}).

We will also need the following result {\black which is a combination of} {\black\cite[Theorem~1]{Bryant} and \cite[Remark~2]{BryantSumners}}. Recall that a homeomorphism $h$ is an {\em $\eps$-push} of $(\mathbb{R}^{\black6},X)$ if some isotopy of $h$ to the identity reduces to the identity outside the $\eps$-neighborhood of $X$, and moves each point of $\mathbb{R}^{\black6}$ along a path of diameter less than $\eps$. 
\begin{theorem}
Suppose that $P$ is a $1$-dimensional compactum in a {\black5}-dimensional hyperplane {\black$Q$} in $\mathbb{R}^{\black6}$, that $\eps>0$, and that $f:P\to {\black Q}$ is an embedding such that $d(x,f(x))<\eps$ for each $x\in P$. Then there exists an $\eps$-push $h$ of $(\mathbb{R}^{\black6},P)$ such that $h|_{P}=f$.
\end{theorem}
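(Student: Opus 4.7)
The plan is to exploit the high codimension of $P$ (a $1$-dimensional compactum has codimension $3$ in $\mathbb{R}^4$) and build the push via a lift--slide--drop construction along the dimension transverse to the hyperplane. Write $\mathbb{R}^4 = \mathbb{R}^3 \times \mathbb{R}$ and set $\eps' = \eps/3$. The goal is to produce three isotopies $\{H^1_t\}, \{H^2_t\}, \{H^3_t\}$ of $\mathbb{R}^4$, each an $\eps'$-push supported near $P \cup f(P)$, whose concatenation $H$ realizes $f$ on $P$.

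For $H^1$ I would use a bump function $\lambda \colon \mathbb{R}^4 \to [0,1]$ equal to $1$ on a small neighborhood of $P$ and $0$ outside its $\eps'$-neighborhood, and translate in the positive fourth coordinate by $t\lambda\eps'$. Then $H^1_1$ is the identity outside the $\eps'$-neighborhood of $P$ and sends $(x,0)$ to $(x,\eps')$ for $x \in P$. Symmetrically, $H^3$ is a drop in the $\eps'$-neighborhood of $f(P)$ taking $(y,\eps')$ to $(y,0)$ for $y \in f(P)$. The content lies in $H^2$: inside the parallel slice $\mathbb{R}^3 \times \{\eps'\}$, which is disjoint from the target slice $\mathbb{R}^3 \times \{0\}$ where $f(P)$ lives, I need to construct an ambient isotopy of $\mathbb{R}^4$, supported in a neighborhood of $(P \cup f(P)) \times [0,\eps']$, that carries $(x,\eps')$ to $(f(x),\eps')$ by paths of diameter less than $\eps'$.

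The main obstacle is constructing $H^2$. Here I would PL-approximate: pick a polyhedral $1$-complex $Q \subset \mathbb{R}^3$ that is a $\delta$-approximation of $P$, and a PL embedding $f' \colon Q \to \mathbb{R}^3$ within $\delta$ of $f$ on $Q$, with $\delta \ll \eps'$. It then suffices to realize $f'$ by an $\eps'$-push and pass to the limit by sending $\delta \to 0$, using Arzel\`a--Ascoli-type compactness of uniformly small, uniformly supported pushes. For the PL problem, move the edges of $Q$ to those of $f'(Q)$ one at a time: for an edge $e$ of $Q$ with target $f'(e)$, the two are close line segments with (after a preliminary vertex adjustment) common endpoints, and the straight-line homotopy between them sweeps a thin, possibly singular, rectangle in $\mathbb{R}^3$. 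A small generic perturbation into the fourth coordinate promotes this rectangle to an embedded $2$-disk $D_e \subset \mathbb{R}^4$ which, by general position ($\dim D_e + \dim Q - \dim \mathbb{R}^4 = 2 + 1 - 4 = -1$), is disjoint from the other edges of $Q \cup f'(Q)$. A standard push across $D_e$ realizes the move $e \mapsto f'(e)$ as an $\eps'$-push supported in a small tube around $D_e$. Concatenating these local pushes over all finitely many edges of $Q$, with careful bookkeeping so that the cumulative path diameters remain below $\eps'$, produces the desired $H^2$, completing the construction.
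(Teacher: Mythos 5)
The paper does not prove this statement: it is quoted directly from Bryant and Sumners \cite{BryantSumners} and used as a black box (the subsequent Theorem~\ref{thm:extension-single-cube} is just a topological restatement of it). So there is no internal proof to compare against, and your proposal must be judged on its own terms.

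The lift--slide--drop decomposition and the general-position dimension count in the $H^2$ step are the right intuitions for the \emph{polyhedral} version of the problem, but there is a genuine gap at the PL-approximation and limiting stage. You replace $P$ by a polyhedral $1$-complex $Q$, construct a push $h_\delta$ supported in a small neighbourhood of $Q\cup f'(Q)$ that realizes $f'$ on $Q$, and then propose to send $\delta\to 0$ by Arzel\`a--Ascoli. Two things go wrong. First, each $h_\delta$ is controlled only on the set $Q$, which does not contain $P$: a $1$-dimensional compactum such as a Cantor set, a pseudo-arc, or a Menger curve has essentially no points in common with any polyhedral approximant, so even if $h_\delta\to h$ uniformly there is no mechanism forcing $h|_P=f$; you have controlled the behaviour on a shrinking family of polyhedra \emph{near} $P$, not on $P$ itself. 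Second, a uniform subsequential limit of homeomorphisms need not be a homeomorphism, and a uniform limit of $\eps$-pushes need not be an $\eps$-push; you would need a uniform modulus of continuity for the inverses $h_\delta^{-1}$ and a uniform control of the supporting isotopies, neither of which the one-shot construction supplies. (There is also a local defect in $H^2$: adjacent edges of $Q$ share a vertex, so the disks $D_e$ cannot be made disjoint from the rest of $Q\cup f'(Q)$ at those vertices; this is repairable, but the limiting problem is not.) The real content of the Bryant--Sumners theorem is precisely the passage from the PL picture to an arbitrary compactum, and it is carried out by an \emph{infinite controlled iteration} of pushes with geometrically decreasing supports and motions --- each stage committing permanently to the behaviour on a larger compact piece of $P$, with the tail of the construction summable --- rather than a single approximation followed by a compactness argument. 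Your proposal, as written, does not reach the compactum.
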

Since the result is topological, by appropriate conjugation we can reformulate it as follows.
\begin{theorem}\label{thm:extension-single-cube}
Suppose that $P$ is a $1$-dimensional compactum in a {\black5}-dimensional hyperplane {\black$Q$} in $\mathbb{R}^{\black6}$, that $U$ is a neighborhood of $P$ in $\mathbb{R}^{\black6}$, and that $f\colon P\to {\black Q}$ is an embedding such that $f(P)\subseteq U$. Then there exists a homeomorphism $h:\mathbb{R}^{\black6}\to\mathbb{R}^{\black6}$ such that $h|_{P}=f$ and $h(x)=x$ for all $x\notin U$.
\end{theorem}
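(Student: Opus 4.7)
The plan is to deduce the statement from the preceding Bryant--Sumners $\varepsilon$-push theorem by a standard chain/subdivision argument. The closeness hypothesis on $f$ is removed by factoring $f$ through a sequence of near-identity steps along an isotopy, and the $\varepsilon$-neighbourhood of $P$ is replaced by the arbitrary neighbourhood $U$ by arranging each step's support to lie inside $U$.

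First I would construct an isotopy $F\colon P\times[0,1]\to U$ of embeddings, with $F_0$ the inclusion $P\hookrightarrow\mathbb{R}^3\subset\mathbb{R}^4$ and $F_1=f$, and $F_t(P)\subset U$ for every $t$. Because $P$ has topological dimension $1$ and sits in $\mathbb{R}^4$ in codimension $3$, such an isotopy can be built explicitly: lift $P$ into $U\cap(\mathbb{R}^3\times(0,\infty))$ via a bump function in the fourth coordinate, transport the lifted copy by a straight-line homotopy in the first three coordinates toward $f(P)$ while simultaneously perturbing the fourth coordinate by a continuous function on $P$ chosen generically so that no collisions occur (the incidence locus has dimension at most $2\dim P+1=3$ in the $4$-dimensional target, so a generic perturbation avoids it), and finally bring the copy back down onto $f(P)\subset\mathbb{R}^3$. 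Multiplying the bump and the drift by a cutoff supported in $U$ (and refining if part of the motion threatens to leave $U$) keeps each $F_t(P)$ inside $U$.

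By uniform continuity of $F$ on the compact set $P\times[0,1]$, I would then choose a partition $0=t_0<t_1<\cdots<t_N=1$ so that for every $i$ the displacement $\max_{x\in P}\|F_{t_{i+1}}(x)-F_{t_i}(x)\|$ is less than some $\varepsilon_i>0$ for which the $\varepsilon_i$-neighbourhood of $F_{t_i}(P)$ lies in $U$. Applying the previous theorem (in its tame $1$-dimensional-compactum-in-$\mathbb{R}^4$ version, which follows by standard taming arguments in codimension $\geq 3$, or equivalently in local charts of $\mathbb{R}^4$ straightening $F_{t_i}(P)$ into an $\mathbb{R}^3$ slice) to the near-identity embedding $F_{t_{i+1}}\circ F_{t_i}^{-1}\colon F_{t_i}(P)\to \mathbb{R}^4$ yields an $\varepsilon_i$-push $h_i$ of $\mathbb{R}^4$ supported in the $\varepsilon_i$-neighbourhood of $F_{t_i}(P)$, hence in $U$, and agreeing with $F_{t_{i+1}}\circ F_{t_i}^{-1}$ on $F_{t_i}(P)$. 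The composition $h:=h_{N-1}\circ\cdots\circ h_0$ then satisfies $h|_P=f$ and $h=\mathrm{id}$ outside $U$. The main obstacle is the isotopy construction in the first step: both producing a genuine isotopy rather than merely a homotopy, and keeping $F_t(P)\subset U$ throughout. The second issue is handled by a cutoff, while the first requires the codimension-$3$ general-position perturbation sketched above; a minor additional subtlety is that the intermediate images $F_{t_i}(P)$ no longer sit in the original $\mathbb{R}^3$ slice, so one needs either the tame version of Bryant--Sumners for compacta in $\mathbb{R}^4$ or a chart argument to apply the quoted statement verbatim.
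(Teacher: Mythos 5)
The paper derives Theorem~\ref{thm:extension-single-cube} from the preceding Bryant--Sumners theorem by a single conjugation rather than by chaining small steps, and this is a substantive difference. The idea behind ``appropriate conjugation'' is the following: since $P\cup f(P)$ is compact and lies in $U\cap\mathbb{R}^3$, one chooses a homeomorphism $g$ of $\mathbb{R}^4$, preserving the hyperplane $\mathbb{R}^3$, that carries the nested round balls $B(0,\tfrac{1}{3})\subset B(0,1)$ onto nested topological balls $B_1\subset B_2\subset U$ with $P\cup f(P)\subset B_1$. Then $g^{-1}\circ f\circ g$ moves every point of $g^{-1}(P)\subset B(0,\tfrac{1}{3})$ a distance $<\tfrac{2}{3}$, the Bryant--Sumners $\tfrac{2}{3}$-push is supported in $N_{2/3}\bigl(g^{-1}(P)\bigr)\subset B(0,1)$, and conjugating back by $g$ yields an extension of $f$ supported in $B_2\subset U$. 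This avoids constructing an isotopy of $P$ altogether.

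Your chain-of-pushes strategy aims at the same target, but as written it has a gap the conjugation does not face. The Bryant--Sumners theorem you quote requires \emph{both} $P$ and $f(P)$ to lie in the hyperplane $\mathbb{R}^3$. After your first step the intermediate compacta $F_{t_i}(P)$ leave $\mathbb{R}^3$---this is forced, since avoiding collisions during the slide requires a nonconstant fourth coordinate---and at that point Bryant--Sumners no longer applies verbatim. You flag this and suggest either a ``tame version'' for compacta off the hyperplane or a chart argument, but no such statement is cited and none is supplied; the chart argument would need an ambient homeomorphism flattening $F_{t_i}(P)$ into a hyperplane, which in $\mathbb{R}^4$ (the borderline codimension) is essentially the content one is trying to prove. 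Separately, the proposal does not actually guarantee $F_t(P)\subset U$ for all $t$: a straight-line drift from $P$ toward $f(P)$ can exit $U$, and ``refining if part of the motion threatens to leave $U$'' is a hope, not an argument. (Both proofs also tacitly use that $U$ is better than an arbitrary neighbourhood---e.g.\ connected and engulfing $P\cup f(P)$ in a ball inside $U$; if $U$ were disconnected with $P$ and $f(P)$ in distinct components, no $h$ supported in $U$ could exist. In the paper's application $U$ is the interior of a tamely embedded cube, so this is harmless there.)
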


{\black Since $f$ from Theorem~\ref{thm:extension-single-cube} is identity outside of $U$, we can naturally view it as a homeomorphism on $\mathbb{D}^6\supset U$.}

From now on we will deal with the Diagram~\ref{smalldiagram}, derived from the proof of Theorem \ref{odo}, which represents the ``solenoidal cube'' $\mathbb{D}_\infty$ as the inverse limit $$\mathbb{D}_\infty=\varprojlim(S_n,\zi),$$ with the map $\mainmap$ arising as the limit map in (\ref{mainmap}) in that proof. The pseudo-arc $\Ps_\infty$ is represented here as the inverse limit of a subsystem $$\Ps_\infty=\varprojlim(\Ps_n,\zi|_{\Ps_n}),$$ where $\Ps_n\subset S_n$, for each $n=1,2,3, ...$.

\begin{equ}[ht!]
\begin{equation*}
\resizebox{0.99\hsize}{!}{
\xymatrix{&\Ps_1\subset S_1\ar@{<-}[rr]_{\zi}\ar@{<-}[dl]_{\hat R_1}&&\Ps_2\subset S_2 
\ar@{<-} [rr]_(.35){\zi}\ar@{<-}[dl]_{\hat R_2}&&\Ps_3\subset S_3
\ar@{<-}[dl]_{\hat R_3} \ar@{<--}[rr] && \Ps_\infty\subset\mathbb{D}_\infty \ar@{<-}[dl]_{\hat R_\infty}\\
\Ps_1\subset S_1\ar@{<-}[rr]^{\zi}&&\Ps_2\subset S_2\ar@{<-}[rr]^{\zi}&&\Ps_3\subset S_3  \ar@{<--}[rr]&& \Ps_\infty\subset \mathbb{D}_\infty}}
\end{equation*}
\caption{General setting from Lemma~\ref{lem:main}.}\label{smalldiagram}
\end{equ}

Let $p\in \mathcal{P}_\infty$ be as in Lemma \ref{lem:choiceofKinA} and let $P\subset \mathcal{P}_\infty\setminus\{0_{\infty}\}$ be a {\black pseudo-arc} such that $p\in P${\black; such pseudo-arc $P$ indeed exists, since $\mathcal{P}_\infty$ is hereditarily equivalent}. Then there is a natural homeomorphism \begin{equation}
\rho:\Lambda\times P\to \cl(\bigcup_{j\in\mathbb{N}}\mainmap^j(P))
\end{equation} such that for any closed set $\Lambda'\subset \Lambda$ we have 
\begin{equation}
\rho(\Lambda'\times \{p\})=\Lambda'.
\end{equation}
Let $P_n:=\pi_n(P)$.

\begin{lem}\label{lem:main}
Let the sets $K\subset \Lambda\subset\Ps_\infty$, $p\in P$, and $\{k_{n}\}_{n\geq 0}$ be chosen as mentioned after the proof of Lemma~\ref{lem:choiceofKinA}. Let also $C\subset P$ be a Cantor set, such that $p\in C$. Then there exist sequences $\{\E_{(n)}^0\}_{n=1}^\infty $, $\{h_n\}_{n=1}^\infty$ and $\{H_n\}_{n=1}^\infty$ satisfying conditions $\mathbf{A_{1,2,3}}$, $\mathbf{B_{1,2,3,5,6,7}}$ and $\mathbf{C_{1,2,3,5,6,7,8}}$. 
\end{lem}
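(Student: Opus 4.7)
The construction is carried out by induction on $n\geq 0$, following the blueprint of \cite[Section~3]{Cro} but inside the inverse limit $\spaceX$, with the homeomorphisms $h_n,H_n$ built as lifts through the branched covers $\z_\infty$. At each level one produces four pieces of data simultaneously: a finite family $\E_{(n)}^0$ of pairwise disjoint cylinders $\pi_{k_n}^{-1}(D)$ over tamely embedded cubes $D\subset S_{k_n}\setminus\mathbb{F}_{k_n}$; two homeomorphisms $h_n,H_n\colon\spaceX\to\spaceX$ obtained as lifts of homeomorphisms $\hat h_n,\hat H_n$ on $S_{k_n}$; and a chain cover $\Mm_n$ of $\Ps_\infty$ refining $\Mm_{n-1}$ and containing $\rs(\E_{(n)}^n)$. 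The single-coordinate constructions of $\hat h_n,\hat H_n$ are governed by Theorem~\ref{thm:Kozlowski} (for the enrichment) and Lemma~\ref{lem:Pjs} (for the waste collection), with crookedness added via Lemmas~\ref{tunc:3.2}--\ref{tunc:3.3}.

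\textbf{Base case and choice of $\E_{(n)}^0$.} For $n=0$ set $\E_{(0)}^0=\{\pi_{k_0}^{-1}(D_0)\}$ around $K\times C$, $\psi_0=\id$, and $\Mm_0$ a coarse chain covering $\Ps_\infty$. Assuming the data have been constructed at levels $\leq n-1$, use the Kakutani--Rokhlin-type tower from Lemma~\ref{lem:choiceofKinA}(2) at level $k_n$ to pick one small cylinder in $\E_{(n)}^0$ around each $\psi_{n-1}^{-1}$-image of a point in the finite set $\pi_{k_n}(K)$ lying inside a given rectangle of $\E_{(n-1)}^0$. The inequalities $k_{n+1}>2^{k_n}+3n$ and $s_n+2n+2<2^{k_n}$ guarantee the combinatorial properties required by $\mathbf{A_1}$ (namely $(n+1)$-iterability with acyclic graph and compatibility with $\E_{(n-1)}^0$ for $n$ iterates); item~(4) of Lemma~\ref{lem:choiceofKinA} gives at least two children per parent, forcing $\mathbf{A_2}$; shrinking cubes gives $\mathbf{A_3}$.

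\textbf{Construction of $h_n$, $H_n$, and $\Mm_n$.} Build $\hat h_n$ on $S_{k_n}$ by applying Theorem~\ref{thm:Kozlowski} cube by cube to $\pi_{k_n}(\E_{(n-1)}^{n-1})$: on each cube, require $\hat h_n$ to be the identity on the boundary (yielding $\mathbf{B_1,B_2}$ after lifting) and to realize the prescribed conjugacy so that $g_n$ acts on $K\times C$ as $(\mainmap,\sigma)$, giving $\mathbf{B_5,B_6}$. On the ``free'' part of the cube, precompose with a near-identity $\eps_n$-crooked modification produced by Lemmas~\ref{tunc:3.2}--\ref{tunc:3.3}, so that after lifting $\psi_n^{-1}\Mm_n$ is crooked inside $\psi_{n-1}^{-1}\Mm_{n-1}$. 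Next pick disjoint waste-bin cylinders $P_i^{(n)}\subset\rs(\E_{(n)}^{n-1})\setminus(K\times C)$ accumulating on $K\times C$ and satisfying the nesting conditions (a)--(d), and use Lemma~\ref{lem:Pjs} cube by cube in $S_{k_n}$ to construct $\hat H_n$ which is the identity on the boundaries of rectangles in $\E_{(n)}^{n-1}$, preserves the Cantor set $h_n\circ\psi_{n-1}(\{x\}\times C)$ for every $x\in K$ (securing $\mathbf{C_{1,2,5,6}}$), sends the $g_n^q$-images of $\psi_{n+1}^{-1}(\rs(\E_{(n+1)}^0)\cap X)\setminus O_n$ into $\bigcup_i P_i^{(n)}$ (securing $\mathbf{C_7}$), and shifts $P_i^{(n)}$ into $\bigcup_{i'>i}P_{i'}^{(n)}$ (securing $\mathbf{C_8}$). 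Finally choose the chain $\Mm_n$ via Claim~\ref{lem:embeddP} finer than $\psi_n(\psi_{n-1}^{-1}\Mm_{n-1})$ and of mesh so small that $\mathbf{B_{3,7}}$ hold.

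\textbf{Main obstacle.} The central difficulty is to realize the three demands simultaneously: (i)~the combinatorial enrichment on $K\times C$, (ii)~the waste collection directing stray orbits into $P_i^{(n)}$, and (iii)~the crookedness and mesh control that forces $\psi$ to be monotone with pseudo-arc fibers and keeps the quotient a pseudo-arc. Unlike \cite{Cro}, our basic pieces are matchboxes $D\times C$ rather than discs, so $h_n$ and $H_n$ cannot be constructed in the ambient space directly: they must be lifts through $\z_\infty$ of single-coordinate homeomorphisms on $S_{k_n}$, which is why the supporting cubes are required to lie in $S_{k_n}\setminus\mathbb{F}_{k_n}$, and why Theorem~\ref{thm:Kozlowski} (rather than the weaker Lemma~\ref{lem:Pjs}) is needed to prescribe exact images of finitely many sub-cubes. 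With these four pieces in hand, the hypotheses of Lemmas~\ref{lem:hnconverge} and~\ref{lem:fibres} are met, $\psi$ is a monotone near-homeomorphism with pseudo-arc fibers, and the induction closes, proving the lemma.
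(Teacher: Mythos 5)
Your proposal has the right overall skeleton (induction, cylinders over cubes in $S_{k_n}\setminus\mathbb{F}_{k_n}$, lifts of single-coordinate homeomorphisms through $\z_\infty$, Theorem~\ref{thm:Kozlowski} and Lemma~\ref{lem:Pjs} as the basic building tools, deference to \cite{Cro} for the waste-collection details), but it misses the central mechanism that the paper uses to make the construction close, and it misattributes the source of the crookedness control.

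The key idea missing is the \emph{pseudo-arc shrinking} step in the construction of $h_{n+1}$. In the paper, one tracks a nested sequence of pseudo-arcs $Z_{k_n}\ni p_{k_n}$: at stage $n+1$ one lifts $Z_{k_n}$ to a pseudo-arc $Z_m\subset S_m$ ($m=k_{n+1}$), covers it by a chain of cubes $U_m$, picks the single chain element $V_m$ containing $p_m$, chooses a \emph{smaller} pseudo-arc $A\subset Z\cap\Int V$ with $p\in A$, and builds $\hat h_{n+1}$ as a homeomorphism $U_m\to V_m$ sending $Z_m$ onto $A_m$ (via Theorem~\ref{thm:extension-single-cube}). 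This is what keeps the fibers of $\psi$ connected and makes $\tilde{\mathbb{P}}=\psi^{-1}(\Ps_\infty)$ come out a pseudo-arc: you are always pushing a pseudo-arc inside a small cube onto a smaller pseudo-arc. Your plan of ``applying Theorem~\ref{thm:Kozlowski} cube by cube'' without this internal target describes a generic enrichment but gives no reason the limit should remain a pseudo-arc or that $\psi$ should be monotone. Similarly, $\E_{(n+1)}^0$ is not simply a choice of small cylinders from a Kakutani--Rokhlin tower; it is built from a cube $V'\supset A$ chosen \emph{after} $h_{n+1}$ is constructed, precisely so that $V'\subset V\cap\rs(\mathcal{D}_{n+1})$ and all needed iterates $\mainmap^i(V')$ stay inside $\rs(\mathcal{D}_{n+1})$ — this delayed choice is what makes $\mathbf{B_7}$(c) and the refinement chain work, and it does not appear in your proposal.

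The attribution of the crookedness condition $\mathbf{B_7}$(b) to Lemmas~\ref{tunc:3.2}--\ref{tunc:3.3} is incorrect. Those lemmas are used only in the proof of Theorem~\ref{odo} to build the bonding maps of $\Ps_n$ and force hereditary indecomposability of the spaces $\Ps_n$. In the proof of Lemma~\ref{lem:main}, no new crookedness is introduced: since $\Ps_\infty$ is already a pseudo-arc, any sufficiently fine chain cover $\mathcal{D}_{n+1}$ refining $\mathcal{C}_n$ is automatically crooked inside $\mathcal{C}_n$, and one simply pulls this back via $(h_{n+1}\circ\psi_n)^{-1}$ using the containment $(h_{n+1}\circ\psi_n)^{-1}(\overline{\rs(\mathcal{D}_{n+1})})\subset\psi_n^{-1}(\rs(\Mm_n))$ (which holds because $\supp h_{n+1}\subset\rs(\Mm_n)$). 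No $\eps_n$-crooked perturbation is composed into $h_{n+1}$. Finally, your writeup does not address the $\zeta_\chi$ homeomorphisms of \eqref{CHI}: after extending $\hat h$ along paths in $\mathcal{G}(\E_{(n)}^n)$ there is a consistency problem between components $\chi$, $\chi'$ separated by an element of $\E_{(n+1)}^{n+1}\setminus\E_{(n+1)}^n$, and one must choose the $\zeta_\chi$ recursively along the partial order of components to realize $\mathbf{B_6}$(b). This is a genuine step you would have to supply; without it, $g_{n+1}^q$ need not act as $(\mainmap^q,\sigma^q)$ across the newly introduced edges. One small terminological slip: Lemma~\ref{lem:fibres} gives that the fibers of $\psi$ are connected, not that they are pseudo-arcs, and $\psi$ is monotone rather than a ``monotone near-homeomorphism''.
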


\begin{proof}
We will prove the lemma by induction.
First we construct $\E_{(0)}^0$, $h_0$ and $H_0$.

Let $H_0=h_0=\psi_0=\id$ and $\mathcal{C}_0$ be a chain cover of $\Ps_\infty$ with $\mesh (\mathcal{C}_0)<1$.

We set $Z_{k_0}=P_{k_0}$ and $\E_{(0)}^0=\{\pi_{k_0}^{-1}(D)\}$ for a sufficiently small cube $D\subset S_{k_0}$ containing $P_{k_0}$ in its interior, so that the sets $\hat R_{k_0}^j(D)$ are pairwise disjoint for $j=0,\ldots,2^{k_0}-1$ and $\bigcup_{j=0}^{2^{k_0}-1}\mainmap^j(\pi_{k_0}^{-1}(D))\subset \rs (\mathcal{C}_0)$. 

Recall that $\cl(\bigcup_{n\in\mathbb{N}}\mainmap^n(P))=\pi_1^{-1}(P_1\cup\hat{R}_1(P_1))$.
For every (in fact unique) $Y\in \E_{(0)}^{0}$ we have  $p\in Y$ and by the definition of $\Lambda_0$ in Lemma~\ref{lem:choiceofKinA} we have $K_Y\subset K\subset \pi_{k_0}^{-1}(p_{k_0})$ and therefore $K_Y\times C\subset \pi_{k_0}^{-1}(P_{k_0})$.
If we put $i_Y=0$ then we have
 
\begin{equation}
\psi_0(K_Y\times C)=K_Y\times C\subset \pi_{k_0}^{-1} (\hat{R}^{i_Y}_{k_0}(Z_{k_0}))\subset \text{int} Y.\label{con:zko-start}
\end{equation}
By definition of $k_0$ the graph $\mathcal G(\E_{(0)}^0)$ has no cycle. In fact it has no edge by the definition of $\Lambda_{0}$ in Lemma~\ref{lem:choiceofKinA}.
Observe that conditions $\mathbf{A_{1,2,3}}$, $\mathbf{B_{1,2}}$ and  $\mathbf{C_{1,2}}$ are trivially satisfied for $n=0$ (with exception of $\mathbf{A_{1}}\text{(a)}$, but as we mentioned, the graph $\mathcal G(\E_{(0)}^0)$ has no cycle).
Condition $\mathbf{B_{3}}$ is important for large $n$, so we can ignore it at this point, and $\mathbf{B_{5,7}}, \mathbf{C_{5}}$
are satisfied directly by the definition. Since there is no edge in the graph $\mathcal G(\E_{(0)}^0)$, 
$\mathbf{B_{6}}$ and $\mathbf{C_{6,7,8}}$
are also trivially satisfied. Finally, we may view the identity maps $H_0,h_0$ as extensions of identity on the space $S_{k_0}$
onto $\spaceX$ via maps $\z_\infty$.

Fix $n\geq 0$ and assume that we have sets $\E_{(i)}^0$ satisfying $\mathbf{A_{1,2,3}}$, $\mathbf{B_{1,2,3,5,6,7}}$ and $\mathbf{C_{1,2,3,5,6,7,8}}$ for $i=0,\ldots, n$ and that there exists a pseudo-arc $Z_{k_n}\subset \Ps_{k_n}$ that contains $p_{k_n}$, such that for every $X\in \E_{(n)}^0$ if $K_X\neq\emptyset$ then there is an $i_X$ such that
\begin{equation}
\psi_n(K_X\times C)\subset \pi_{k_n}^{-1} (\hat{R}^{i_X}_{k_n}(Z_{k_n}))\subset X\label{con:paZ}
\end{equation}
and $\hat{R}^j_{k_n}(Z_{k_n})$ are pairwise disjoint for $j=0,...,2^{k_n}-1$. \\
We additionally assume that for each $j<n+1$ there is a closed set $V^+\subset S_{k_j}$ such that $p_{k_j}\in V^+$ and for each $X^*\in \E_{(j)}^0$ there exists a $0\leq t<2^{k_j}$ such that $X^*=\pi_{k_j}^{-1}(\hat R_{k_j}^t(V^+))$.
We also assume that $\psi_n:\spaceX\to \spaceX$ is defined by a homeomorphism of $S_{k_n}$, which is then extended to $\spaceX$ by recursive applications of maps $\z_\infty$.

{\black For any $i\in\mathbb{N}$ recall that $\mathbb{F}_i$ is the set defined in Theorem~\ref{odo} consisting of all the points of $S_{i+1}$} on which $\z_\infty$ is not a local homeomorphism. 

Let $m=k_{n+1}$ and set $Z_m\subset S_m$ to be the connected component of $\pi_m(\pi_{k_n}^{-1}(Z_{k_n}))=\z_\infty^{k_n-m}(Z_{k_n})$ that contains $p_m$. Note that $Z_{k_n}=\z_\infty^{m-k_n}(Z_m)$ and $Z_m$ are pseudo-arcs, which holds by the fact that any pseudo-arc is an acyclic\footnote{A set is said to be acyclic if it is the intersection of a descending family of tamely embeded topological balls.} continuum and $\mathbb{F}_{k_n}\cap Z_{k_n}=\emptyset$, 
hence the defining intersection of topological balls lifts to homeomorphic copies in the pre-image of $\z_\infty$. 

Fix $X\in \E_{(n)}^0$ such that $p\in X$. Note that by definition $\pi_{k_n}(X)\cap\mathbb{F}_{k_n}=\emptyset$, and since $\hat R_{k_n}$ is also a homeomorphism, $\hat R_{k_n}^{j}(\pi_{k_n}(X))\cap \mathbb{F}_{k_n}=\emptyset$ for all $j$. Recall that $\pi_{k_n}(X)$ is a cube containing $Z_{k_n}$ in its interior and intersecting $\hat R_{k_n}^{j}(Z_{k_n})$ if and only if $j=0(\text{mod }2^{k_n})$, with $\hat R_{k_n}^{2^{k_n}}(Z_{k_n})=Z_{k_n}$. Let $X_m$ be a connected component of $\pi_m(X)$ and set $Y:=\pi^{-1}_m(X_m)$. By the construction of $Z_{m}$, the fact that $\psi_n$ is defined coordinate-wise, and  \eqref{con:paZ} we get
\begin{equation}
\psi_n(K_Y\times C)\subset \pi_{m}^{-1} (\hat{R}^{i_Y}_{m}(Z_{m}))\subset \Int Y.\label{con:KY}
\end{equation}

By {\black Theorem~\ref{odo}}, let $\mathcal{W}^\eps_i$ be a finite $\eps$-cover of $\Ps_i$ by closed {\black tamely embedded} cubes,  
that is $\mathcal P_i\subset \bigcup_{W\in \mathcal{W}^\eps_i} \Int W$,
and $\mesh \mathcal{W}^\eps_i<\eps$. 
{\black By Theorem~\ref{odo} we know} that there is a unique element 
$W_{p_i}\in \mathcal{W}^\eps_i$ such that $p_i\in W_{p_i}$.
Set 
$$\mathcal{W}^\eps(i):=\{\pi_i^{-1}(W):W\in \mathcal{W}^\eps_i\}.$$
Clearly, for each $\eps$ and $i$, $\mathcal{W}^\eps(i)$ is a $(\lambda_i(\eps) \eps+1/2^{i})-$cover of $\Ps_\infty$, where $\lambda_i(\eps)$ is a constant provided by uniform continuity of $\z_\infty^i$ and thus clearly $\lim_{\eps\to 0}\lambda_i(\eps)=0$.
In particular, if $\eps$ is sufficiently small, then every element $W\in \mathcal{W}^\eps(m)$ satisfies $\diam W<2^{-m+1}$. {\black Since $m$ is already fixed, we can also assume that $\eps$ is sufficiently small, so that $d_H(Z_m,\mathbb{F}_m)>\eps$.}

Denote by $Z$ the unique connected component of $\pi_{k_n}^{-1}(Z_{k_n})$ containing $p$. Clearly $Z$ is a pseudo-arc and $\pi_m(Z)=Z_m$ {\black is also a pseudo-arc. Let $\mathcal{Q}_m^\eps$ be a chain subcover of $\mathcal{W}^{\eps}_m$ which covers $Z_m$ and is provided by Theorem~\ref{odo}. Preliminarily, denote $U=\bigcup\{\pi_m^{-1}(W):W\in \mathcal{Q}^\eps_m\}$ and set $U_m=\pi_m(U)=\bigcup \mathcal{Q}^\eps_m$; later we will make $U$ and as a consequence $U_m$ smaller sets. The set $U_m$ is the union of elements of chain cover of $Z$ provided by Theorem~\ref{odo}. Hence, if we follow the proof of claim~\ref{clm:extension}, then we see that $U_m$ is a neighborhood of a set $E$ such that $Z_m\subset E=\cup_{i\in \mathcal{I}} D_i^{\leftarrow}\subset U_m$ where $(D_k^{\leftarrow})_{i\in \mathcal{I}}$ is a chain with a finite index set $\mathcal{I}\subset \{0,\ldots,j\}$  ($U$ is disjoint with $\mathbb{F}_m$), therefore $E\supset Z_m$ is a cell-like set contained in $\mathbb{R}^3\times\{0,0,0\}\subset \mathbb{R}^6$ and we can find a tamely embedded cube $\tilde U_m$ in $S_m$ 
such that $Z_m\subset \textrm{int} \tilde U_m\subset U_m$. Then we replace $U$ by $\pi_m^{-1}(\tilde U_m)\cap U\supset Z$ and $U_m$ by $\tilde U_m$.}

If we take sufficiently small $\eps>0$ then

\begin{equation}\label{disjoint}\hat R_{m}^i(U_m)\cap \hat R_{m}^j(U_m)=\emptyset \textrm{ for any } 0\leq i <j <2^m,\end{equation} 

By the assumption on the sets $W_{p_i}$, there is a unique element $V\in \mathcal{Q}^\eps_m$ that contains the point $p$. There exists a pseudo-arc  $A\subset Z\cap \Int V\subset \Ps_\infty$ such that $p\in A$.
{\black
Let $V=\pi_m^{-1}(V_m^p)$ where $V_m$ is a tamely embedded cube. Now perform construction similar to previous paragraph obtaining a tamely embedded cube $V'_m\supset \pi_m(A)$ which is small neighborhood of $A$ and define $V'=\pi_m^{-1}(V_m')$. Note that $p\in A\subset \textrm{int} V'$. The set $V_m'$ and as a consequence $V'$, has to be sufficiently small neighborhood of $\pi_m(A)$, which will be specified later in the proof.}

 
 Let $V_j:=\pi_j(V)$ for each $j$ and observe that $V_j$ is a cube, for $j\leq m$. 
Since $\pi_{k_n}(Z)=Z_{k_n}\subset \Int \pi_{k_n}(X)$, we may assume that $V_{k_n}\subset \pi_{k_n}(X)$, and that each of the disjoint sets $\hat R_{k_n}^i(V_{k_n})$
for $i=-n-1,\ldots, s_n+n+1$ either is contained in $\Int(\pi_{k_n}(J))$, for some $J\in \E_{(n)}^{n+1}$, or $\hat R_{k_n}^i(V_{k_n})\cap\pi_{k_n}(\rs (\E_{(n)}^{n+1}))=\emptyset$.

Let $$\tilde{\E}_{(n+1)}^{0}=\{\hat R_{\infty}^i(V):\hat R_{\infty}^i(V)\cap K\neq\emptyset, i=0,...,s_{n+1}\},$$ and 
\begin{equation}
    \E_{(n+1)}^{0}=\{\hat R_{\infty}^i(V'):\hat R_{\infty}^i(V)\cap K\neq\emptyset, i=0,...,s_{n+1}\}.\label{def:En+1}
\end{equation}
By the definition of $s_{n+1}$ and \eqref{def:En+1}, we obtain that $K\subset \rs (\E_{(n+1)}^{0})$.
 
 Note that $\mathbf{A_{3}}$ will be satisfied in the limit if $\eps$ is chosen small enough, and $\mathbf{A_{2}}$ is satisfied by the choice of the set $K$
 (see Lemma~\ref{lem:choiceofKinA}\eqref{lem:choiceofKinA:4}). Condition $\mathbf{A_{1.b}}$ follows from the following observation. 
 Fix $j<n+1$ and let $D\in \E_{(j)}^0$ be a neighborhood of $p$. It follows that $V\subset\Int(D)$, provided that $\eps$ was chosen small enough. For each $i\in\mathbb{Z}$ we have $\mainmap^i(V)\subset \Int \mainmap^i(D)$. 
 By induction hypothesis, each element of $\E_{(j)}^{j+1}$ is an iterate of $\E_{(j)}^{0}$ defined by \eqref{def:En+1}, hence it is of the form $\hat R_{\infty}^i(D)$ for some $i$.

 Note that if $Y\in \E_{(j)}^{0}$ then $Y\cap K\neq \emptyset$, and there is $Y'\in \E_{(n+1)}^{0}$ such that $Y'\cap Y\cap K\neq \emptyset$, since $\E_{(n+1)}^{0}$ covers $K$.
By definition we have $Y=R_{\infty}^i(D)$ and $Y'=R_{\infty}^{i'}(V)$ for some $i\leq s_j$, $i'\leq s_{n+1}$, and $\hat R_{k_n}^{i'}(V_{k_n})\subset\Int(\pi_{k_n}(J))$, for some $J\in \E_{(n)}^{0}$ such that $J\subset Y$.
Consequently, 
 if $\hat R_{\infty}^i(D)\in \E_{(j)}^{j+1}$ then $\hat R_{\infty}^i(V')\in \E_{(n+1)}^{j+1}$, and so each element from $\E_{(j)}^{j+1}$ contains at least one element from $\E_{(n+1)}^{n+2}$. 
 
 Since $p_m$ is $2^m$ periodic under $\hat R_m$, and for each $X^*\in \E_{(j)}^{j+1}$ we have $|\pi_j(X^*)\cap 
 {\black \{
 \hat R_j^s(p_j): s\geq 0\}}|=1$, taking $\eps$ small enough we easily obtain that for every  $X^*\in \E_{(j)}^{j+1}$ and $Y^*\in \E_{(n+1)}^{n+2}$ either $X^*\cap Y^*=\emptyset$, or $X^*\subset \Int(Y^*)$. Indeed $\mathbf{A_{1.b}}$ holds.

Condition $\mathbf{A_{1.a}}$ is clear because elements of $\E_{(n+1)}^{0}$ are defined by $\hat R_{\infty}^i(V')$ and these iterates are disjoint
for $i=0,\ldots, 2^m-1$, and furthermore $\hat R_{\infty}^i(V')\notin \E_{(n+1)}^{0}$ for $i=s_{n+1}+1,\ldots, s_{n+1}+n+2$ and $i=-n-2,\ldots, -1$. So $\E_{(n+1)}^{0}$ is $(n+2)$-iterable, and $\mathcal G(\E_{(n+1)}^{n+1})$ has no cycle.

By conditions $\mathbf{A_{1.a}},\mathbf{A_{1.b}}$, and since it is clear from the definition that $\rs (\E_{(n+1)}^{0})\subset \rs (\E_{(n)}^{0})$, to check $\mathbf{A_{1.c}}$ we only need to check that
$$  
\rs(\E_{(n+1)}^{n+2}) \cap R_\infty^k( \rs(\E_{(n)}^0) ) = R_\infty^k( \rs(\E_{(n+1)}^0) ),
$$
for all $|k|\leq n+1$. 

Condition  $\rs(\E_{(n+1)}^{n+2}) \cap R_\infty^k( \rs(\E_{(n)}^0) )\neq\emptyset $ means that there is an $0\leq i\leq s_{n+1}$, $|j|\leq n+2$ and $|k|\leq n+1$ such that $\mainmap^i(V')\in \E_{(n+1)}^0$ and $\mainmap^{i+j}(V')\cap \mainmap^k(\rs(\E_{(n)}^0)\neq\emptyset$. By the fact that $\mainmap^k(\rs(\E_{(n)}^0))\subset \rs(\E_{(n)}^{n+1})$ and $\E_{(n+1)}^{n+2}$ refines $\E_{(n)}^{n+1}$, there exists $X^*\in \E_{(n)}^{0}$ such that $\mainmap^{i+j-k}(V')\subset X^*$. There are two possibilities. If $\mainmap^{i+j-k}(V')\in \E_{(n+1)}^0$ then $\mainmap^{i+j}(V')\subset \mainmap^k(\rs(\E_{(n+1)}^0))$ and we are done. 
If $\mainmap^{i+j-k}(V')\notin \E_{(n+1)}^0$ then by the fact that $|j-k|\leq 2n+3$ we obtain $\mainmap^{i+j-k}(V')\cap K=\emptyset$. But  $\pi_{k_n}(\mainmap^{i+j-k}(V')\cap\Lambda)\subset \pi_{k_n}(X^*\cap\Lambda)\subset\pi_{k_n}(K)$ by the construction in Lemma~\ref{lem:choiceofKinA}, so $\mainmap^{i+j-k}(V')\cap\Lambda\subset\bigcup_{i\geq n+1}\Lambda_i$. Observe that if 
$x\in \Lambda_{n+i}\setminus \Lambda_n$ then by \eqref{def:AnK} from Lemma~\ref{lem:choiceofKinA}
$$2^{k_{n+i}-1}-4(n+1+i)2^{k_{n+i-1}} \leq x_{k_{n+i}}\leq 2^{k_{n+i}-1}+4(n+1+i)2^{k_{n+i-1}}$$
while 
$$x_{k_{n+i}} (\text{mod }2^{k_n})> 4(n+1)2^{k_{n-1}}$$
and
$$
x_{k_{n+i}} (\text{mod }2^{k_n})<2^{k_n-1}-4(n+1)2^{k_{n-1}}.
$$
Therefore
$$
2^{k_{n+i}-1}-4(n+1+i)2^{k_{n+i-1}}+4(n+1)2^{k_{n-1}} \leq x_{k_{n+i}}\leq 2^{k_{n+i}-1}+4(n+1+i)2^{k_{n+i-1}}-4(n+1)2^{k_{n-1}}
$$
while still $x\in \Lambda_{n+i}$.
This shows that
if $x\in \bigcup_{i\geq n+1}\Lambda_i$ then $\mainmap^r(x)\notin K$ for $|r|\leq 4n+1$.

This implies $\mainmap^i(V')\cap K=\emptyset$, which contradicts the choice of $i$. We have just verified that conditions $\mathbf{A_{1}}-\mathbf{A_{3}}$ hold for $\E_{(n+1)}^0$. 

Now, we are going to define a homeomorphism $\hat h\colon S_m\to S_m$, from which we will obtain $h\colon \mathbb{D}_{\infty}\to \mathbb{D}_{\infty}$.

First, both $U_m$ and $V_m$ are{\black,} by the construction{\black,} tamely embedded cubes, in particular are homeomorphic.
By Theorem~\ref{thm:extension-single-cube} we can compose this homeomorphism with a self-homeomorphism of $V_m$ to obtain
a homeomorphism $\hat h\colon U_m\to V_m$ such that $\hat h(p_m)=p_m$, and $\hat h(Z_m)=A_m$. Note that $\{p_m\}=U_m\cap \pi_m(\Lambda)$, provided that $\eps$ is small enough.

We extend $\hat h^{-1}$ onto pairwise disjoint sets $\hat{R}_m^{i}(V_m)$ for each $i=1,\ldots,2^m-1$, 
such that $\hat R_\infty^i(V)\in \tilde{\E}_{(n+1)}^{0}$,
by the formula 
\begin{equation}
\hat h^{-1}|_{\hat{R}_m^{i}(V_m)}=\hat{R}_m^{i} \circ \hat h^{-1}|_{V_m} \circ \zeta_\chi \circ \hat{R}_m^{-i}\label{CHI}
\end{equation} 
where $\zeta_\chi\colon V_m\to V_m$ is a homeomorphism fixed for the connected component $\chi$ of $\tilde{\E}_{(n+1)}^n$ containing $\hat R_\infty^i(V)$. We will specify this homeomorphism later. At this point we only say that it is obtained by Theorem~\ref{thm:extension-single-cube} as extension of some
homeomorphism between tamely embedded Cantor sets to the whole $V_m$.

The proper definition of $\hat{h}$ is supported by condition \eqref{disjoint}. Note that, for $\eps$ small enough, from \eqref{con:KY} we get
\begin{equation}\label{conIMP}
\textrm{if $\hat{R}_m^{i}(V_m)\subset \pi_m(X^*)$, for some $X^*\in \E_{(n)}^0$, and $\mainmap^i(V)\in \tilde \E^0_{(n+1)}$ then $\hat{R}_m^{i}(U_m)\subset \pi_m(X^*)$. }
\end{equation}

We perform additional extension of the above, when necessary, as follows. Assume that we have an edge $X\to \mainmap (X)$ in graph $\mathcal{G}(\E^n_{(n)})$
and $\hat h^{-1}$ was defined on $\hat{R}_m^{i}(V_m)$ but not on $\hat{R}_m^{i+1}(V_m)$ or vice-versa, where $\pi_m^{-1}(\hat{R}_m^{i}(V_m))\subset X$.
Then we use \eqref{CHI} to define $\hat h^{-1}$ also on the other set. We repeat this procedure several times, until there is no edge in graph $\mathcal{G}(\E^n_{(n)})$ with the above problem. We have to justify that this procedure does not depend on the order of consecutive extensions. 
If $i\in \{0,\ldots, 2^m -1\}$ is such that $\mainmap^i(V)\cap K\neq \emptyset$ and $q>0$ is the smallest number such that $\mainmap^{i+q}(V)\cap K\neq \emptyset$, then by \ref{lem:choiceofKinA:3}. from Lemma~\ref{lem:choiceofKinA} and the 
fact that $q\leq 2n+2$ we see that $i+q\in \{0,\ldots, 2^m -1\}$.
Therefore, if $X,X'\in \tilde \E_{(n+1)}^0$ and there is a path from $X$ to $X'$ in $\mathcal{G}(\E^{n}_{(n+1)})$ then extending $\hat h$ forward from $X$ to $X'$ or backward from $X'$ to $X$, the result will 
be the same, because $X=\mainmap^{i}(V)$ and $X'=\mainmap^{i+q}(V)$. 

By a computation similar to the one verifying $\mathbf{A_{1.c}}$, we know that if $\mainmap^i(V)\in \tilde{\E}_{(n+1)}^{n}$ then there is $X\in \E_{(n)}^{n}$ such that $\mainmap^i(V)\subset \Int X$.
In particular $\hat R_m^i(V_m)\subset \Int \pi_m(X)$. Note that connected components of both $\pi_m(X)$ and $\hat R_m^i(V_m)$ are cubes. In particular, $\hat R_m^i(V_m)$ must be a subset of one of the connected components of $\pi_m(X)$, 
 and by \eqref{conIMP} the cube $\hat R_m^i(U_m)$ is a subset of the same component. We also see that $\hat h^{-1}$ defined so far is injective. 
Now we are going to extend $\hat h^{-1}$ to cubes $\E_{(n)}^n$.
For every maximal path in the graph $\mathcal G (\E_{(n)}^n)$ we use directly Theorem~\ref{thm:Kozlowski} only on the first element $X\in \E_{(n)}^n$ of the path, and then copy it on further elements of the path 
\begin{equation}\label{eq:pathFromX}
X\to \mainmap (X) \to \mainmap^2(X)\to \ldots
\end{equation}
by the procedure analogous to \eqref{CHI}, i.e.
$\hat h^{-1}|_{\hat{R}_m^{i}(X)}=\hat{R}_m^{i} \circ \hat h^{-1}|_{X} \circ \hat{R}_m^{-i}$. 

\begin{figure}
\begin{tikzpicture}[scale=2]
\draw[red](-0.7,-0.7)--(-0.2,-0.7)--(-0.2,-0.2)--(-0.7,-0.2)--(-0.7,-0.7);
\draw[rotate=90](-0.7,-0.7)--(-0.2,-0.7)--(-0.2,-0.2)--(-0.7,-0.2)--(-0.7,-0.7);
\draw[rotate=180](-0.7,-0.7)--(-0.2,-0.7)--(-0.2,-0.2)--(-0.7,-0.2)--(-0.7,-0.7);
\draw[red,rotate=270](-0.7,-0.7)--(-0.2,-0.7)--(-0.2,-0.2)--(-0.7,-0.2)--(-0.7,-0.7);
\draw[-latex] (-0.1,0.45)--(0.1,0.45);
\draw[-latex] (-0.1,-0.45)--(0.1,-0.45);
\node at (0,0.55) {\tiny$\hat R_{\infty}$};
\node at (0,-0.55) {\tiny$\hat R_{\infty}$};
\filldraw[black] (-0.5,-0.5) rectangle (-0.55,-0.55);
\filldraw[black] (-0.35,-0.35) rectangle (-0.4,-0.4);
\filldraw[black] (-0.35,0.5) rectangle (-0.4,0.55);
\filldraw[black] (-0.5,0.35) rectangle (-0.55,0.4);

\filldraw[black] (0.5,-0.35) rectangle (0.55,-0.4);
\filldraw[black] (0.35,-0.5) rectangle (0.4,-0.55);
\filldraw[black] (0.35,0.35) rectangle (0.4,0.4);
\filldraw[black] (0.5,0.5) rectangle (0.55,0.55);

\node at (-1.05,0) {\black\small$\hat h$};

\draw[-latex] (0.525,-0.3)--(0.525,0.45);
\draw[-latex] (-0.375,-0.3)--(-0.375,0.45);
\draw[-latex] (0.375,-0.45)--(0.375,0.3);
\draw[-latex] (-0.525,-0.45)--(-0.525,0.3);
\draw[latex-,red] (-0.8,0.5) arc (90:270:0.5);
\end{tikzpicture}
\caption{{\black Commuting of $\hat h$ and $\mainmap$ along edges of  $\mathcal G(\E_{(n)}^n)$}.}
\end{figure}

This way $\hat h$ and $\mainmap$ will commute along edges of  $\mathcal G(\E_{(n)}^n)$. When applying Theorem~\ref{thm:Kozlowski} to define $\hat{h}\colon X\to X$ we require that $\hat{h}$ is the identity on the boundary.
Our procedure is consistent with previous construction in \eqref{CHI}, namely $\hat h$ defined up to now commutes with $\hat{R}^i_m$ already along edges of $\mathcal G(\E_{(n+1)}^{n})$. So far $\hat h$ is defined on disjoint cubes tamely embedded in $S_m$ and is identity on their boundary, so it extends to $S_m$ as the identity on the complement
of these cubes. By definition of $\hat{h}$, and since $0_m\not\in \pi_m(\rs (\E_{(n)}^n))$, we have that $\hat{h}(p_m)=p_m$ and $\hat{h}(0_m)=0_m$. 
Lifting inductively $\hat h$
to $S_{m+k}$ through $\varphi_\infty$, for each $k$, we obtain a homeomorphism $h_{n+1}\colon \spaceX \to \spaceX$ such that $h_{n+1}(p)=p$.

{\black Now is the time to specify the additional properties of $V'$. 
Note that directly from the construction the conditions $\mathbf{B_1},\mathbf{B_2}$ are satisfied, because $V'\subset V$. Since $h_{n+1}$ is already defined and $\mathcal{C}_n$ is a cover of $\Ps_\infty$ by the definition, we can find a chain cover $\mathcal{D}_{n+1}$ of $\Ps_\infty$ refining $\mathcal{C}_{n}$ such that 
}

\begin{equation}
 \mesh \big( (h_{n+1}\circ \psi_{n})^{-1} (\mathcal{D}_{n+1}) \big)  < 2^{-n-1}. 
\end{equation}

Observe that $\supp h_{n+1}\subset \rs({\Mm_n})$ (where $\supp h_{n+1}$ denotes the support of $h_{n+1}$) which holds by $\mathbf{B_7}(c)$ from step $n$ and the way we constructed $h_{n+1}$; see \eqref{eq:pathFromX}.

Furthermore 
$$
(h_{n+1}\circ \psi_{n})^{-1} (\overline{\rs(\mathcal{D}_{n+1})})\subset \psi_{n}^{-1}(\overline{\rs(\mathcal{D}_{n+1})}\cup \supp h_{n+1})\subset \psi_{n}^{-1}(\rs({\Mm_n}))
$$
and therefore by $\mathbf{B_7}(b)$, for every $k<n+1$ we have  
\begin{equation}
(h_{n+1}\circ \psi_{n})^{-1}( \overline { \rs ( \mathcal{D}_{n+1}) })
\subset  \psi_k^{-1}( \rs ( \Mm_{k}))
\end{equation}
and
$(h_{n+1}\circ \psi_{n})^{-1} (\mathcal{D}_{n+1})$ is crooked inside $\psi_{n}^{-1} (\Mm_{n}).$ Since $A\subset Z\subset \Ps_\infty$ and $\Ps_\infty$ contains all inverse sequences passing through $\pi_m(A)=A_m$, we
can find a set $V'$ as defined before, with the additional property that $A\subset V'\subset V\cap \rs(\mathcal{D}_{n+1})$ and $\mainmap^{i}(V')\subset \rs(\mathcal{D}_{n+1})$ for all $|i|<2^{k_{n+1}}+3(n+1)$.
In particular, it implies that $\rs (\E^{n+1}_{(n+1)})\subset \rs(\mathcal{D}_{n+1})$.
If additionally $H_{n+1}$ is constructed in such a way that $\mathbf{C_{1}}$ holds for $n+1$, then $\mathbf{B_{7}}(a)-\mathbf{B_{7}}(c)$ are satisfied for the chain cover of $\Ps_\infty$ given by $\mathcal{C}_{n+1}=H_{n+1}(\mathcal{D}_{n+1})$.

Next, let $X\in \E_{(n+1)}^{n+2}\setminus \E_{(n+1)}^{n}$.
By the properties of $\mathbf{A_{1.c}}$ we obtain that $X\cap \rs (\E_{(n)}^n)=\emptyset$. Combining this with the definition of $h_{n+1}$ around (\ref{eq:pathFromX}) it follows that 
$$
\psi_n^{-1}\circ h_{n+1}^{-1}(X)= \psi_n^{-1}(X).
$$
This ensures that $\mathbf{B_3}$ holds, provided that $\eps$ was small enough.

If $\mathbf{C_1}$ holds, then $\mathbf{B_5}$ holds as well by \eqref{con:KY}, definition of $V'$ and provided that $\eps$ was chosen to be small enough.

To check $\mathbf{B_6}$, take any $X,X'\in \E_{(n+1)}^0$ such that there is a path in $\mathcal{G}(\E_{(n+1)}^{n+1})$ from $X$ to $X'=\mainmap^q(X)$.
Recall that $\E_{(n+1)}^0$ is compatible with $\E_{(n)}^0$ for $n+1$ iterates and $\E_{(n+1)}^0$ refines $\E_{(n)}^0$, so path in $\mathcal{G}(\E_{(n+1)}^{n+1})$
induces a path on $\mathcal{G}(\E_{(n)}^{n+1})$ between some $Y,Y'\in \E_{(n)}^0$ such that $X\subset Y$ and $X'\subset Y'$.
By definition of $g_{n+1}$ in \eqref{eq:10} we have $g_{n+1}^q=\psi_n^{-1}\circ h_{n+1}^{-1}\circ \mainmap^q \circ h_{n+1}\circ \psi_n$.
For $(x,c)\in K_X\times C$ by \eqref{con:KY} we have that $\psi_n(x,c)\in X$ and $h_{n+1}(X)\subset X$, therefore $h_{n+1}\circ \psi_n(x,c)\in \rs (\E_{(n+1)}^0)$ (cf. definition of $\hat h$ in \eqref{eq:pathFromX}).
If the whole path is contained in $\mathcal G(\E_{(n+1)}^n)$, and so induced path is contained in $\mathcal G(\E_{(n)}^n)$, then we can apply commutativity provided by $\mathbf{B_2}$ and then
$$
\psi_n^{-1}\circ h_{n+1}^{-1}\circ \mainmap^q \circ h_{n+1}\circ \psi_n=\psi_n^{-1}\circ h_{n+1}^{-1}\circ h_{n+1} \circ \mainmap^q \circ \psi_n
$$
which gives by $\mathbf{C_6}$ on $K_Y\times C \supset K_X\times C$ in the step $n$ that
$$
g_{n+1}^q(x,c)=G_n^q(x,c)=(\mainmap^q(x),\sigma^q(c)).
$$
This proves that $\mathbf{B_6}(a)$ holds. Now we will check $\mathbf{B_6}(b)$. It is the first place where definition of $\zeta$ in \eqref{CHI} becomes important. 
It is the place where we will finally specify the precise conditions it must satisfy.
Clearly, there is an isomorphism between $\mathcal G(\tilde{\E}^{n+1}_{(n+1)})$ and $\mathcal G(\E^{n+1}_{(n+1)})$ identifying the vertex $\mainmap^{i}(V)$ with $\mainmap^{i}(V')$, for each
$i\in \{0,\ldots, 2^m -1\}$, such that    $\mainmap^{i}(V)\in \tilde{\E}^{n+1}_{(n+1)}$. Note that if $X,X'$ are consecutive elements of $X,X'\in \E_{(n+1)}^0$ on the path in $\mathcal{G}(\E_{(n+1)}^{n+1})$, and it is not proper path in $\mathcal{G}(\E_{(n+1)}^n)$ then we have two different components $\chi,\chi'$ of $\mathcal G(\E_{(n+1)}^n)$ such that $h_{n+1}^{-1}$ on $X$ is defined by
$$
h_{n+1}^{-1}|_X=\hat{R}_\infty^{i} \circ h_{n+1}^{-1}|_{V} \circ \hat{\zeta}_\chi \circ \hat{R}_\infty^{-i}
$$
while on $X'$ it is defined by
$$
h_{n+1}^{-1}|_{X'}=\hat{R}_\infty^{i+q} \circ h_{n+1}^{-1}|_{V} \circ \hat{\zeta}_{\chi'} \circ \hat{R}_\infty^{-i-q}
$$
where $\hat{\zeta}_{\chi},\hat{\zeta}_{\chi'}$ are extensions to $\mathbb{D}_{\infty}$ of homeomorphisms $\zeta_{\chi}$, $\zeta_{\chi'}$ defined coordinate-wise by commutative diagrams involving $\varphi_{\infty}$.
Let $Y\in \E_{(n)}^0$ be such that $X\subset Y$. Then $X^*=h_{n+1}^{-1}(X)\subset Y$ and by $\mathbf{B_5}$ and provided that additionally $H_{n+1}$ is constructed in such a way that $\mathbf{C_{1}}$ holds, for any
$(x,c)\in K_X\times C$ we have $\psi_{n+1}(x,c)\in X$ and $H_{n+1}(X)=X$, thus
$\psi_n(x,c)\in X^{*}$.

Then by definition 
\begin{eqnarray*}
g_{n+1}^q(x,c)&=&\psi_{n}^{-1}\circ h_{n+1}^{-1}\circ \mainmap^q \circ h_{n+1}\circ\psi_{n}(x,c)\\
&=&\psi_{n}^{-1}\circ h_{n+1}^{-1}|_{X'}\circ \mainmap^q \circ h_{n+1}|_{X^*}\circ\psi_{n}(x,c)
\end{eqnarray*}
But for the middle term we have
\begin{eqnarray*}
h_{n+1}^{-1}|_{X'}\circ \mainmap^q \circ h_{n+1}|_{X^*}&=&\hat{R}_\infty^{i+q} \circ h_{n+1}^{-1}|_{V} \circ \hat{\zeta}_{\chi'} \circ \hat{R}_\infty^{-i-q}\circ \mainmap^q\circ \hat{R}_\infty^{i} \circ \hat{\zeta}_\chi^{-1} \circ (h_{n+1}^{-1}|_{V})^{-1} \circ \hat{R}_\infty^{-i}\\
&=&\hat{R}_\infty^{i+q} \circ h_{n+1}^{-1}|_{V} \circ \hat{\zeta}_{\chi'} \circ \hat{\zeta}_\chi^{-1} \circ (h_{n+1}^{-1}|_{V})^{-1} \circ \hat{R}_\infty^{-i}
\end{eqnarray*}
Homeomorphisms $\psi_{n}\circ \hat{R}_\infty^{i}$ are known (and fixed) in step $n$, $h_{n+1}^{-1}|_{V}$ is defined as the first element of the inductive construction in step $n+1$, however homeomorphisms $\hat{\zeta}_{\chi}$ can be defined recursively, depending on (linear) order of connected components of $\E_{(n+1)}^n$
within components of $\E_{(n+1)}^{n+1}$, and such order is well defined by the fact that $\mathcal G(\E_{(n+1)}^{n+1})$ has no cycle. In particular, the form of $\zeta_{\chi'}$
can be decided after $\zeta_{\chi}$ was chosen. To see this, observe that

\begin{eqnarray*}
 (h_{n+1}^{-1}|_{V})^{-1}\circ \hat{R}_\infty^{-i-q} \circ \psi_{n}\circ G_n^q(x,c)&=& \hat{\zeta}_{\chi'} \circ \hat{\zeta}_\chi^{-1} \circ (h_{n+1}^{-1}|_{V})^{-1} \circ \hat{R}_\infty^{-i}
\circ\psi_{n}(x,c)
\end{eqnarray*}
so we have to specify the image of $\hat{\zeta}_{\chi'}$ between Cantor sets $\hat{\zeta}_\chi^{-1} \circ (h_{n+1}^{-1}|_{V})^{-1} \circ \hat{R}_\infty^{-i}
\circ\psi_{n}(K_X\times C)$ and $h_{n+1}|_{V}\circ \hat{R}_\infty^{-i-q} \circ \psi_{n}\circ G_n^q(K_X\times C)$ provided by the above formula, and then extend it to the set $V$, while the construction is done at the level of the cube $V_m$ for $\zeta_{\chi}$ and $\zeta_{\chi'}$. Note that the acyclic structure of the graph ensures that for the components $\chi$ and $\chi'$ the iteration $q$ is uniquely determined.
This way we ensure that also $\mathbf{B_6}(b)$ holds.

The conditions $\mathbf{C_1}-\mathbf{C_6}$ are shown similarly as in the proof of Proposition 8.4. from \cite{Cro}, the only difference is that we are working on levels $S_{k_n}$ but arguments remain valid also in this setting.

Note that $\E_{(n+1)}^{n+1}$ is a subset of $\rs(\mathcal{D}_{n+1})$ so if $H_{n+1}$ is identity outside $\rs(\E_{(n+1)}^{n+1})$ then its composition with $\psi_n$
will not violate condition $\mathbf{B_7}$ and other conditions proven under the assumption that $\mathbf{C_1}$ holds.  Additionally observe that conditions $\mathbf{C}_7$, $\mathbf{C}_8$ are on sets $P^{(n)}_i$ which are cylinders over cubes in $S_n$.
At the moment, we have not defined sets in $\E_{(n+2)}^0$ which are necessary ingredient in conditions $\mathbf{C}_7$, $\mathbf{C}_8$.
However, we already know how they will be constructed. Namely, they will come from neighborhoods of the set
$Z_{k_{n+1}}$ in condition \eqref{con:paZ} together with the deformation as in \eqref{CHI}, and this does not involve the definition of $\Psi_{n+1}$. We also have sets $O_{n+1}\subset \mathbb{D}_{\infty}$ and so the sets $O'_{n+1}=h_{n+1}\circ \Psi_n(O_{n+1})$
are known. Therefore, to repeat the argument in \cite{Cro} we do the following. We fix small neighborhood $U_1$
of set $K_X\times C$, where $X\in \E_{(n+1)}^{0}$ is a properly chosen element (see the discussion before Lemma~9.6 in  \cite{Cro}).
But $\pi_{k_{n+1}}(h_{n+1}\circ \Psi_n(K_X\times C))\subset X\cap A_{k_{n+1}}$  so we can choose as $W_1$
the pullback by $h_{n+1}\circ \Psi_n$ of a sufficiently small neighborhood of $A_{k_{n+1}}$, which enables us to apply Lemma~\ref{lem:diff}.\\
Therefore, with respect to these cubes obtained by the pullback of $h_{n+1}\circ \Psi_n$, we can repeat construction from Section~9 of \cite{Cro}, and then extend obtained homeomorphism of $S_{k_{n+1}}\to S_{k_{n+1}}$ to 
a homeomorphism of $\spaceX$. We leave the details to the reader as only slight changes comparing to \cite{Cro} are needed.
On the other hand the whole proof in \cite{Cro} is long and involved, therefore we decided not to reiterate it here.
 The main difference is that instead of one sequence $P_i$ as in \cite{Cro}, we have different sequence $P_i^{(n)}$ for each level $S_{k_{n}}$ of our construction. So formally, in each step we work with different family of bins. Still they are cubes on respective levels, so that arguments from \cite[pp. 295-296]{Cro} can be repeated.
\end{proof}

\subsection{Crucial properties.}\label{Q_is_pseudo}

Let $\Mm_n$ be a sequence of chains such that $\rs(\Mm_{n+1})\subset \rs(\Mm_n)$, $\Ps_\infty=\cap_n \rs(\Mm_n)$ and $\supp h_n\subset \rs(\Mm_n)$. 
Let $\psi$ denote the semi-conjugacy map $\psi\colon \spaceX\to\spaceX$ obtained by the construction  in Lemma~\ref{lem:main}
and denote $\tilde{\mathbb{P}}:=\psi^{-1}(\Ps_\infty)$.

\begin{lem}\label{lem:arc-like} 
The continuum $\tilde{\mathbb{P}}$ is a pseudo-arc.
\end{lem}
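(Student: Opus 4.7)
The plan is to verify that $\tilde{\mathbb{P}}$ is a hereditarily indecomposable arc-like continuum and then to invoke Bing's classical characterization of the pseudo-arc. Since $\psi$ is monotone by Lemma~\ref{lem:fibres} and $\Ps_\infty$ is itself a continuum (in fact a pseudo-arc by Claim~\ref{claim4}), the preimage $\tilde{\mathbb{P}} = \psi^{-1}(\Ps_\infty)$ is a continuum, being a compact connected subset of $\spaceX$.

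For arc-likeness, I would work with the chains $\mathcal{D}_n := \psi_n^{-1}(\Mm_n)$. These are chains in $\spaceX$ because each $\psi_n$ is a homeomorphism and each $\Mm_n$ is a chain, and by $\mathbf{B_7}(a)$ their mesh is less than $2^{-n}$. The key sublemma to prove is that $\tilde{\mathbb{P}} \subset \psi_n^{-1}(\rs(\Mm_n))$ for every $n$. Granted this inclusion, the restricted family $\{\psi_n^{-1}(M)\cap \tilde{\mathbb{P}} : M\in \Mm_n\}$ is a chain cover of $\tilde{\mathbb{P}}$ of mesh less than $2^{-n}$, and by the standard characterization of chainable continua as those admitting $\eps$-chain covers for every $\eps>0$, the continuum $\tilde{\mathbb{P}}$ is arc-like.

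For hereditary indecomposability, I would invoke the crookedness clause of $\mathbf{B_7}(b)$, namely that $\mathcal{D}_n$ is crooked inside $\mathcal{D}_{n-1}$. This is exactly the crookedness property which, by Bing's classical criterion for chainable continua (the chain-cover counterpart of Lemma~\ref{lem:Brown}), guarantees hereditary indecomposability of any continuum admitting a nested sequence of such crooked chain covers. Combined with arc-likeness, Bing's characterization of the pseudo-arc as the unique hereditarily indecomposable arc-like continuum then identifies $\tilde{\mathbb{P}}$ with the pseudo-arc.

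The main obstacle will be establishing the sublemma $\tilde{\mathbb{P}} \subset \psi_n^{-1}(\rs(\Mm_n))$, which requires reconciling the limit map $\psi$ (whose preimage of $\Ps_\infty$ defines $\tilde{\mathbb{P}}$) with the approximants $\psi_n$ on which the chain data is prescribed. The argument I would give combines the uniform convergence $\psi_m \to \psi$ from Lemma~\ref{lem:hnconverge} with the nested inclusions $\psi_m^{-1}(\overline{\rs(\Mm_m)}) \subset \psi_n^{-1}(\rs(\Mm_n))$ from $\mathbf{B_7}(b)$: for $x\in \tilde{\mathbb{P}}$ the image $\psi(x)\in \Ps_\infty$ lies in the interior of each realization $\rs(\Mm_m)$ (viewed as an open neighborhood of $\Ps_\infty$ in $\spaceX$), so uniform convergence places $\psi_m(x)\in \rs(\Mm_m)$ for all sufficiently large $m$, and the nesting from $\mathbf{B_7}(b)$ then propagates this back to give $\psi_n(x)\in \rs(\Mm_n)$ for every $n\leq m$.
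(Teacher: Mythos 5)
Your overall strategy — produce nested crooked chain covers of $\tilde{\mathbb{P}}$ with mesh tending to $0$ and invoke Bing's characterization — is sound and close in spirit to what the paper does. However, your argument for the key sublemma $\tilde{\mathbb{P}}\subset\psi_n^{-1}(\rs(\Mm_n))$ contains a genuine gap. You assert that because $\psi(x)\in\Ps_\infty\subset\Int\rs(\Mm_m)$ for every $m$, uniform convergence of $\psi_m$ to $\psi$ forces $\psi_m(x)\in\rs(\Mm_m)$ for large $m$. This does not follow: uniform convergence gives, for each \emph{fixed} $n$, an index $m_0(n)$ such that $\psi_m(x)\in\rs(\Mm_n)$ for all $m\geq m_0(n)$, but the set you need to land in, $\rs(\Mm_m)$, is itself shrinking toward $\Ps_\infty$ as $m\to\infty$. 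Making $m\geq m_0(m)$ is a diagonal requirement that nothing in $\mathbf{B_{1,2,3}}$ or $\mathbf{B_7}$ guarantees; the rate at which $\psi_m\to\psi$ could easily be slower than the rate at which $\rs(\Mm_m)$ contracts, and in that case your conclusion fails.

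The paper closes exactly this gap with a structural, not metric, observation. Since $\psi_{n+1}=H_{n+1}\circ h_{n+1}\circ\psi_n$ and, by $\mathbf{B_1}$, $\mathbf{C_1}$ and $\mathbf{B_7}(c)$, the supports of $h_{n+1}$ and $H_{n+1}$ are contained in $\rs(\Mm_n)$, any point with $\psi_n(y)\notin\rs(\Mm_n)$ satisfies $\psi_{n+1}(y)=\psi_n(y)$; combined with the nesting $\rs(\Mm_{n+1})\subset\rs(\Mm_n)$ this propagates to $\psi_m(y)=\psi_n(y)$ for all $m\geq n$, hence $\psi(y)=\psi_n(y)\notin\rs(\Mm_n)\supset\Ps_\infty$. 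The contrapositive gives precisely your sublemma. This ``once outside, frozen forever'' mechanism is the essential ingredient you are missing; uniform convergence alone cannot substitute for it. With that replacement, the rest of your argument (chain covers of vanishing mesh from $\mathbf{B_7}(a)$, hereditary indecomposability from the crookedness clause of $\mathbf{B_7}(b)$, and Bing's uniqueness theorem) is correct and matches the paper's conclusion — the paper simply phrases the same thing as the identity $\tilde{\mathbb{P}}=\bigcap_n\psi_n^{-1}(\rs(\Mm_n))$, proving both inclusions and reading off the pseudo-arc structure directly from the nested crooked chains.
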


\begin{proof}
Recall that by $\mathbf{B}_7$ there is a nested sequence of chains $\{\Mm_{n}\}_{n=0}^\infty$, such that $\bigcap_{n=0}^\infty \rs(\Mm_n)=\Ps_\infty$,
\begin{align}\label{B0}
\mesh \big( \psi_{n}^{-1} (\Mm_{n}) \big) & < 2^{-n}
\end{align}
and 
\begin{equation}\label{B9}
\psi_{n}^{-1} (\Mm_{n})\textrm{ is crooked inside }\psi_{n-1}^{-1} (\Mm_{n-1}).
\end{equation}
Set
\begin{equation}
\mathbb{P}'=\bigcap_{n=0}^{\infty} \psi_n^{-1}(\rs ( \Mm_{n})).
\end{equation}
We are going to show that
$\tilde{\mathbb{P}}=\mathbb{P}'$.
Recall that for every $k<n$ we have
\begin{equation}\label{B8}
\psi_n^{-1}( \overline { \rs ( \Mm_{n}) })
\subset   \psi_k^{-1}( \rs ( \Mm_{k}))
\end{equation}
so $\mathbb{P}'$ is a continuum and by \eqref{B0} and \eqref{B9} $\mathbb{P}'$ is a pseudo-arc.

Choose any $x \in \mathbb{P}'$. 
By the choice, 
$\psi_n (x) \in \rs (\Mm_{n})$ for all  $n\geq 0$. 
Since $\{\rs (\Mm_{n})\}_{n=0}^\infty$ is a nested sequence,
it follows that 
$\psi(x) \in \rs ( \Mm_n)$ for each $n\geq 0$. Thus, 
\begin{equation}
\psi(x) \in \bigcap_{n=0}^{\infty} \rs \big( \Mm_n \big)=\Ps_\infty. 
\end{equation}
Then $x \in \psi^{-1}\circ \psi (x) \subset \psi^{-1}(\Ps_\infty)=\tilde{\mathbb{P}}$ 
and so $\mathbb{P}' \subseteq \tilde{\mathbb{P}}$.

Next, by contradiction, suppose that $\mathbb{P}'$ is a proper subcontinuum of $\tilde{\mathbb{P}}$. Choose $y\in \tilde{\mathbb{P}}\setminus \mathbb{P}'$. Then there exists an $n$ such that 
\begin{equation}\label{belong}
y\notin\psi_n^{-1}( \rs ( \Mm_{n}))
\end{equation} 
Then $\psi_n(y)\notin  \rs ( \Mm_{n})$ and so $\psi_{n+1}(y)\notin  \rs ( \Mm_{n+1})$. But $h_{n+1}$ is identity outside of $\rs ( \Mm_{n})$ by $\mathbf{B}_7(c)$ and  $\mathbf{B}_1$, so
$\psi_{n+1}(y)=h_{n+1}(\psi_n(y))=\psi_n(y)$. Therefore $\psi_n(y)=\psi_m(y)$ for each $m>n$. It follows that $\psi(y)=\psi_n(y)$ and by (\ref{belong}) we get that $\psi(y)=\psi_n(y)\notin \Ps_\infty$. This contradiction shows that $\mathbb{P}'=\tilde{\mathbb{P}}$. 
\end{proof}

Recall that we modified conditions $\mathbf{C_{7,8}}$ compared to respective conditions in \cite{Cro}. We have to prove that this change does not influence statements about invariant measures in the extension.

Let $G$ be the map $\psi\colon \spaceX\to\spaceX$ obtained as limit of maps $G_n$ in virtue of Lemma~\ref{lem:main}.
Recall that the Cantor set $K$ was provided by Lemma~\ref{lem:choiceofKinA} and definition of $C$ together with definitions and identifications of $C$ and $K\times C$ are provided just after the statement of the lemma.
Let $\chi\colon K\times C \to K\times C$ be the first return map induced by $G$ and some ergodic invariant measure $\nu$ of $G$
such that $\nu(K\times C)>0$. By {\black the} Poincar\'e recurrence theorem {\black \cite[Theorem 1.4]{Walters}} $\chi$
is defined on a set $\tilde{K}\subset K\times C$ such that
$\nu(\tilde{K})=\nu(K\times C)$ and $\chi(\tilde{K})\subset \tilde{K}$. Therefore in what follows, we assume that $\chi$ is defined on a set $\tilde{K}$
containing all points that return infinitely many times to $K\times C$.
This way $\chi$ is defined $\nu$-almost everywhere on $K\times C$.

\begin{lem}\label{lem:omegalimitinKC}
	Assume hypotheses $\mathbf{C_{1,2,5,6,7,8}}$ are satisfied. If $x\in K\times C$ and $y=\chi(x)$ then $x\in \bigcap_{n}P^{(n)}_{i_n}$ and there are indices $j_n\geq i_n$, with $j_n>i_n$ for some $n$, such that
	$y\in \bigcap_{n}P^{(n)}_{j_n}$.
\end{lem}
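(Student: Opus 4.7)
The plan is to trace the orbit $x, G(x), \ldots, G^q(x)=y$, where $q\geq 1$ is the first-return time of $x$ to $K\times C$, and to extract the bin-index monotonicity directly from $\mathbf{C_8}$. To each point of $K\times C$ one can associate a canonical sequence of indices $(i_n)$: condition (d) gives $d_H(P^{(n)}, K\times C)\to 0$, so the bins accumulate on $x$, and combined with the nesting in (b) and the order-compatibility in (c), there is a unique nested sequence $\cdots\subset P^{(n+1)}_{i_{n+1}}\subset P^{(n)}_{i_n}\subset\cdots$ determined by $x$ (with $x$ lying in the closure of the intersection); the same procedure applied to $y$ yields the indices $(j_n)$.

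First I would fix $n$ large enough that, by Lemma~\ref{lem:hnconverge}, $G_n$ agrees with $G$ on a neighborhood of the orbit $\{G^k(x)\}_{k=0}^q$, and so that this orbit registers non-trivially at the combinatorial level of $\mathcal{G}(\E_{(n)}^n)$. Then I would decompose the $q$-step orbit into maximal sub-paths in $\mathcal{G}(\E_{(n)}^n)$ joining elements of $\E_{(n)}^0$. Applying $\mathbf{C_8}$ to each such sub-path sends any point of $P^{(n)}_i$ into $\bigcup_{i'>i}P^{(n)}_{i'}$; composing across all sub-paths we obtain a strict increase of the bin index, i.e., $j_n>i_n$ for this particular $n$.

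The weak inequality $j_n\geq i_n$ at every other level follows from the nesting across levels: by (b) and (c), a strict increase at a high level propagates downward as a non-strict increase at lower levels, while at levels where the first-return orbit is too coarse to cross bin boundaries the indices are simply unchanged. The main obstacle is to make sense of $x\in\bigcap_n P^{(n)}_{i_n}$ despite the disjointness of each individual $P^{(n)}_i$ from $K\times C$; I would resolve this by passing to closures, or equivalently by reading off $(i_n)$ directly from the inductive placement of bins in Lemma~\ref{lem:main}, which positions each $P^{(n)}_i$ in a cube-neighborhood encoding where $x$ sits under $\psi_{n+1}$.
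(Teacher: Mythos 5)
Your plan---track the $q$-step $G$-orbit from $x$ to $y$, decompose it into sub-paths in $\mathcal{G}(\E_{(n)}^n)$, apply $\mathbf{C_8}$ to push the bin index strictly up, and use the nesting conditions (b), (c) to coordinate levels---is the same approach the paper takes; the paper formulates it tersely by deferring to Lemma~9.3 of \cite{Cro} and adding the $\mathbf{C_8}$ step on top.

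However, there are two genuine gaps. First, the ``obstacle'' you flag is real and the fix you propose does not address it. The bins $P^{(n)}_i$ are, by construction, disjoint from $K\times C$, so for $x\in K\times C$ the hypothesis of $\mathbf{C_8}$ (actual membership in some $P^{(n)}_i$) is never satisfied, and passing to closures or to ``the inductive placement of bins'' does not change that: $\mathbf{C_8}$ says nothing about points in $\overline{P^{(n)}_i}\setminus P^{(n)}_i$. Note the paper's own proof is stated conditionally (``if $x\in P^{(n)}_{i_n}$ then there is $j_n>i_n$ \ldots''); the statement is sensible only when $\chi$ is read as the first-return map of $G$ to $\psi^{-1}(K)$ and $x\in\psi^{-1}(K)\setminus(K\times C)$ is a point that honestly lies in a bin at each level---this is how the lemma feeds into Lemma~\ref{lem:omegalimitinKCcomplete} and Corollary~\ref{cor:9.4}. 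The right resolution is not to force indices onto $K\times C$ via limits, but to observe that the lemma's content concerns points strictly outside $K\times C$.

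Second, $\mathbf{C_8}$ is a statement about $G_n$, not $G$, and you say you would ``fix $n$ large enough that, by Lemma~\ref{lem:hnconverge}, $G_n$ agrees with $G$ on a neighborhood of the orbit.'' Lemma~\ref{lem:hnconverge} gives only uniform convergence $G_n\to G$; it does not give coincidence on any open set, and in general the two never agree identically. The actual transition from $G_n$ to $G$ (the content of Lemma~9.3 of \cite{Cro} that the paper invokes) rests on the support and commutation conditions $\mathbf{B_1}$, $\mathbf{B_2}$, $\mathbf{C_1}$, $\mathbf{C_2}$: for $m>n$ the later perturbations $h_m,H_m$ are supported inside $\rs(\E_{(m)}^{m-1})$ and commute with $\mainmap$ along edges of the relevant graphs, so iterating $G$ rather than $G_n$ over the $q$ relevant steps does not move a point out of $\bigcup_{i'>i}P^{(n)}_{i'}$ once $G_n$ has placed it there. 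This combinatorial propagation cannot be replaced by a soft approximation step.
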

\begin{proof}
Let $q$ be such that $G^q(x)=y$. Now all works analogously as in Lemma 9.3 in \cite{Cro} with the only difference that 
first, using $\mathbf{C_{8}}$ we show for each $n$ that if $x\in P^{(n)}_{i_n}$ then there is $j_n>i_n$ such that $y\in P^{(n)}_{j_n}$.
Then the result follows by the fact that for each $n,i$ there is $j$ such that $P^{(n)}_{i}\subset P^{(n-1)}_{j}$
and $j$ is uniquely defined.
\end{proof}

\begin{lem}\label{lem:omegalimitinKCcomplete}
Assume hypotheses $\mathbf{C_{1,2,5,6,7,8}}$ are satisfied and $x\in K\times C$ is such that $G^n(x)\in \psi^{-1}(K)$ for infinitely many $n$. Then $\omega_{\chi}(x)\subset K\times C$.
\end{lem}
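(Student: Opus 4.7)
The plan is to follow the strategy of the analogous result in \cite{Cro} (Lemma~9.4), adapted to our inverse-limit setting. Three ingredients drive the argument: the previous Lemma~\ref{lem:omegalimitinKC}, the nesting and accumulation properties (b) and (d) of the waste-bin sequence $\{P_i^{(n)}\}$, and the hypothesis that $G^n(x)\in\psi^{-1}(K)$ for infinitely many $n$. It is essentially a ``waste-collection'' argument: each time the orbit tries to leave $K\times C$ into the dirty part of $\psi^{-1}(K)$, condition $\mathbf{C_8}$ pushes it into a bin of strictly higher index, and by (d) bins of high index cluster on $K\times C$.

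First I would verify that $\chi^k(x)$ is defined for every $k\in\N$, i.e.\ that the $G$-orbit of $x$ returns to $K\times C$ infinitely often. This is the step where the hypothesis is used. Combining $\psi\circ G=\mainmap\circ\psi$ with $\mathbf{C_7}$--$\mathbf{C_8}$, each visit $G^n(x)\in\psi^{-1}(K)$ either lies in $K\times C$ or in some bin $P_{i}^{(n)}$; the bin index is monotonically pushed upward along the orbit, so the orbit cannot remain forever in $\psi^{-1}(K)\setminus (K\times C)$ while continuing to visit $\psi^{-1}(K)$ infinitely often without also returning to $K\times C$ infinitely often.

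Next, I iterate Lemma~\ref{lem:omegalimitinKC}: setting $y_k=\chi^k(x)$, one obtains for each $k$ a sequence of indices $(i_n^{(k)})_{n}$ such that $y_k$ lies in $\bigcap_n\overline{P_{i_n^{(k)}}^{(n)}}$, with $i_n^{(k+1)}\geq i_n^{(k)}$ for all $n$ and strict inequality at some $n$. Using the level-nesting condition (b), this strict growth at one level cascades, forcing $i_n^{(k)}\to\infty$ as $k\to\infty$ for every fixed $n$. Now fix $\eps>0$ and choose $n$ so large that $d_H(P^{(n)},K\times C)<\eps$, which is possible by (d). Since $i_n^{(k)}\to\infty$, for all sufficiently large $k$ the point $y_k$ lies within distance $\eps$ of $P^{(n)}$, hence within distance $2\eps$ of $K\times C$. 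As $\eps>0$ was arbitrary, every accumulation point of $\{y_k\}$ lies in $K\times C$, proving $\omega_\chi(x)\subset K\times C$.

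The main obstacle is Step~1, namely upgrading ``infinite visits to $\psi^{-1}(K)$'' to ``infinite returns to $K\times C$'': the hypothesis allows the orbit to explore the extra fiber points in $\psi^{-1}(K)\setminus(K\times C)$, and one must use the strict monotonicity built into $\mathbf{C_8}$ together with the accumulation in (d) to exclude the possibility that the orbit escapes to $\psi^{-1}(K)\setminus(K\times C)$ and stays there. Once Step~1 is in place, the remaining argument is a clean diagonal argument built on Lemma~\ref{lem:omegalimitinKC}.
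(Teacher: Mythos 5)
Your high-level plan --- iterate Lemma~\ref{lem:omegalimitinKC} so the bin indices grow without bound at every level, then invoke condition (d) to conclude --- is exactly the argument the paper gestures at by citing Proposition~9.2 of \cite{Cro}: the published proof says, in effect, that $\mathbf{C_8}$ yields $\omega_\chi(x)\subset P^{(n)}=\limsup_{i\to\infty}P_i^{(n)}$ for each $n$, and then $d_H(P^{(n)},K\times C)\to 0$ finishes it. So your Step~2 is the right argument and matches the paper.

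Your Step~1, however, aims at the wrong target and in fact clashes with your Step~2. You frame the ``main obstacle'' as upgrading ``infinitely many visits to $\psi^{-1}(K)$'' to ``infinitely many returns to $K\times C$,'' and you read $\chi$ as the first-return map \emph{to} $K\times C$. But under that reading the lemma is vacuous --- $\chi^k(x)\in K\times C$ for every $k$ by definition, so $\omega_\chi(x)\subset K\times C$ requires no argument at all, and your Step~2 has nothing to do. More importantly, for Step~2 to make sense, $\chi$ must be the first-return map to $\psi^{-1}(K)$: the bins $P_i^{(n)}$ are disjoint from $K\times C$ by construction, so the inclusion $y_k=\chi^k(x)\in\bigcap_n P^{(n)}_{i^{(k)}_n}$ that drives Step~2 is impossible whenever $y_k\in K\times C$. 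With $\chi$ the first return to $\psi^{-1}(K)$, the hypothesis ``$G^n(x)\in\psi^{-1}(K)$ for infinitely many $n$'' already makes $\chi^k(x)$ defined for all $k\geq 0$, so there is no ``upgrading'' left to do; and the nontrivial content is exactly your Step~2 (for those $k$ with $\chi^k(x)\notin K\times C$ the bin index increases, and (d) pushes those return points onto $K\times C$ in the limit). In short: Step~1 either begs the conclusion or is unnecessary depending on how you read $\chi$, and you should commit to the first-return-to-$\psi^{-1}(K)$ reading and drop Step~1 entirely, as the paper does.
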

\begin{proof}
The proof is the same as Proposition 9.2 in \cite{Cro} with the following modification. Working on coordinate spaces $S_n$, by $\mathbf{C_{8}}$ we obtain that $\omega_{\chi}(x)\subset P^{(n)}=\limsup_{i\to \infty} P_i^{(n)}$.
Then (\ref{eq:(d)})  gives $\omega_{\chi}(x)\subset K\times C$.
\end{proof}

With the above two lemmas proved we now prove a version of Corollary 9.4 in \cite{Cro} which is our final goal.

\begin{corollary}\label{cor:9.4}
Assume that hypotheses $\mathbf{C_{1,2,5,6,7,8}}$ are satisfied and let $\nu$ be a $G$-invariant probability measure on $\tilde{\mathbb{P}}$
which is
an extension of the Haar measure $\mu$ on $C$. Then 
$$\nu \big( \bigcup_{j\in \Z} G^j(\psi^{-1}(K)\setminus (K\times C)\big)=0.$$
In particular, $G$ is universally isomorphic to the disjoint union
$$\big( \bigcup_{j\in \Z} \mainmap^j(K)\times C, \mainmap \times\sigma \big)\bigsqcup \big(\tilde{\mathbb{P}}\setminus  \bigcup_{j\in \Z} \mainmap^j(K), \mainmap\big).$$
\end{corollary}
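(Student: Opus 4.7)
The strategy is to adapt the proof of \cite[Corollary~9.4]{Cro} to the present inverse-limit setting. The ``in particular'' universal-isomorphism clause is an automatic consequence of the first assertion: granted that $N:=\bigcup_{j\in\Z}G^j(\psi^{-1}(K)\setminus(K\times C))$ is $\nu$-null for every $G$-invariant Borel probability measure $\nu$, its complement $Z_0:=\tilde{\mathbb{P}}\setminus N$ is $G$-invariant and universally full; on $Z_0$ the semi-conjugacy $\psi$ is one-to-one off $\bigcup_j\mainmap^j(K)$ by Lemma~\ref{lem:fibres}(2), while above $\bigcup_j\mainmap^j(K)$ the set $Z_0\cap\psi^{-1}(\bigcup_j\mainmap^j(K))$ coincides with $\bigcup_j G^j(K\times C)$, on which the dynamics of $G$ is the product $\mainmap\times\sigma$ by passing to the limit of condition $\mathbf{C_6}$. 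Piecing these two pieces together produces the required bi-measurable conjugation $\theta$. By $G$-invariance and countable subadditivity $\nu(N)=0$ iff $\nu(\psi^{-1}(K)\setminus(K\times C))=0$, and by ergodic decomposition it suffices to handle ergodic $\nu$.

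Fix such an ergodic $\nu$ and split into two cases. If $\nu(K\times C)>0$, then a standard Poincar\'e-recurrence/Birkhoff argument applied to the first-return map $\chi$ of $G$ to $K\times C$, together with Lemma~\ref{lem:omegalimitinKCcomplete} (which guarantees $\omega_\chi(x)\subset K\times C$ for $\nu$-a.e. $x\in K\times C$), rules out any $\nu$-positive subset of $\psi^{-1}(K)\setminus (K\times C)$ that would otherwise appear along excursions between consecutive $\chi$-returns; the Kac formula then identifies $\nu(\tilde{\mathbb{P}})$ with the integrated return times and forces $\nu(\psi^{-1}(K)\setminus (K\times C))=0$.

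The remaining case is $\nu(K\times C)=0$ and $\nu(\psi^{-1}(K)\setminus(K\times C))>0$, which is ruled out by the waste-bin mechanism encoded in $\mathbf{C_{7,8}}$. Set $A_n:=\bigsqcup_{i\in\N}P^{(n)}_i$. Because $\nu(K\times C)=0$ and the $O_n$ shrink onto $K\times C$, monotone convergence gives $\nu(O_n)\to 0$, so by $\mathbf{C_7}$ applied to each approximation $G_n$ and passed to the uniform limit $G=\lim G_n$ (Lemma~\ref{lem:hnconverge}) together with $G$-invariance of $\nu$, a positive $\nu$-mass of $\psi^{-1}(K)\setminus(K\times C)$ is carried into $A_n$ after finitely many iterates of $G$; hence $\nu(A_n)>0$ for every sufficiently large $n$. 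On the other hand, $\mathbf{C_8}$ asserts that if $x\in P^{(n)}_i$ returns to $A_n$ at time $q$ then $G^q(x)\in P^{(n)}_{i'}$ with $i'>i$. Since the $P^{(n)}_i$ are pairwise disjoint with $\nu(A_n)\leq 1$, only finitely many of them can carry positive $\nu$-mass, and Poincar\'e recurrence applied to the highest-indexed such $P^{(n)}_{i_0}$ contradicts the strict forward-shift of the index. Therefore $\nu(A_n)=0$ for every $n$, and \eqref{eq:(d)} then forces $\nu$ to be supported on $K\times C$, contradicting $\nu(K\times C)=0$.

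The main obstacle for this Corollary is not the above measure-theoretic argument, which is essentially identical to \cite[Cor.~9.4]{Cro}, but the prior work in Lemma~\ref{lem:main} of arranging $\mathbf{C_{7,8}}$ simultaneously with the pseudo-arc-producing condition $\mathbf{B_7}$ and the product-dynamics condition $\mathbf{C_6}$ throughout the inductive construction; once that scaffolding is in place the Corollary follows from Lemmas~\ref{lem:omegalimitinKC}--\ref{lem:omegalimitinKCcomplete} and elementary ergodic theory as outlined.
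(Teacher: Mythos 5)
Your proposal uses the same essential ingredients as the paper---Poincar\'e recurrence for the first-return map, Lemma~\ref{lem:omegalimitinKCcomplete}, the waste-bin conditions $\mathbf{C_{7,8}}$, and the map $\Theta$ built from $\psi$ off $\bigcup_j\mainmap^j(K)$ and from $\mathbf{C_6}$ on $\bigcup_j G^j(K\times C)$---so the route is fundamentally the one the paper takes. The main deviation is organizational: the paper proves the vanishing in a single step by noting that the restriction of $\nu$ to $\psi^{-1}(K)$ is invariant under the return map and then invokes Lemma~\ref{lem:omegalimitinKCcomplete} together with Poincar\'e recurrence, while you split into the two cases $\nu(K\times C)>0$ and $\nu(K\times C)=0$. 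This split is redundant: your Case~2 argument (points of $\psi^{-1}(K)\setminus(K\times C)$ leave $O_n$, fall into $\bigsqcup_i P^{(n)}_i$ by $\mathbf{C_7}$, and drift to strictly higher-indexed bins on each return by $\mathbf{C_8}$, contradicting recurrence) is exactly what Lemmas~\ref{lem:omegalimitinKC}--\ref{lem:omegalimitinKCcomplete} encapsulate and it works uniformly whether or not $\nu(K\times C)>0$, so Case~1 adds nothing. Moreover, your Case~1 as written leans on the Kac formula in a way that does not actually force $\nu(\psi^{-1}(K)\setminus(K\times C))=0$: Kac merely identifies $\nu(\tilde{\mathbb{P}})$ with integrated return times and is perfectly compatible with excursions through $\psi^{-1}(K)\setminus(K\times C)$ between returns; the substance is carried by the waste-bin drift, not by Kac. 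In short, collapse the two cases into the single waste-bin/recurrence argument and drop the Kac appeal, and your sketch coincides with the paper's proof.
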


\begin{proof}
First note that the restriction of $\nu$ to $\psi^{-1}(K)$ is $\chi$-invariant. Therefore, using Lemma~\ref{lem:omegalimitinKCcomplete} and Poincar\'e recurrence theorem we obtain that $\nu (\psi^{-1}(K)\setminus (K\times C))=0$, which shows the first part of the corollary.

To prove the second part, one defines a bi-measurable map 
$$\Theta: \big(\tilde{\mathbb{P}}\setminus  \bigcup_{j\in \Z} G^j(\psi^{-1}(K))\big) \bigsqcup  \big(\bigcup_{j\in \Z} G^j(K\times C)\big)\to \big(\tilde{\mathbb{P}}\setminus  \bigcup_{j\in \Z} \mainmap^j(\psi^{-1}(K))\big) \bigsqcup  \big(\bigcup_{j\in \Z} \mainmap^j(K)\times C\big)$$
given by $\psi$ on the set $\tilde{\mathbb{P}}\setminus  \cup_{j\in \Z} G^j(\psi^{-1}(K)))$ and by $\mathbf{C_6}$
on the set $\cup_{j\in \Z} G^j(K\times C))$ (cf. Proposition 8.2 in \cite{Cro}). By the first part of the present proof, for the set
$$\tilde{\mathbb{P}}_0:= \big(\psi^{-1}(\Lambda)\setminus  \bigcup_{j\in \Z} G^j(\psi^{-1}(K))\big)$$ it holds that $\nu(\tilde{\mathbb{P}}_0)=0$ for every invariant probability measure $\nu$. By Lemma~\ref{lem:hnconverge} and Lemma~\ref{lem:fibres} we have that $\psi$ is one-to-one on the set $(\tilde{\mathbb{P}}\setminus \Lambda)\cup (\Lambda\setminus  \cup_{j\in \Z} G^j(\psi^{-1}(K)))$, so $\Theta$ is one-to-one as well. Using Lemma~\ref{lem:hnconverge} and $\mathbf{C_6}$ 
on the set $\cup_{j\in \Z} G^j(K\times C))$
we see that the map $\Theta$ is a conjugacy which shows the second part of the corollary.
\end{proof}

\begin{proof}[Proof of Theorem~\ref{thm:main}]
Fix any $r\in [0,\infty]$. The case of $r=0,\infty$ is already known (see \cite{Mouron}), so we may assume that $r\in (0,\infty)$.
Next step can be done the same way as in \cite[p.255]{Cro} since $\mainmap|_\Lambda$
is {\em strictly ergodic} (i.e. minimal and uniquely ergodic) with zero topological entropy. Simply we take as $\sigma\colon C\to C$ a strictly ergodic model of Bernoulli system with entropy $r$ (e.g. by the application of celebrated Jewett-Krieger theorem {\black(see e.g. \cite{DGS})}).
Then we know that $\mainmap \times \sigma$ is uniquely ergodic, since zero entropy systems and K-systems are measure-theoretically disjoint (see \cite[p.255]{Cro} for more details; cf. \cite[Proposition~4.6]{Thouvenot}).
This ensures that if we apply the construction, then by Corollary~\ref{cor:9.4} any measure supported on $\big( \bigcup_{j\in \Z} \mainmap^j(K)\times C, \mainmap \times\sigma \big)$ is in fact the unique measure of $(\Lambda\times C, \mainmap \times \sigma)$.
Since this is the only invariant measure of $(\tilde{\mathbb{P}},G)$ not isomorphic to an invariant measure of $(\Ps_\infty,\mainmap)$, we obtain by the Variational Principle that
$\htop(G)=\htop(\sigma)=r$ completing the proof.
\end{proof}

\appendix

\section[abc]{ (by George Kozlowski${}^{5}$)}\label{sec:appendix}
\stepcounter{footnote}
\footnotetext{Department of Mathematics and Statistics, Auburn University, AL 36849, USA}

This section contains a proof of the following theorem, which undoubtedly already exists in folklore.  Details are included here for the convenience of those who may not be familiar with the history of the crucial results stated below on which the proof is based.
\begin{theorem}\label{thm:A1} 
Let $X$,$Y$ be two $d$-dimensional disks and $\alpha$ an orientation preserving homeomorphism from $X$ onto $Y$.  
For  $i=1,\dots, n$ let 
$B_i$ be pairwise disjoint locally flat $d$-dimensional  disks in $\Int(X)$, let $C_i$ be pairwise disjoint locally flat $d$-dimensional  disks in $\Int(Y)$, 
and let $\beta_i$ be orientation preserving homeomorphisms from $B_i$ onto $C_i$. 
Then there is a homeomorphism $\gamma$ from $X$ onto $Y$ which coincides with each $\beta_i$ on $B_i$ and with $\alpha$ on the boundary of $X$.
\end{theorem}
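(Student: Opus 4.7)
My approach proceeds by three successive reductions. First, precomposing with $\alpha^{-1}$, I may assume $X=Y$ and $\alpha=\mathrm{id}_X$: replace each pair $(C_i,\beta_i)$ by $(\alpha^{-1}(C_i),\alpha^{-1}\circ\beta_i)$, which remain pairwise disjoint locally flat $d$-disks in $\Int(X)$ together with orientation-preserving homeomorphisms; it then suffices to find $\gamma'\colon X\to X$ with $\gamma'|_{\partial X}=\mathrm{id}$ and $\gamma'|_{B_i}$ equal to the modified $\beta_i$, for then $\gamma:=\alpha\circ\gamma'$ works.

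Next, I would align sources with targets. Using the generalized Schoenflies theorem for locally flat $(d-1)$-spheres together with the topological annulus theorem, I claim there is a homeomorphism $\sigma\colon X\to X$ with $\sigma|_{\partial X}=\mathrm{id}$ and $\sigma(B_i)=C_i$ for every $i$. The single-disk case follows by noting that $X\setminus\Int(B)$ and $X\setminus\Int(C)$ are both annuli collared over $\partial X$, so one can build a homeomorphism between them that restricts to the identity on $\partial X$; this is then extended across $B$ and $C$ via Alexander's coning formula. The multi-disk case is handled by induction, moving one remaining pair at a time inside a small locally flat disk disjoint from the already-aligned pairs (such a disk exists because $\Int(X)$ minus finitely many disks is path-connected in dimension $d\ge 2$). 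Setting $\rho_i:=\beta_i\circ(\sigma|_{B_i})^{-1}\colon C_i\to C_i$, an orientation-preserving self-homeomorphism, the task reduces to producing $\tau\colon X\to X$ with $\tau|_{\partial X}=\mathrm{id}$ and $\tau|_{C_i}=\rho_i$; then $\gamma=\alpha\circ\tau\circ\sigma$.

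Third, I build $\tau$ by local extensions. Using the bicollar of each $\partial C_i$ (available because $C_i$ is locally flat), pick pairwise disjoint locally flat disks $C_i\subset\Int(C_i^+)\subset\Int(X)$ and parameterize each pair $(C_i^+,C_i)$ as the standard pair $(D^d,\tfrac{1}{2}D^d)$. Define $\tilde\rho_i=\rho_i$ on $\tfrac{1}{2}D^d$; on the shell $\tfrac{1}{2}\le r\le 1$ set
\[
\tilde\rho_i(r\theta)=r\cdot H_{2r-1}(\theta),
\]
where $H_s\colon S^{d-1}\to S^{d-1}$ is an isotopy from $H_0=\rho_i|_{\partial C_i}$ (identified with a self-homeomorphism of $S^{d-1}$ via radial projection) to $H_1=\mathrm{id}$. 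This matches $\rho_i$ continuously at $r=\tfrac{1}{2}$ and equals the identity at $r=1$, so $\tilde\rho_i$ extends by the identity to all of $X$. Since the $C_i^+$ are pairwise disjoint, the $\tilde\rho_i$ have disjoint supports and commute, and $\tau:=\tilde\rho_1\circ\cdots\circ\tilde\rho_n$ satisfies $\tau|_{\partial X}=\mathrm{id}$ and $\tau|_{C_i}=\rho_i$.

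The principal obstacle is the existence of the isotopy $H_s$, equivalently the path-connectedness of $\mathrm{Homeo}^+(S^{d-1})$. In the paper's intended use ($d=4$), this is the classical fact that every orientation-preserving homeomorphism of $S^3$ is topologically isotopic to the identity (due to Fisher, based on Moise's triangulation theorem); for general $d$ it follows from Alexander's trick combined with Kirby--Siebenmann. Granting this input, the remaining steps---the alignment $\sigma$ and the radial gluing---are routine applications of bicollars of locally flat spheres and explicit formulas.
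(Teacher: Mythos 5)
Your argument is correct in spirit and relies on the same deep topological input as the paper's proof (the Annulus Theorem and the path-connectedness of $\mathrm{Homeo}^+(S^{d-1})$), but the organization is genuinely different. The paper avoids any alignment step: it compresses all sources $\varphi(B_i)$ into an inner spherical shell $A[r,s]$ and all targets $\psi(C_j)$ into an outer shell $A[t,u]$, at which point finding pairwise disjoint boxing disks $E_i \supset \varphi(B_i)\cup\psi(C_i)$ is transparent, and the whole map is then assembled from the single-disk lemma applied inside each $E_i$. You instead do ``align, then twist'': find $\sigma$ carrying $B_i$ to $C_i$, then correct by supported twists $\tilde\rho_i$ on bicollars of each $C_i$. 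Your step-3 formula $\tilde\rho_i(r\theta)=r\cdot H_{2r-1}(\theta)$ is essentially the same isotopy-driven construction as the paper's Lemma, just packaged on a collar of $\partial C_i$ rather than on the annulus $D\setminus\Int(B)$.

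There is, however, a real gap in your alignment step. Your induction requires, at stage $k$, a locally flat $d$-disk $D\subset\Int(X)\setminus(C_1\cup\cdots\cup C_{k-1})$ whose interior contains both (the current image of) $B_k$ and $C_k$. You justify its existence by noting that $\Int(X)$ minus finitely many disks is path-connected for $d\ge2$, but path-connectedness only yields an arc joining the two disks; it does not immediately give a locally flat \emph{disk} engulfing $B_k\cup(\text{tube})\cup C_k$ inside the complement of the other $C_j$. Making that step rigorous requires a regular-neighborhood/engulfing argument for which some care is needed (and at $d=2$ one even has to worry about how the arc winds around the removed disks). This is exactly the subtlety the paper's radial-shell setup is engineered to avoid: once sources and targets occupy disjoint annular shells, no engulfing argument is necessary. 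The gap is fixable, but as written you've asserted the corridor disk rather than produced it.

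A minor but non-negligible point on attributions: your claim that the isotopy in step 3 needs only Fisher/Moise at $d=4$ is correct (it is a statement about $S^3$), but the Annulus Theorem that you invoke in step 2 for $d$-disks in $\mathbb{R}^4$ rests on Quinn's Stable Homeomorphism Theorem in dimension $4$, so Quinn is not avoided in the paper's intended application. Likewise, ``Alexander's trick combined with Kirby–Siebenmann'' is not the standard route to $\pi_0\,\mathrm{Homeo}^+(S^{d-1})=0$; the cleaner citation is the Stable Homeomorphism Theorem (Kirby for $d\ge5$), as in the paper's appendix.
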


References for the basic material used here are papers \cite{Brown-GST} and \cite{Brown-LFE} of Morton Brown.
It follows from results of those papers that a locally flat embedding of a disk is the  same as a tame embedding of that disk in the sense that the latter phrase (which e.g.~is found  in \cite{Fisher}) is used elsewhere in this paper.
His collaboration
   \cite{BrownGluck-SS} (summarized  in  \cite{BrownGluck-BAMS}) with Herman Gluck forged the link 
  which stimulated the interest in 
the deep results on which the theorem above depends.

The statements involve two definitions found in \cite{BrownGluck-SS} and also \cite{BrownGluck-BAMS}. Consider a homeomorphism $h$ of Euclidean space $R^d$ (or the unit sphere $S^d$ in $R^{d+1}$) onto itself.  
If there is a nonempty open set $U$ which $h$ maps identically, then $h$ is said to be \emph{somewhere the identity}. If $h$ is the composition of a finite number of homeomorphisms each of which is somewhere the identity, then $h$  is said to be \emph{stable}.
\begin{theorem}[\textbf{Stable Homeomorphism}]  Every orientation preserving homeomorphism of $R^d$ (or $S^d$) onto itself is stable.
\end{theorem}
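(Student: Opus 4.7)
The plan is to implement Kirby's torus trick, which reduces the Stable Homeomorphism Theorem in dimensions $d \geq 5$ to the vanishing of the topological/PL structure set of the torus; the dimensions $d \leq 3$ are handled by classical means and $d = 4$ is covered by Quinn's subsequent work.

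First I would perform the standard reductions. The sphere case follows from the Euclidean case via overlapping stereographic charts, so we focus on orientation-preserving homeomorphisms $h: R^d \to R^d$. Since stability is preserved under composition, and since any orientation preserving homeomorphism may be written (after conjugation by translations pushing a chosen ball off to infinity) as a product of two homeomorphisms each somewhere equal to the identity, it suffices to treat the case where $h$ agrees with the identity on a neighborhood of infinity. A further chopping argument using finite covers by coordinate balls reduces matters to the case where, in addition, $h$ is $C^{0}$-close to the identity on a fixed large ball.

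For $d=1,2$ the theorem is elementary: orientation-preserving homeomorphisms of the line or plane are isotopic to the identity through stable homeomorphisms. For $d=3$ one invokes Moise's triangulation theorem, by which every self-homeomorphism of $R^{3}$ is isotopic to a PL homeomorphism, and PL homeomorphisms are stable via an Alexander trick applied simplex by simplex. For $d=4$ I would cite Quinn's topological handle-straightening result, which is the deepest input in that dimension. The principal work is the case $d \geq 5$, which proceeds as follows. Starting from $h: R^{d} \to R^{d}$ equal to the identity outside a large ball and $C^{0}$-close to the identity on it, fix a standard immersion $\iota: T^{d} \setminus \{*\} \looparrowright R^{d}$ (which exists by Hirsch's theorem since the punctured torus is open and parallelizable). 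By the immersion covering theorem, provided $h$ is sufficiently close to the identity one can lift $h \circ \iota$ through $\iota$ to a homeomorphism $\tilde h$ of $T^{d} \setminus \{*\}$. The Brown collaring theorem, together with the control furnished by $C^{0}$-closeness to the identity, lets one extend $\tilde h$ across the puncture to a self-homeomorphism of $T^{d}$ that is homotopic to the identity. Now pull $\tilde h$ back along a finite cover $T^{d} \to T^{d}$ of sufficiently high degree: by the Hsiang--Shaneson--Wall classification of homotopy tori and the computation of $[T^{d}, \mathrm{Top}/\mathrm{PL}]$, this pullback is concordant, and then by concordance-implies-isotopy in dimensions $\geq 5$ actually isotopic, to a PL homeomorphism, hence stable. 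Descending through the covering map and back through $\iota$ transfers stability from $\tilde h$ to $h$.

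The main obstacle, and the genuine content of the theorem, is the surgery-theoretic step: namely the vanishing in the appropriate range of the topological/PL structure set of $T^{d}$, which is what promotes the topological homeomorphism $\tilde h$ of the torus to a map isotopic to a PL one. All the remaining ingredients --- the reductions, the immersion lift, the collaring extension, the use of finite covers, and the Alexander trick on PL maps --- are essentially bookkeeping that translates this deep piece of algebraic topology into the desired factorization statement for homeomorphisms of $R^{d}$.
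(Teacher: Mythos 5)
The paper does not actually prove this statement: it quotes the Stable Homeomorphism Theorem as a black box, attributing $d=2$ to classical work, $d=3$ to Moise, $d=4$ to Quinn, and $d\ge 5$ to Kirby (with Wall's input in dimension $5$), and then builds the Annulus and Isotopy Theorems on top of it. Your proposal is therefore not comparable to a proof in the paper; it is an outline of the external literature that the paper cites, and as such it is consistent with the paper's attributions and correctly identifies the torus trick plus the surgery-theoretic classification of homotopy tori as the real content in high dimensions.

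Two caveats on your sketch itself. First, the sentence asserting that ``any orientation preserving homeomorphism may be written \ldots as a product of two homeomorphisms each somewhere equal to the identity'' is circular as stated: a finite product of homeomorphisms that are somewhere the identity is precisely the definition of a stable homeomorphism, so this cannot be the first reduction. The correct elementary reduction is the lemma that a homeomorphism which agrees with a stable homeomorphism on some nonempty open set is itself stable; this is what lets you replace $h$ by a homeomorphism supported in a ball (and, after the chopping argument, one that is $C^0$-close to the identity there). Second, your high-dimensional step compresses several genuinely hard inputs (the lifting of $h\circ\iota$ through the immersion via a handle decomposition of the punctured torus, the extension over the puncture, the Hsiang--Shaneson--Wall classification, and concordance implies isotopy) into citations; that is acceptable for a survey-level argument, which is all the paper itself offers, but none of it is proved here.
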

In dimension 2 (i.e.~$d=2$) the result is said to be classical, and in dimension 3 it is attributed to Moise. 
In dimension 4 it is due to Quinn \cite{Quinn}. In dimensions 5 and greater it is due to Kirby \cite{Kirby}, although for dimension 5 Kirby uses a result of C.~T.~C.~Wall which appeared in \cite{Wall}.
A useful succinct discussion of these matters is found in Section 8 of \cite{Edwards}.

With the Stable Homeomorphism Theorem as a basis the crucial results then follow from \cite{BrownGluck-SS}. 
\begin{theorem}[\textbf{Annulus}] Let $f,g \colon S^{d-1} \to R^d$ be disjoint, locally flat imbeddings with $f(S^{d-1})$ inside the bounded component of $R^d - g(S^{d-1})$. Then
the closed region $A$ bounded by $f(S^{d-1})$ and $g(S^{d-1})$ is homeomorphic to $S^{d-1} \times [0,1]$.
\end{theorem}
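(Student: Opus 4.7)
The plan is to derive the Annulus Theorem from the Stable Homeomorphism Theorem, following the original strategy of Brown and Gluck \cite{BrownGluck-SS}. The two main technical ingredients are the locally flat Schoenflies theorem of Brown \cite{Brown-LFE}, which guarantees that every locally flat $(d-1)$-sphere in $R^d$ bounds a topological $d$-ball and admits a bicollar, and the Alexander trick, which shows that every self-homeomorphism of $B^d$ that is the identity on $\partial B^d$ is isotopic, rel boundary, to the identity.

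The first step is to reduce the Annulus Theorem to the following extension lemma: every orientation-preserving self-homeomorphism $h \colon S^{d-1} \to S^{d-1}$ extends to a self-homeomorphism of the standard annulus $S^{d-1} \times [0,1]$ that equals $h$ on $S^{d-1} \times \{0\}$ and the identity on $S^{d-1} \times \{1\}$. Granted this lemma, the Annulus Theorem follows: by Schoenflies applied to $\Sigma_1 := f(S^{d-1})$ and $\Sigma_2 := g(S^{d-1})$, one obtains homeomorphisms of pairs $\phi_1 \colon (B^d, S^{d-1}) \to (B_1, \Sigma_1)$ and $\phi_2 \colon (R \cdot B^d, S^{d-1}_R) \to (B_2, \Sigma_2)$, where $B_i$ is the ball bounded by $\Sigma_i$. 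Reparametrising via the bicollars provided by local flatness, one can arrange $\phi_1$ and $\phi_2|_{B_1}$ to differ on $S^{d-1}$ only by a self-homeomorphism $h$ of $S^{d-1}$; extending $h$ through the standard annulus by the lemma and gluing with $\phi_2$ on the outer collar then identifies $A$ with the standard annulus $S^{d-1}\times[0,1]$.

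The crux is the extension lemma. By the Stable Homeomorphism Theorem, $h$ factors as $h = h_k \circ \cdots \circ h_1$ where each $h_i$ restricts to the identity on some nonempty open set $U_i \subset S^{d-1}$. It suffices to extend each $h_i$ individually and then compose. For each $i$, choose a small open ball $U_i' \Subset U_i$ and set $V_i := S^{d-1} \setminus U_i'$, a closed $(d-1)$-disk on which $h_i$ acts by a self-homeomorphism that is the identity on $\partial V_i$. The slab $V_i \times [0,1]$ is a topological $d$-ball, and the Alexander trick produces a self-homeomorphism $\widetilde h_i$ of this slab equal to $h_i|_{V_i}$ on $V_i \times \{0\}$, the identity on $V_i \times \{1\}$, and the identity on $\partial V_i \times [0,1]$. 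Extending by the identity on $U_i' \times [0,1]$ gives a self-homeomorphism $\Phi_i$ of $S^{d-1} \times [0,1]$ that realises $h_i$ on the bottom sphere and the identity on the top sphere. The composition $\Phi := \Phi_k \circ \Phi_{k-1} \circ \cdots \circ \Phi_1$ is then the desired extension of $h$: at the top sphere each $\Phi_i$ is the identity, while at the bottom sphere the composition produces $h_k \circ \cdots \circ h_1 = h$.

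The principal obstacle is the invocation of the Stable Homeomorphism Theorem itself, whose proof in dimension $4$ is due to Quinn \cite{Quinn} and in dimensions $\geq 5$ to Kirby \cite{Kirby} (using the torus trick and a lemma of Wall \cite{Wall}), and is genuinely deep. A secondary technical subtlety arises in the reduction step, where one must match the boundary parameterisations from the two Schoenflies homeomorphisms; this is handled by exploiting the bicollared structure of each $\Sigma_i$ to isotope across a collar neighbourhood, yielding a single well-defined self-homeomorphism of $S^{d-1}$ to which the extension lemma is then applied.
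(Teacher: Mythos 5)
The paper itself does not prove the Annulus Theorem: it cites Brown--Gluck \cite{BrownGluck-SS} for the derivation from the Stable Homeomorphism Theorem, so your proposal is being measured against the classical argument rather than against anything in the text. Your extension lemma and its proof (factor $h$ into somewhere-the-identity pieces via stability and extend each through the slab by the Alexander trick) are correct. The fatal gap is in the reduction of the Annulus Theorem to that lemma. You assert that, after applying Schoenflies to $\Sigma_1=f(S^{d-1})$ and $\Sigma_2=g(S^{d-1})$ separately and ``reparametrising via the bicollars,'' the parametrizations $\phi_1$ of $B_1$ and $\phi_2$ of $B_2$ can be arranged to differ on $S^{d-1}$ only by a self-homeomorphism $h$ of $S^{d-1}$. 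This presupposes that $\phi_2^{-1}(\Sigma_1)$ is the standard round sphere $S^{d-1}\subset R\cdot B^d$. Schoenflies gives no control over this: $\phi_2^{-1}(\Sigma_1)$ is merely some locally flat sphere in the interior of the big ball, and straightening it by a homeomorphism of $R\cdot B^d$ that keeps the outer sphere $S^{d-1}_R$ standard is precisely the content of the Annulus Theorem. Bicollars only normalize $A$ near its two boundary spheres; they say nothing about the core of $A$ and cannot produce the required matching. The reduction is therefore circular.

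A decisive sanity check: your argument invokes the Stable Homeomorphism Theorem only for self-homeomorphisms of $S^{d-1}$, i.e.\ in dimension $d-1$, plus Schoenflies. If that sufficed, the annulus conjecture in dimension $4$ would follow from the stable homeomorphism theorem in dimension $3$ (known since Moise in the 1950s), whereas the $4$-dimensional annulus conjecture remained open until Quinn \cite{Quinn} --- which is exactly why Quinn is cited in this appendix. The Brown--Gluck proof instead applies stability in dimension $d$, to the ambient comparison homeomorphism $\phi_2^{-1}\circ\phi_1$ of $R^d$: one shows that the class of orientation-preserving homeomorphisms $g$ of $R^d$ for which the region between $g(S[1])$ and $S[2]$ is an annulus (whenever $g(D[1])\subset \Int (D[2])$) contains all somewhere-the-identity homeomorphisms and is closed under composition, hence by stability in dimension $d$ contains every orientation-preserving homeomorphism. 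To repair your proof you would need to replace the boundary-matching step by an argument of this type; the extension lemma alone cannot carry the $d$-dimensional content.
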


\begin{theorem}[\textbf{Isotopy}] Every orientation preserving homeomorphism of $R^d$ (or $S^{d}$) onto itself is isotopic to the identity.
\end{theorem}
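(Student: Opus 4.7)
The plan is to combine the Stable Homeomorphism Theorem (stated just above) with the classical Alexander trick. Since a composition of homeomorphisms isotopic to the identity is again isotopic to the identity --- if $H^{(i)}_t$ isotopes $\id$ to $h_i$, then $t \mapsto H^{(1)}_t \circ \cdots \circ H^{(k)}_t$ isotopes $\id$ to $h_1 \circ \cdots \circ h_k$ --- the Stable Homeomorphism Theorem reduces the statement to a single somewhere-the-identity homeomorphism. So it suffices to prove: if $h$ is the identity on some nonempty open set, then $h$ is isotopic to the identity.

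I would work first in $S^d$. Suppose $h|_U = \id$ for some nonempty open $U$. Pick an open round ball $B \subset U$; after conjugating by a suitable homeomorphism we may identify the complement $S^d \setminus B$ with the standard closed unit disk $D^d$ so that $\partial B$ goes to $\partial D^d$. Then $h|_{D^d}$ is a homeomorphism of $D^d$ fixing the boundary pointwise, and the Alexander trick
\begin{equation*}
H_s(x) = \begin{cases} s \cdot h(x/s), & |x| \leq s, \\ x, & s \leq |x| \leq 1, \end{cases}
\end{equation*}
with $H_0 = \id$ by continuity at the origin, supplies an isotopy from $H_1 = h|_{D^d}$ to $\id_{D^d}$ rel boundary. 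Extending this by the identity across $B$ yields an isotopy of $S^d$ from $h$ to $\id_{S^d}$.

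For $R^d$ one passes through $S^d$ via the one-point compactification: any homeomorphism of $R^d$ is automatically proper and hence extends to a homeomorphism of $S^d$ fixing $\infty$. Care is required to arrange that the resulting isotopy fixes $\infty$ throughout, which amounts to placing each Alexander ball $B$ so that it contains $\infty$. This can be done factor by factor after the stable decomposition, by first conjugating each somewhere-the-identity factor by a homeomorphism of $S^d$ carrying its open set of identity points to a neighborhood of $\infty$.

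The genuine obstacle is not in this chain of implications but in the Stable Homeomorphism Theorem itself --- a deep result in manifold topology (Kirby, Quinn, Wall) --- which is invoked here as a black box. Once granted, the remainder is a packaging of the Alexander trick together with standard bookkeeping about the support of the isotopies.
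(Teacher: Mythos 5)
The paper itself gives no proof of this statement: it simply records that, once the Stable Homeomorphism Theorem is available, the Isotopy Theorem is one of the ``crucial results'' that follow from \cite{BrownGluck-SS}. Your proposal supplies exactly the argument that stands behind that citation --- reduce via the Stable Homeomorphism Theorem to a single somewhere-the-identity factor and then invoke the Alexander trick --- and for $S^d$ you have it right: pick a ball $B$ whose closure lies in the identity set, identify $S^d\setminus \Int(B)$ with $D^d$, and the Alexander isotopy rel boundary, extended by the identity over $B$, isotopes $h$ to the identity. The observation that an isotopy from $\id$ to each factor composes coordinatewise to an isotopy from $\id$ to the product is also correct.

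The $R^d$ case, as you have written it, contains a genuine error. You propose to conjugate each somewhere-the-identity factor $h_i$ (identity on $U_i\subset R^d$) by a homeomorphism $\phi_i$ of $S^d$ carrying $U_i$ to a neighborhood of $\infty$. But conjugation does not commute with the factorization: if $g_i=\phi_i h_i\phi_i^{-1}$ then $g_1\cdots g_k\neq h_1\cdots h_k$, so the conjugated factors no longer decompose $h$. And if instead you build an isotopy of $g_i$ rel a ball at $\infty$ and conjugate that isotopy back to one for $h_i$, the ball it fixes is $\phi_i^{-1}(B_i)\subset U_i$, which lies in $R^d$ and not at $\infty$ --- so $\infty$ is not fixed during the motion. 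The underlying premise, that the isotopy fixes $\infty$ only when the Alexander ball $B$ contains $\infty$, is also not quite right: the Alexander isotopy $H_s(x)=s\,h(x/s)$ fixes the center of the complementary disk at every time, provided $h$ fixes that center. The correct repair is to use the $R^d$-version of the Stable Homeomorphism Theorem, so that each factor $h_i$ is a homeomorphism of $R^d$ and its extension $\hat h_i$ to $S^d$ fixes $\infty$; then choose the identification of $S^d\setminus \Int(B_i)$ with the unit disk so that $\infty$ maps to the origin. Since $\hat h_i$ fixes the origin, $H_s(0)=s\,\hat h_i(0)=0$ for all $s$, so the Alexander isotopy fixes $\infty$ throughout, restricts to an isotopy of $R^d$, and the rest of your argument goes through.
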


Throughout the remainder of this section  certain assumptions are in force: 
(1) all homeomorphisms  preserve orientation, (2) all embeddings and embedded objects are locally flat, and (3) everything occurs in some Euclidean space of fixed dimension with norm $| x |$ denoting the distance from the point $x$ to the origin. It will be convenient to work with disks $D[r]$, spheres $S[r]$, and annuli $A[r,s]$, where
\[ D[r]= \big  \{ x  \colon | x | \le r  \big \}, \ S[r]=  \big \{ x  \colon | x | = r  \big \}, A[r,s]= \big \{ x \colon r \le | x | \le s \big  \}.\]
A homeomorphic image of one of these objects will be referred to as a disk, sphere, or annulus as appropriate.  
The homeomorphisms $S[1] \approx S[r]$ and $S[1] \times [r,s] \approx A[r,s]$ defined  by $x \mapsto rx$ and $(x,t) \mapsto tx$ ($x \in S[1], r \le t \le s$) occur implicitly below. 
The boundary of the disk  $D$ is denoted $\partial D$.

\begin{lem} Let $B$, $C$, and $D$ be disks with $B \cup C \subset \Int (D)$ and suppose there is given a homeomorphism $\beta\colon B \approx C$.  Then there is a homeomorphism $\gamma \colon D \approx D$ such that 
\[ \gamma (x) = \begin{cases} x \text{ for all } x \in \partial D \text{ and}\\
\beta (x) \text{ for all } x \in B.
\end{cases} \]
\end{lem}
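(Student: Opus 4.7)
The plan is to reduce the lemma to two applications of the deep theorems cited immediately before it (the Annulus Theorem and the Isotopy Theorem). The strategy proceeds in two steps: first, extend $\beta$ from $B$ to a homeomorphism $\tilde{\beta}\colon D\to D$ without worrying about what it does on $\partial D$; second, post-compose with a homeomorphism supported in a collar of $\partial D$ to correct the boundary behaviour, using a collar disjoint from $B$ so that the value of $\tilde{\beta}$ on $B$ is preserved.

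For the extension step, I would work with the closed complementary annular regions $A_1=D\setminus\Int(B)$ and $A_2=D\setminus\Int(C)$. Each has boundary consisting of two disjoint locally flat $(d-1)$-spheres (namely $\partial B,\partial D$ and $\partial C,\partial D$ respectively), so the Annulus Theorem gives homeomorphisms $\phi_i\colon A_i\to S^{d-1}\times[0,1]$ sending the inner sphere to $S^{d-1}\times\{0\}$ and $\partial D$ to $S^{d-1}\times\{1\}$. The boundary homeomorphism $\beta|_{\partial B}$ transported through $\phi_1,\phi_2$ becomes a homeomorphism $g\colon S^{d-1}\to S^{d-1}$ of the inner sphere; pick any homeomorphism $h$ of $S^{d-1}$ on the outer sphere (for instance the one induced by $\phi_2\circ\phi_1^{-1}|_{\partial D}$). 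Since both $g$ and $h$ are orientation preserving, the Isotopy Theorem on $S^{d-1}$ provides an isotopy $G\colon S^{d-1}\times[0,1]\to S^{d-1}$ with $G(\cdot,0)=g$ and $G(\cdot,1)=h$, and then $F(x,t)=(G(x,t),t)$ is a homeomorphism of $S^{d-1}\times[0,1]$. Transporting back via $\phi_2^{-1}\circ F\circ\phi_1$ and gluing with $\beta$ on $B$ (the two definitions agree on $\partial B$ by construction) yields the desired extension $\tilde{\beta}\colon D\to D$.

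For the boundary correction step, let $\eta=\tilde{\beta}|_{\partial D}$. By the Isotopy Theorem applied to $\partial D\cong S^{d-1}$, there is an isotopy $H\colon\partial D\times[0,1]\to\partial D$ with $H(\cdot,0)=\eta^{-1}$ and $H(\cdot,1)=\mathrm{id}$. Since $B$ is a compact subset of $\Int(D)$, I can choose a collar $N\cong\partial D\times[0,1]$ of $\partial D$ in $D$ with $N\cap B=\emptyset$. Define $\Phi\colon D\to D$ by $\Phi(x,t)=(H(x,t),t)$ on $N$ and $\Phi=\mathrm{id}$ on $D\setminus N$; the two definitions agree on $\partial D\times\{1\}$, so $\Phi$ is a homeomorphism of $D$. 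Then $\gamma:=\tilde{\beta}\circ\Phi$ satisfies $\gamma|_{\partial D}=\eta\circ\eta^{-1}=\mathrm{id}$ and $\gamma|_B=\tilde{\beta}|_B=\beta$ because $\Phi$ is the identity on $B\subset D\setminus N$.

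The only genuinely nontrivial inputs are the Annulus and Isotopy Theorems; once these are in hand the proof is a routine gluing argument. The subtle point to keep in mind is that the Annulus Theorem is needed precisely because the definition of ``locally flat'' does not a priori guarantee that the complement of a locally flat disk in a disk is a product; this is exactly where the hypothesis that $B$ and $C$ (and hence their boundary spheres) be locally flat is used. All other steps — choosing the collar disjoint from $B$, interpolating isotopies, and verifying agreement on common boundaries — are straightforward.
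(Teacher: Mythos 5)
Your proof is correct and takes essentially the same approach as the paper's: both use the Annulus Theorem to identify $D\setminus\Int(B)$ and $D\setminus\Int(C)$ with standard annuli, and the Isotopy Theorem to interpolate between the induced boundary-sphere homeomorphisms. The only difference is that the paper fixes the outer-sphere map to be $\psi\circ\varphi^{-1}$ at the outset, so the resulting $\gamma$ is already the identity on $\partial D$ in one stroke, whereas you leave that choice free and append a collar-supported correction, which is harmless but becomes vacuous once you take your own suggested choice of $h$.
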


\begin{proof}  By the Annulus Theorem there are homeomorphisms $\varphi \colon   D - \Int (B) \approx A[1,2] $ and  $\psi \colon   D - \Int (C) \approx A[1,2] $, and these can be chosen so 
 that $\varphi (\partial D) = \psi (\partial D)=S[2]$.
From  the Isotopy Theorem the homeomorphism of $S[2]$ onto itself defined by $\psi  \circ \varphi^{-1}$, considered as a map of $S[1]$  onto itself,  and the
 homeomorphism of $S[1]$ onto itself defined by $\psi \circ \beta \circ \varphi^{-1}$ are isotopic to the identity map and  hence are isotopic.  Thus there is a 
 homeomorphism $\theta \colon A[1,2] \approx A[1,2]$ which is  $\psi \circ \beta \circ \varphi^{-1}$ on $S[1]$ and  $\psi  \circ \varphi^{-1}$ on $S[2]$. The desired homeomorphism is defined by
\[ \gamma (x) = \begin{cases} \big ( \psi^{-1} \circ \theta \circ \varphi  \big ) (x) \text{ for all } x \in D - \Int (B) \text{ and}\\
\beta (x) \text{ for all } x \in B,
\end{cases} \]
which completes the proof of the lemma.
\end{proof}

\begin{proof} [Proof of Theorem~\ref{thm:A1}]  
The indices $i$ and $j$ will satisfy $1 \le i, j \le n$.  Choose real numbers \[0<r<s<t<u<1.\]
Let $\varphi \colon X \approx D[2]$ and $\psi \colon Y \approx D[2]$ be homeomorphisms satisfying $\varphi (B_i) \subset A[r,s]$ and  $\psi (C_j) \subset A[ t,u]$  for all $i, j$, which can be achieved by radial homeomorphisms of $D[2]$.  One then finds disjoint disks $E_1, \dots, E_n$ in $\Int (D[1])$ such that $\varphi (B_i) \cup \psi (C_i) \subset \Int (E_i)$.  By the lemma there are homeomorphisms $\varepsilon_i \colon E_i \approx E_i$ such that  
\[ \varepsilon_i (x) = \begin{cases} x \text{ for all } x \in \partial E_i \text{ and}\\
 \big ( \psi \circ \beta_i  \circ \varphi^{-1}  \big ) (x) \text{ for all } x \in \varphi B_i.
\end{cases} \]
Define $\varepsilon \colon D[1] \approx D[1]$ by $\varepsilon (x) = \varepsilon_i (x)$ for all $x$ in $E_i$ and 
$\varepsilon (x) = x$  for all $x \in D[1] - \cup_{i} \,  E_i$.

The homeomorphism $\psi \circ \alpha \circ \varphi^{-1}$ defines a homeomorphism of $S[2]$ onto itself and hence is isotopic to the identity.  Thus there is a homeomorphism $\delta \colon A[1,2] \approx A[1,2]$ which agrees with this map on $S[2]$ and which is the identity on $S[1]$.
Define homeomorphisms (1) $\theta \colon D[2] \approx D[2]$  by extending $\delta$ over $D[1]$ by $\varepsilon$ and  (2) 
$\gamma \colon X \approx Y$ by $\gamma =   \psi^{-1} \circ \theta \circ \varphi$, which completes the proof of the theorem.
\end{proof}

\section*{Acknowledgements}
We are grateful to W. Lewis for his comments on \cite{LewisHomeoGroup} and
 \cite{LewisPeriodic}, and to X.-C. Liu who made us familiar with \cite{Cro}. We are also very grateful to H. Bruin, S. Crovisier, and L. Hoehn for helpful comments on the final version of the paper. 

J. Boro\'nski was supported in part by the National Science Centre, Poland (NCN), grant no. 2019/34/E/ST1/00237.
J. \v{C}in\v{c} was supported by the Austrian Science Fund (FWF) Schr\"odinger Fellowship stand-alone project J 4276-N35 and Slovenian Research Agency ARIS grant J1-4632.
 P. Oprocha was supported by
National Science Centre, Poland (NCN), grant no. 2019/35/B/ST1/02239.

\nocite{*}
\bibliographystyle{alpha}

\end{document}